\documentclass{amsart}[12pt]
\pdfoutput=1

\usepackage{tikz}
\usepackage[sc]{mathpazo}
\usepackage{hyperref}
\usepackage{microtype}
\usepackage{tikz}
\usetikzlibrary{snakes, arrows}
\usepackage{graphicx}
\usepackage{mathtools} 
\linespread{1.2}

\usepackage{xcolor}
\hypersetup{
    colorlinks,
    linkcolor={red!80!black},
    citecolor={blue!80!black},
    urlcolor={blue!80!black}
}

\theoremstyle{definition}

\newtheorem*{theorem*}{Theorem}
\newtheorem{dummy}{}[section]
\newtheorem{theorem}[dummy]{Theorem}
\newtheorem{lemma}[dummy]{Lemma}
\newtheorem{example}[dummy]{Example}
\newtheorem{corollary}[dummy]{Corollary}
\newtheorem{definition}[dummy]{Definition}
\newtheorem{remark}[dummy]{Remark}
\newtheorem{proposition}[dummy]{Proposition}

\newtheorem{conjecture}[dummy]{Conjecture}
\newtheorem*{FalseHope}{False Hope}

\newcommand{\Z}{\mathbb{Z}}
\newcommand{\C}{\mathbb{C}}
\newcommand{\N}{\mathbb{N}}
\newcommand{\R}{\mathbb{R}}
\newcommand{\proj}{\mathbb{P}}
\newcommand{\core}{\mathbf{core}}

\newcommand{\SC}{\mathbf{SC}}
\newcommand{\TD}{\mathbf{TD}}
\newcommand{\OC}{\mathcal{OC}}
\newcommand{\Cat}{\mathbf{Cat}}
\newcommand{\sk}{s\ell}
\newcommand{\vac}{\mathbf{vac}}
\newcommand{\inv}{\mathbf{inv}}

\newcommand{\INV}{\mathbf{INV}}
\newcommand{\DES}{\mathbf{DES}}
\newcommand{\des}{\mathbf{des}}
\newcommand{\maj}{\mathbf{maj}}
\newcommand{\siz}{\mathbf{siz}}
\newcommand{\sqin}{\mathbf{sqin}}
\newcommand{\LD}{\mathbf{LD}}
\newcommand{\VS}{\mathbf{VS}}
\newcommand{\cone}{\mathfrak{c}}
\newcommand{\polyh}{\mathfrak{c}}
\newcommand{\dominant}{\mathcal{D}}
\DeclareMathOperator{\Sym}{Sym}

\begin{document}
\title{Lattice points and simultaneous core partitions}
\begin{abstract}
We observe that for $a$ and $b$ relatively prime, the ``abacus construction'' identifies the set of simultaneous $(a,b)$-core partitions with lattice points in a rational simplex. Furthermore, many statistics on $(a,b)$-cores are piecewise polynomial functions on this simplex.
We apply these results to rational Catalan combinatorics. Using Ehrhart theory, we reprove Anderson's theorem \cite{anderson} that there are $(a+b-1)!/a!b!$ simultaneous $(a,b)$-cores, and using Euler-Maclaurin theory we prove Armstrong's conjecture \cite{AHJ} that the average size of an $(a,b)$-core is $(a+b+1)(a-1)(b-1)/24$. Our methods also give new derivations of analogous formulas for the number and average size of self-conjugate $(a,b)$-cores.
We conjecture a unimodality result for $q$ rational Catalan numbers, and make preliminary investigations in applying these methods to the $(q,t)$-symmetry and specialization conjectures. We prove these conjectures for low degree terms and when $a=3$, connecting them to the Catalan hyperplane arrangement and quadratic permutation statistics.
\end{abstract}
\author{Paul Johnson}
\address{University of Sheffield}
\email{paul.johnson@sheffield.ac.uk}

\maketitle
\section{Introduction}
This paper establishes lattice point geometry as a foundation for the study of simultaneous core partitions, and, more generally, rational Catalan numbers, which, by a theorem of Anderson, count simultaneous cores.

Rational Catalan numbers and their $q$ and $(q,t)$ analogs are a natural generalization of Catalan numbers.  Apart from their intrinsic combinatorial interest, they appear in the study of Hecke algebras \cite{GG}, affine Springer varieties \cite{LS}, and compactified Jacobians of singular curves \cite{EM1, EM2}.   

Lattice point geometry provides a unified approach to proving many known results about simultaneous core partitions, such as Anderson's result, and also lets us prove a conjecture of Armstrong about the average size of simultaneous core partitions.  Furthermore, lattice point techniques provide an avenue to attack the specialization and symmetry conjectures for $(q,t)$-rational Catalan numbers, the most important open questions in the field.  

Our first result is to reprove Anderson's theorem by identifying simultaneous core partitions with lattice points in a rational simplex.  The connection between rational Catalan numbers and this simplex is not new; it appears for instance in \cite{LS, GMV}.    However, we are not aware of any work using this connection to apply lattice point techniques.  After this identification is made, many results follow quite naturally.
  
\subsection{Background: Simultaneous cores and rational Catalan numbers}
A \emph{partition of $n$} is a nonincreasing sequence $\lambda_1\geq \lambda_2\geq
\lambda_{k}> 0$ of positive integers such that $\sum \lambda_i=n$. We
call $n$ the $\emph{size}$ of $\lambda$ and denote it by $|\lambda|$; we call
$k$ the \emph{length} of $\lambda$ and denote it by $\ell(\lambda)$.
\subsubsection{Hooks and Cores}
We frequently identify $\lambda$ with its Young diagram, in
English notation -- that is, we draw the parts of $\lambda$ as
the columns of a collection of boxes.
\begin{definition}
The \emph{arm} $a(\square)$ of a cell $\square$ is the number of cells contained in $\lambda$ and above $\square$, and the \emph{leg} $l(\square)$ of a cell is the number of cells contained in $\lambda$ and to the right of $\square$.
The \emph{hook length} $h(\square)$ of a cell is $a(\square)+l(\square)+1$.
\end{definition}
\begin{example}
The cell $(2,1)$ of $\lambda=3+2+2+1$ is marked $s$; the cells in the
leg and arm of $s$ are labeled $a$ and $l$, respectively.
\begin{center}
\begin{tikzpicture}[scale=.5]
\draw[thin, gray] (0,0) grid (1,3);
\draw[thin, gray] (1,0) grid (3,2);
\draw[thick] (0,0)--(0,3)--(1,3)--(1,2)--(3,2)--(3,1)--(4,1)--(4,0)--cycle;
\draw (1.5,.5) node{$s$};
\draw (1.5,1.5) node{$a$};
\draw (2.5,.5) node{$l$};
\draw (3.5,.5) node {$l$};
\draw (8.5,1.5) node[align=left] {$a(s)=\# a=1$ \\ $l(s)=\# l=2$ \\ $h(s)=4$};
\end{tikzpicture}
\end{center}
\end{example}
We now introduce our main object of study.
\begin{definition}
An $a$-\emph{core} is a partition that has no hook lengths of size $a$. An $(a,b)$-\emph{core} is a partition that is simultaneously an $a$-core and a $b$-core.  We use $\mathcal{C}_{a,b}$ to denote the set of $(a,b)$-cores.
\end{definition}
\begin{example}
We have labeled each cell $\square$ of $\lambda=3+2+2+1$ with its hook length $h(\square)$.
\begin{center}
\begin{tikzpicture}[scale=.5]
\draw[thin, gray] (0,0) grid (1,3);
\draw[thin, gray] (1,0) grid (3,2);
\draw[thick] (0,0)--(0,3)--(1,3)--(1,2)--(3,2)--(3,1)--(4,1)--(4,0)--cycle;
\draw (.5,.5) node{$6$};
\draw (1.5,.5) node{$4$};
\draw (2.5,.5) node{$3$};
\draw (3.5,.5) node{$1$};
\draw (.5,1.5) node {$4$};
\draw (1.5,1.5) node{$2$};
\draw (2.5,1.5) node{$1$};
\draw (.5,2.5) node{$1$};
\end{tikzpicture}
\end{center}
We see that $\lambda$ is \emph{not} an $a$-core for $a\in \{1,2,3,4,6\}$;
but it \emph{is} an $a$-core for all other $a$.
\end{example}
\subsubsection{Rational Catalan numbers}
Recall that the Catalan number $\Cat_n=\frac{1}{2n+1}\binom{2n+1}{n}$. Catalan numbers count hundreds of different combinatorial objects; for example, the number of lattice paths from $(0,n)$ to $(n+1,0)$ that stay strictly below the line connecting these two points.
Rational Catalan numbers are a natural two parameter generalization of $\Cat_n$.
\begin{definition}
For $a,b$ relatively prime, the \emph{rational Catalan number}, or \emph{$(a,b)$ Catalan number} $\Cat_{a,b}$ is
$$\Cat_{a,b}=\frac{1}{a+b}\binom{a+b}{a}$$
\end{definition}
The rational Catalan number $\Cat_{a,b}$ counts the number of lattice paths from $(0,a)$ to $(b,0)$ that stay beneath the line from $(0,a)$ to $(b,0)$. This is consistent with the specialization $\Cat_{n,n+1}=\Cat_n$.

\subsection{}
Simultaneous cores and rational Catalan numbers are connected by:
\begin{theorem}[Anderson \cite{anderson}] \label{thm:anderson}
If $a$ and $b$ are relatively prime, then $|\mathcal{C}_{a,b}|=\Cat_{a,b}$; that is, rational Catalan numbers count $(a,b)$-cores.
\end{theorem}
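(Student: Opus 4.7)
The plan follows the strategy flagged in the introduction: realize $\mathcal{C}_{a,b}$ as the set of lattice points in an explicit rational simplex via the abacus construction, then count those lattice points.

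First, for any partition $\lambda$, form its beta-set $\beta(\lambda) \subset \mathbb{Z}$: each hook length of $\lambda$ appears as a difference $\beta - j$ with $\beta \in \beta(\lambda)$ and $j \notin \beta(\lambda)$, $j < \beta$. Display $\beta(\lambda)$ on an abacus with $a$ runners indexed by residues modulo $a$. The $a$-core condition becomes the statement that each runner is flush --- equivalently, $\beta(\lambda)$ is closed under subtraction by $a$. Such configurations are parameterized by a charge vector $(n_0, \ldots, n_{a-1}) \in \mathbb{Z}^a$ subject to $\sum n_i = 0$, giving the classical identification of $a$-cores with a lattice of rank $a-1$.

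Next, I translate the $b$-core condition into linear inequalities on the charges. Since $\beta(\lambda)$ is already closed under subtraction by $a$, it suffices to require closure under subtraction by $b$. For an element $\beta$ on runner $i$, subtracting $b$ lands on runner $(i-b) \bmod a$, forcing a comparison between the cutoffs on the two runners. Because $\gcd(a,b) = 1$, the map $i \mapsto (i-b) \bmod a$ is a single $a$-cycle, and the resulting $a$ inequalities, together with $\sum n_i = 0$, should cut out a rational simplex $\polyh_{a,b}$ of dimension $a-1$ whose lattice points biject with $\mathcal{C}_{a,b}$.

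Finally, I count the lattice points of $\polyh_{a,b}$. One clean route is a direct bijection between these lattice points and lattice paths from $(0,a)$ to $(b,0)$ strictly below the diagonal, which are already known to be counted by $\Cat_{a,b}$. Alternatively, with the vertices of $\polyh_{a,b}$ described explicitly (having denominators dividing $a$), one applies Ehrhart theory: the simplex has an integer dilate $a \polyh_{a,b}$ whose Ehrhart polynomial can be computed, and $|\mathcal{C}_{a,b}|$ extracted via reciprocity or a generating-function identity. The main obstacle is Step 2: writing down the precise linear inequalities the $b$-core condition imposes on the charges and verifying that the resulting polytope is indeed an $(a-1)$-simplex (rather than a more general polytope) with manageable vertices. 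Once $\polyh_{a,b}$ is pinned down, the count in Step 3 should follow either from a combinatorial bijection with lattice paths or from a standard Ehrhart computation.
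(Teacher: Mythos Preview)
Your broad plan matches the paper's: signed abacus identifies $a$-cores with the lattice $\Lambda_a=\{\sum c_i=0\}$; the $b$-core condition becomes the $a$ inequalities $c_{i+b}-c_i\le q_a(b+i)$ (or, after the shift $x_i=c_i+s_i$, simply $x_{i+b}-x_i\le b/a$); and since $i\mapsto i+b$ is a single $a$-cycle these cut out an $(a-1)$-simplex $\SC_a(b)$. So your Steps 1--2 are correct and are handled by the paper in a few lines.

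Where your proposal diverges is Step 3, the count, and this is where the paper's actual contribution lies. You offer either a bijection with $(a,b)$-Dyck paths --- that is Anderson's original proof, not the paper's --- or a bare Ehrhart computation on $a\,\polyh_{a,b}$, which you leave unspecified. The paper does something sharper: a further linear change of variables $z_i=x_{ib+k}-x_{(i+1)b+k}+b/a$ identifies $\SC_a(b)$ with the simplex $\TD_a(b)$ of $b$-dimensional $\Z_a$-representations with trivial determinant, i.e.\ the points of the standard simplex $b\Delta_{a-1}$ lying in the index-$a$ sublattice $\sum i z_i\equiv 0\pmod a$. The cyclic action $z_k\mapsto z_{k+1}$ (tensoring by a character) preserves $b\Delta_{a-1}$ and, because $\gcd(a,b)=1$, permutes the $a$ cosets of this sublattice freely. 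Hence the $\binom{a+b-1}{a-1}$ lattice points of $b\Delta_{a-1}$ are equidistributed among the cosets, giving $|\mathcal{C}_{a,b}|=\frac{1}{a}\binom{a+b-1}{a-1}=\Cat_{a,b}$.

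So your diagnosis that ``the main obstacle is Step 2'' is off: the inequalities and the simplex shape fall out of the abacus almost immediately. The substantive idea you are missing is this symmetry (free-action) argument for the count; without it, your Ehrhart route would require an explicit vertex/volume computation that you have not carried out.
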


Our first result is a new proof of Theorem \ref{thm:anderson} using the geometry of lattice points in rational polyhedra.  This framework easily extends to prove other results c,hief among them a proof of Armstrong's conjecture:

\begin{theorem} \label{thm:armstrong}
The average size of an $(a,b)$-core is $(a+b+1)(a-1)(b-1)/24$.
\end{theorem}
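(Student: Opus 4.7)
The strategy is to build on the lattice-point picture used to prove Anderson's theorem. The abacus construction identifies $\mathcal{C}_{a,b}$ with the lattice points in a rational simplex $\mathfrak{c}$ of dimension $a-1$, and under this identification the size statistic becomes an explicit polynomial of degree two in the coordinates (at worst, piecewise quadratic, depending on how one sets up the abacus). Since Anderson's theorem already guarantees $|\mathcal{C}_{a,b}| = \Cat_{a,b}$, the average size equals
$$\frac{1}{\Cat_{a,b}} \sum_{v \in \mathfrak{c} \cap \mathbb{Z}^{a-1}} \mathrm{size}(v),$$
and the task reduces to evaluating the numerator.

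The natural tool is Euler-Maclaurin theory for rational polytopes, in the form developed by Berline-Vergne and Brion-Vergne. For a polynomial $f$ and a rational polytope $P$, it expresses $\sum_{v \in P \cap \mathbb{Z}^n} f(v)$ as $\int_P f\, dx$ plus a sum of correction terms, one for each face of $P$, built from constant-coefficient differential operators acting on integrals over those faces, with coefficients encoded by tangent-cone data. Since $\mathrm{size}$ has degree two, only faces of codimension at most two in $\mathfrak{c}$ can contribute nontrivially, so the Euler-Maclaurin expansion is a tractable finite expression. One then computes the volume integral $\int_{\mathfrak{c}} \mathrm{size}(x)\,dx$ (which will supply the leading asymptotic behaviour), the facet contributions, and a handful of edge/vertex corrections, and checks that after dividing by $\Cat_{a,b}$ the resulting expression simplifies to $(a+b+1)(a-1)(b-1)/24$.

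The main obstacle is the face-contribution terms. Because $\mathfrak{c}$ is rational rather than integral, its vertices sit at fractional points with denominators involving $a$ and $b$, and the Berline-Vergne $\mu$-functions at these vertices are naturally periodic quantities depending on congruences modulo $a$ and $b$. Showing that the sum of these individually awkward contributions collapses into the clean symmetric expression $(a+b+1)(a-1)(b-1)/24$ requires careful bookkeeping and the exploitation of the symmetry available (the $a \leftrightarrow b$ swap on the problem and the target formula, together with the cyclic symmetries hidden in the abacus). Armed with the piecewise polynomial description of $\mathrm{size}$ on $\mathfrak{c}$ established earlier in the paper, this reduces to a finite, combinatorial verification, and that verification is where the real work of the proof lies.
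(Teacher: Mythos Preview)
Your proposal is not so much a proof as a plan, and the plan has a real gap. The assertion that ``since $\mathrm{size}$ has degree two, only faces of codimension at most two in $\mathfrak{c}$ can contribute nontrivially'' is not correct for the Berline--Vergne local Euler--Maclaurin formula. Every face $F$ contributes a term of the form $\int_F D(P,F)\cdot f$, and the constant term of the operator $D(P,F)$ is the Berline--Vergne $\mu$-value of the transverse cone, which is generically nonzero for faces of \emph{every} codimension. For a degree-two integrand you can truncate each $D(P,F)$ at order two, but you cannot discard the faces themselves: the $(a-1)$-simplex has $2^a-1$ faces, and all of them appear. So the ``tractable finite expression'' you envision is in fact a sum over exponentially many faces, each carrying a $\mu$-value that, as you yourself note, is an unpleasant periodic quantity for a rational polytope. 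You then declare that the real work is a ``finite combinatorial verification'' without performing any of it; that is where the proof would have to happen, and nothing in the proposal indicates how it would go.

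The paper sidesteps this entire computation. It never evaluates a single Euler--Maclaurin correction term. Instead it uses Euler--Maclaurin only qualitatively: for fixed $a$, the total size $G_a(b)$ of all $(a,b)$-cores is a polynomial of degree $a+1$ in $b$, and the number $F_a(b)$ of such cores is a polynomial of degree $a-1$. The key step is Ehrhart reciprocity (in its weighted form): $G_a(-b)$ and $F_a(-b)$ are, up to sign, the corresponding sums over \emph{interior} lattice points of $b\Delta_{a-1}$, and since the interior is empty for $b=1,\dots,a-1$, both polynomials vanish at $-1,\dots,-(a-1)$. Hence the average $P_a(b)=G_a(b)/F_a(b)$ is a genuine polynomial of degree two in $b$. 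Three values then pin it down: $P_a(1)=0$ (the only $1$-core is empty), $P_a(-a-1)=P_a(1)=0$ (reciprocity again, since interior points of $(a+1)\Delta_{a-1}$ are the points of $\Delta_{a-1}$), and $P_a(0)=Q(0)=-(a^2-1)/24$ (the simplex $\SC_a(0)$ degenerates to the single point $0$ in the refined lattice). These three values force $P_a(b)=(a+b+1)(a-1)(b-1)/24$. The entire argument is interpolation, not summation; no face contributions, no $\mu$-functions, no bookkeeping.
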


\begin{remark}
 Armstrong conjectured Theorem \ref{thm:armstrong} in 2011; it first appeared in print in \cite{AHJ}.
Stanley and Zanello \cite{SZ} have proven the Catalan case ($a=b+1$) of Armstrong's conjecture by different methods, and building on their work Aggarwal \cite{Aggarwal} has proven the case $a=mb+1$.
\end{remark}

Our two main tools are the abacus construction and Ehrhart theory. We briefly recall these ideas before giving a high-level overview of the proofs of Theorems \ref{thm:anderson} and \ref{thm:armstrong}.

\subsubsection{Abaci}
The main tool used to study $a$-cores is the ``abacus construction''. We review this construction in detail in Section \ref{sec:abacus}. For now, we note that there are at least two variants of the abacus construction in the literature.
The first construction, which we call the \emph{positive abacus}, gives a bijection between $a$ core partitions and $\N^{a-1}$. Anderson's original proof used the positive abacus as part of a bijection between $(a,b)$-cores and $(a,b)$-Dyck paths, which were already known to be counted by $\Cat_{a,b}$.
We use the second construction, which we call the \emph{signed abacus}. The signed abacus is a bijection between $a$-core partitions and points in the $a-1$ dimensional lattice
$$\Lambda_a=\left\{c_1,\dots,c_a\in\Z\Big |\sum c_i=0\right\}$$

Key to our proof of Armstrong's conjecture is

\begin{theorem*}
Under the signed abacus bijection, the size of an $a$-core is given by the quadratic function
$$Q(c_1,\dots, c_a)=\frac{a}{2}\sum c_i^2+\sum ic_i$$
\end{theorem*}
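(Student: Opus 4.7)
The plan is to compute $|\lambda|$ directly from the signed abacus data. The first step is to pin down conventions. I would encode $\lambda$ by its Maya diagram $B\subset\Z$: a subset such that $B\setminus\Z_{<0}$ and $\Z_{<0}\setminus B$ are both finite, with the empty partition corresponding to $B_0=\Z_{<0}$. Splitting $B$ into its $a$ residue classes modulo $a$ gives the $a$ ``runners'', and the hypothesis that $\lambda$ is an $a$-core is equivalent to each runner being itself a translate of the reference half-line on that runner. The signed shift of runner $i$ is then $c_i$, and the fact that $|B|=|B_0|$ (in the symmetric-difference sense) forces $\sum c_i=0$, matching the lattice $\Lambda_a$.

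The second step is the actual computation. I would invoke the standard identity expressing the size as a signed sum of bead positions,
\[
|\lambda|=\sum_{b\in B\setminus B_0} b-\sum_{b\in B_0\setminus B}b,
\]
and then split this sum over the $a$ runners. Because each runner is flushed, its contribution is an arithmetic-progression sum whose value is a polynomial of degree at most two in $c_i$ alone --- different runners never interact. A short computation shows that the coefficient of $c_i^2$ on runner $i$ is exactly $a/2$, independent of $i$, while the linear coefficient of $c_i$ is determined by where runner $i$ starts, and works out to $i$ (with the correct indexing). Summing over all runners and using $\sum c_i=0$ to absorb any overall constant or shift terms yields exactly
\[
Q(c_1,\dots,c_a)=\tfrac{a}{2}\sum c_i^2+\sum i c_i.
\]

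The main obstacle is entirely bookkeeping: the linear part $\sum i c_i$ is sensitive to how runners are numbered $1,\dots,a$ and to where the zero of the Maya diagram is placed, so the wrong convention would change the formula by a linear functional of $c$ (which on $\Lambda_a$ could shift it by a multiple of $\sum c_i$ and thus go undetected, or by something worse). To guard against this, I would verify the formula on two families of sanity checks: the single-row $a$-cores (where all but one $c_i$ are easy to read off), and the self-conjugate $a$-cores (where the involution $c_i\leftrightarrow c_{a+1-i}$ on $\Lambda_a$ should preserve $Q$, forcing the linear part to be $\sum i c_i$ up to the admissible shift). Once these check out, the general computation is routine arithmetic-progression summation.
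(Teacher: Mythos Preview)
Your proposal is correct and follows essentially the same route as the paper: both compute the size via the energy identity (your signed sum of bead positions is exactly the paper's $e(S)=\sum_{k\in S^+}k-\sum_{k\in S^-}k$), split runner by runner, sum the resulting arithmetic progressions to get $\tfrac{a}{2}(c_k^2-c_k)+(k+\tfrac12)c_k$ per runner, and then use $\sum c_k=0$ to simplify. The only cosmetic differences are your integer versus the paper's half-integer Maya diagram convention and your added sanity checks on the linear term, which the paper omits.
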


We are not sure where exactly where this theorem originates; a stronger version is used in \cite{GKS} and \cite{DS} to prove generating functions counting certain partitions are modular forms.   For completeness, we give a proof as Theorem \ref{thm:quadratic}.

\subsubsection{Ehrhart / Euler-Maclaurin}
The number of lattice points in a polytope can be viewed as a discrete version of the volume of a polytope. Ehrhart theory is the study of this analogy. A gentle introduction to Ehrhart theory may be found in \cite{BR}.
Let $V$ be an $n$ dimensional real vector space, and $\Lambda\subset V$ an $n$ dimensional lattice. Conretely, $\Lambda=\Z^n, V=\R^n$. A lattice polytope $P\subset V$ is a polytope all of whose vertices are points of $\Lambda$.
For $t$ a positive integer, let $tP$ denote the polytope obtained by scaling $P$ by $t$. For $t\geq 0$, define $L(P,t)$ to the number of lattice points in $tP$:
$$L(P,t)=\#\{\Lambda\cap tP\}$$
Clearly, the volume of $tP$ is $t^n$ times the volume of $P$. Ehrhart showed that, parallel to this fact, $L(P,t)$ is a degree $n$ polynomial in $t$.

Other than the fact that $L(P,t)$ is a polynomial of degree $n$, the one fact from Ehrhart theory we use is Ehrhart reciprocity. If we scale a polytope by $-t$, then keeping track of orientation the volume changes by $(-t)^n$. The polynomial $L(P,t)$ is not in general even or odd, and so $L(P,-t)$ cannot be $(-1)^n$ times the number of lattice points in $-P$. Ehrhart reciprocity states that instead 
$$L(P,-t)=(-1)^nL(P^\circ, t)$$
 where $P^\circ$ denotes the interior of $P$.

The results of Ehrhart theory extend to an analogy between integrating a polynomial over a region and summing it over the lattice points in a polytope. This is an extension of the familiar ``sum of the first $n$ cubes'' formulas.  Specifically, if $f$ is a polynomial of degree $d$ on $V$, then $\int_{tP} f$ is a polynomial of degree $d+n$. Euler-Maclaurin theory says that the discrete analog
$$L(f, P, t)=\sum_{x\in\Lambda\cap tP} f(x)$$
is also a polynomial of degree $d+ n$. Ehrhart reciprocity also extends : $$L(f,P,-t)=(-1)^nL(f, -P^\circ, t)$$
Although unsurprising to experts, apparently this extension was first used (without proof) in \cite{CJM}; a proof now appears in \cite{AB}.

\subsubsection{Initial motivation}
To explain the method used to prove Theorems \ref{thm:anderson} and \ref{thm:armstrong}, we begin with the following

\begin{FalseHope}
Fix $a$. Under the signed abacus construction, the set of $(a,b)$-cores are exactly those lattice points in $bP$, for some integral polytope $P\subset V_a$.
\end{FalseHope}

If the false hope were true, Ehrhart theory would imply that, for $b$ relatively prime to a fixed $a$, $|\mathcal{C}_{a,b}|$ would be a polynomial of degree $a-1$ in $b$. It is clear from the definition that this polynomiality property holds for $\Cat_{a,b}$. Thus, proving Anderson's theorem for a fixed $a$ would reduce to showing that two polynomials are equal, which only requires checking finitely many values.

Furthermore, it is known that the size of an $a$-core is a quadratic function $Q$ on the lattice. Thus, if the False Hope were true Euler-Maclaurin theory would give that the total size of all $(a,b)$ cores was a polynomial of degree $a+1$ in $b$, and again we could hope to exploit this polynomiality in a proof.

\subsubsection{}
The False Hope is not quite true, but the strategy outlined above is essentially the one we follow. The set of $b$ cores inside the lattice of $a$ cores is a simplex, which we call $\SC_a(b)$ for \emph{Simplex of Cores}.
One minor tweak needed to the False Hope is that as we vary $b$ $\SC_a(b)$ is not only scaled, but also changed by a linear transformation. These transformations preserve the number of lattice points and the quadratic function $Q$ giving the size of the partitions, and so do not pose any real difficulties.
More troubling is that the polytope $\SC_{a}(b)$ is not integral, but only rational. Recall that a polytope $P$ is \emph{rational} if there is some $k\in\Z$ so that $kP$ is a lattice polytope.

\subsubsection{Rational Polytopes and quasipolynomials}
Ehrhart and Euler/Maclaurin theory can be extended to rational polytopes at the cost of replacing polynomials by \emph{quasipolynomial}.

\begin{definition}
A function $f:\Z\to\C$ is a quasipolynomial of degree $d$ and period $n$ if there exist $n$ polynomials $p_0,\dots, p_{n-1}$ of degree $d$, so that for $x\in k+n\Z$, we have $f(x)=p_k(x)$.
\end{definition}

\begin{example} Let $P$ be the polytope $x, y\geq 0, 2x+y\leq 1$. Then
$$\#\{tP\cap \Z^2\}=\left\{\begin{array}{rl} \frac{t^2+4t+4}{4} & \text{$t$ even} \\
\frac{t^2+4t+3}{4} & \text{$t$ odd}\end{array}\right.$$
\end{example}

Since $\Cat_{a,b}$ is defined only for $a$ and $b$ relatively prime, it fits nicely into the quasipolynomial framework. For $a$ fixed, and $b$ in a fixed residue class mod $a$, $\Cat_{a,b}$ is a polynomial. It just so happens that residue classes relatively prime to $a$ have identical polynomials. Such ``accidental'' equalities between the polynomials for different residue classes happen frequently in Ehrhart theory, but are mysterious in general. Perhaps the most studied manifestation of this is \emph{period collapse} (see \cite{Haase} and references), where the quasipolynomial is in fact a polynomial. In our case, symmetry considerations give an elementary explanation of the ``accidental'' equalities between the polynomials for different residue classes.

\subsubsection{}
In Lemma \ref{lem:standardsimplex} we show that the the polyhedron $\SC_a(b)$ is isomorphic to a rational simplex we call $\TD_a(b)$ (for \emph{Trivial Determinant}) that we now describe. Let $L_k$ be the one dimensional representation of $\Z_a$ where $1\in \Z_a$ acts as $\exp(2\pi ik/a)$. Then any $b$ dimensional representation $V$ of $\Z_a$ may be written as
$$V=\bigoplus_{k=0}^{a-1} L_k^{\oplus z_k}$$
for nonnegative integers $z_i$ satisfying $\sum z_i=b$. Thus, there is a bijection between the set of $b$ dimensional representations of $\Z_a$ and the standard simplex $b\Delta_{a-1}$, which has $\binom{a-1+b}{b}$ lattice points.

The simplex $\TD_a(b)$ is obtained by considering only those representations that have trivial determinant (i.e., $\wedge^b V\cong L_0$), or equivalently restricting to the index $a$ sublattice given by $\sum iz_i=0\pmod a$.

More generally, consider the set of representations with determinant isomorphic to $L_k$ for any $k$. Tensoring $V$ by $L_1$ corresponds to the cyclic permutation of coordinates $z_k\mapsto z_{k+1}$, and changes the determinant of $V$ by tensoring by $L_b$ (where we are using periodic indices). Thus, the dual $\Z_a$ acts on the set of all $b$ dimensional representations of $\Z_a$, and when $b$ is relatively prime to $a$ this action is free, and each orbit contains exactly one representation with trivial determinant. Hence, the number of points in $\TD_a(b)$ is exactly one $a$th of the number of points in $b\Delta_a$, namely $\binom{a-1+b}{b}/a=\Cat_{a,b}$.  

Thus the identification of $\SC_a(b)$ and $\TD_a(b)$ reproves Anderson's theorem. With some more work, Armstrong's conjecture follows in a similar manner.

The situation is illustrated in Figure \ref{fig:sublattice}. The left hand picture shows $\TD_3(10)\cong \SC_3(10)$, while the right hand picture shows the standard simplex $10\Delta_2$. The black dots are the representations with trivial determinant, while the red and green dots are those representations with determinant $L_1$ and $L_2$. Rotating about the blue circle by 120 degrees corresponds to tensoring by $L_1$ and permutes the different colored dots.
\begin{figure}[h!]
\caption{$\Lambda_3$ and $\Lambda_3^\prime$ inside $\mathcal{C}_{3, 10}$}
\label{fig:sublattice}
\begin{tikzpicture}[scale=.8]
\begin{scope}
\clip (-3.5, -4*1.7320508/2)--(-3.5, 4*1.7320508/2)--(2.5,
4*1.7320508/2)--(2.5, -4*1.7320508/2)--cycle;
\fill[gray!50] (-3,0)--(2, 5/3*1.7320508)--(2, -5/3*1.7320508)--cycle;
\foreach \x in {-7,-6,...,7}{
\foreach \y in {-7,-6,...,7}{
\node[draw,circle,inner sep=1pt,fill] at (\x-\y/2,1.7320508*\y/2) {};}}
\draw[dashed] (2, 10*1.7320508/2)--(2, -5*1.7320508);
\draw[dashed] (-15-3, 5*1.7320508)--(15-3, -5*1.7320508);
\draw[dashed] (-15-3, -5*1.7320508)--(15-3, 5*1.7320508);
\end{scope}
\begin{scope}[xshift=9cm]
\clip (-3.5, -4*1.7320508/2)--(-3.5, 4*1.7320508/2)--(2.5,
4*1.7320508/2)--(2.5, -4*1.7320508/2)--cycle;
\fill[gray!50] (-3,0)--(2, 5/3*1.7320508)--(2, -5/3*1.7320508)--cycle;
\draw[thick, blue] (.333,0) circle(.08);
\foreach \x in {-7,-6,...,7}{
\foreach \y in {-7,-6,...,7}{
\node[draw,circle,inner sep=1pt,fill] at (\x-\y/2,1.7320508*\y/2) {};}}
\begin{scope}[green, xshift=.5cm, yshift=.2886751cm]
\foreach \x in {-7,-6,...,7}{
\foreach \y in {-7,-6,...,7}{
\node[draw,circle,inner sep=.7pt,fill] at (\x-\y/2,1.7320508*\y/2) {};}}
\end{scope}
\begin{scope}[red, xshift=.5cm, yshift=-.2886751cm]
\foreach \x in {-7,-6,...,7}{
\foreach \y in {-7,-6,...,7}{
\node[draw,circle,inner sep=.7pt,fill] at (\x-\y/2,1.7320508*\y/2) {};}}
\end{scope}
\draw[dashed] (2, 10*1.7320508/2)--(2, -5*1.7320508);
\draw[dashed] (-15-3, 5*1.7320508)--(15-3, -5*1.7320508);
\draw[dashed] (-15-3, -5*1.7320508)--(15-3, 5*1.7320508);
\end{scope}
\end{tikzpicture}
\end{figure}

\subsubsection{Self-conjugate simultaneous cores}
The lattice point technique easily adapts to treat the case of self-conjugate simultaneous cores. Ford, Mai and Sze have shown \cite{FMS} that self-conjugate $(a,b)$-core partitions are counted by
$$\binom{\big\lfloor \frac{a}{2} \big\rfloor +\big\lfloor \frac{b}{2} \big\rfloor}{\big\lfloor \frac{a}{2} \big\rfloor}$$

Armstrong conjectured, and Chen, Huang and Wang recently proved \cite{CHW}, that the average size of self-conjugate $(a,b)$-core partitions is the same as the average size of all $(a,b)$-core partitions, namely $(a+b+1)(a-1)(b-1)/24$.

In Section \ref{sec:conjugate} we give new proofs of both of these results. A key idea is that the action of conjugation on $\SC_a(b)$ corresponds to the action of taking dual representations on $\TD_a(b)$.

\subsection{The \texorpdfstring{$q$}{q}-analog}
Section \ref{sec:qcat} applies the lattice point framework to $q$-rational Catalan numbers.

\subsubsection{$q$-analogs}
The $q$-rational Catalan numbers $\Cat_{a,b}(q)$ are defined by the obvious $q$-analog of $\Cat_{a,b}$:
$$\Cat_{a,b}(q)=\frac{1}{[a+b]_q}{a+b \brack a}_q$$
It is nontrivial that the coefficients of $\Cat_{a,b}(q)$ are positive \cite{GG}. The main question we pursue in Section \ref{sec:qcat} is whether our lattice point view can shed any light on this positivity question.

An obvious hope is that $\Cat_{a,b}(q)$ is a sum over the lattice points in $\SC_{a,b}$ of $q^L$, where $L$ is some linear function. This does not appear to be true. However, we conjecture that there is an index $a^{a-2}$ sublattice $\Lambda^\prime$ of the lattice of cores, and a function $\iota$ on the cosets $\mathfrak{c}$ of $\Lambda^\prime$, so that $\Cat_{a,b}(q)$ is the sum over the lattice points in $\SC_{a,b}$ of $q^{L+\iota}$; this would give an expression for $\Cat_{a,b}(q)$ as a sum of $a^{a-2}$ terms of the form
$$q^{\iota(\mathfrak{c})}{S(\mathfrak{c}) \brack a-1}_{q^a}.$$
which would explain the positivity of the coefficients of $\Cat_q(a,b)$.

Furthermore, this conjectural formula leads naturally to a unimodality conjecture about $\Cat_{a,b}(q)$. Recall that a sequence $a_1,\dots, a_n$ is \emph{unimodal} if there is some $k$ so that $$a_1\leq a_2\leq \dots \leq a_{k-1}\leq a_k \geq a_{k-1} \geq a_{k-2}\geq\dots\geq a_n$$
The coefficients of $\Cat_{a,b}$ are not unimodal. However, we conjecture that, if we fix $0
\leq k <a$, and look only at the coefficients of $\Cat_{a,b}(q)$ of the form $q^{an+k}$, the resulting sequences are unimodal.

\subsection{The \texorpdfstring{$(q,t)$}{(q,t)}-analog}
Armstrong, Hanusa and Jones \cite{AHJ} have defined a $(q,t)$-analog of $\Cat_{a,b}$ by counting $(a,b)$-cores according to length $\ell$ and co-skewlength $\sk^\prime$, and have made a \emph{symmetry} and a \emph{specialization} conjecture about $\Cat_{a,b}(q,t)$. Section \ref{sec:qt} uses lattice point techniques to make progress toward these conjectures.

\subsubsection{Definition and conjectures}
We first introduce the skew length statistic needed to define $(q,t)$-rational Catalan numbers.

\begin{definition}
Let $a<b$ be relatively prime, and $\lambda$ an $(a,b)$-core. The \emph{$b$-boundary} of $\lambda$ consists of all cells $\square\in\lambda$ with $h(\square)<b$.

Group the parts of $\lambda$ into $a$ classes, according to $\lambda_i-i\mod a$; (note, at least one of the $a$ classes is empty since $\lambda$ is an $a$-core). The \emph{$a$-parts of $\lambda$} consist of the maximal $\lambda_i$ among each of the $i$ residue classes.

The \emph{skew length of $\lambda, \sk(\lambda)$} is the number of cells of $\lambda$ that are in an $a$-row of $\lambda$ and in the $b$-boundary of $\lambda$. The \emph{co-skew-length} $\sk^\prime(\lambda)$ is $(a-1)(b-1)/2-\sk(\lambda)$. Note that the skew length depends on $a$ and $b$; where necessary, where we refer to the $(a,b)$ skew length.
\end{definition}

\begin{definition}
Let $a<b$ be relatively prime. The \emph{$(q,t)$-rational Catalan number} is
$$\Cat_{a,b}(q,t)=\sum_{\lambda\in\mathcal{C}_{a,b}} q^{\ell(\lambda)}t^{\sk^\prime(\lambda)}$$
\end{definition}

\begin{example}
We illustrate that the $(3,11)$ skew length of the partition $\lambda=9+7+5+3+2+2+1+1$ is $9$. Each cell of $\lambda$ is labeled with its hook length; if the cell is part of the $11$-boundary we have written the text in blue. Beneath each part of $\lambda$ we have written $\lambda_i-i\pmod 3$; if the parts is a $3$-part of $\lambda$ we have written it in red. The cells that contribute to the skew length have been shaded light green.
\begin{center}
\begin{tikzpicture}[scale=.5]
\fill[green!20!white] (0,2) rectangle (1,9);
\fill[green!20!white] (4,0) rectangle (5,2);
\draw[thin, gray] (0,0) grid (1,8);
\draw[thin, gray] (0,0) grid (2,6);
\draw[thin, gray] (0,0) grid (3,4);
\draw[thin, gray] (0,0) grid (6,2);
\draw[thin, gray] (0,0) grid (7,1);
\draw[thick] (0,0)--(0,9)--(1,9)--(1,7)--(2,7)--(2,5)--(3,5)--(3,3)--(4,3)--(4,2)--(6,2)--(6,1)--(8,1)--(8,0)--cycle;
\draw (.5, .5) node{16};
\draw ( .5,1.5) node{13 };
\draw[blue] ( .5,2.5) node{10};
\draw[blue] ( .5,3.5) node{8 };
\draw[blue] ( .5,4.5) node{7 };
\draw[blue] ( .5,5.5) node{5 };
\draw[blue] ( .5,6.5) node{4 };
\draw[blue] ( .5,7.5) node{2 };
\draw[blue] ( .5,8.5) node{1 };
\draw (1.5, .5) node{13};
\draw[blue] (1.5, 1.5) node{10};
\draw[blue] (1.5, 2.5) node{ 7};
\draw[blue] (1.5, 3.5) node{ 5};
\draw[blue] (1.5, 4.5) node{ 4};
\draw[blue] (1.5, 5.5) node{ 2};
\draw[blue] (1.5, 6.5) node{ 1};
\draw[blue] (2.5, .5) node{10};
\draw[blue] (2.5, 1.5) node{ 7};
\draw[blue] (2.5, 2.5) node{ 4};
\draw[blue] (2.5, 3.5) node{ 2};
\draw[blue] (2.5, 4.5) node{ 1};
\draw[blue] (3.5, .5) node{7 };
\draw[blue] (3.5, 1.5) node{4 };
\draw[blue] (3.5, 2.5) node{1 };
\draw[blue] (4.5, .5) node{5 };
\draw[blue] (4.5, 1.5) node{2 };
\draw[blue] (5.5, .5) node{4 };
\draw[blue] (5.5, 1.5) node{1 };
\draw[blue] (6.5, .5) node{2 };
\draw[blue] (7.5, .5) node{1 };
\draw[red] (.5, -.5) node{2};
\draw (1.5, -.5) node{2};
\draw (2.5, -.5) node{2};
\draw (3.5, -.5) node{2};
\draw[red] (4.5, -.5) node{0};
\draw (5.5, -.5) node{2};
\draw (6.5, -.5) node{0};
\draw (7.5, -.5) node{2};
\end{tikzpicture}
\end{center}
\end{example}

There are two main conjectures about $\Cat_{a,b}(q,t)$:

\begin{conjecture}[Specialization]
$$\sum_{\lambda\in \mathcal{C}_{a,b}} q^{\ell(\lambda)+\sk(\lambda)}=q^{(a-1)(b-1)/2}\Cat_{a,b}(q,1/q)=\Cat_{a,b}(q)$$
\end{conjecture}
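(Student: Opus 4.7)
The plan is to use the signed abacus bijection between $\mathcal{C}_{a,b}$ and the lattice points of the rational simplex $\SC_a(b) \cong \TD_a(b)$. The second equality in the conjecture is purely formal: substituting $\sk^\prime(\lambda) = (a-1)(b-1)/2 - \sk(\lambda)$ into the definition of $\Cat_{a,b}(q,t)$ produces it at once, so the content lies entirely in the first equality, $\sum_{\lambda \in \mathcal{C}_{a,b}} q^{\ell(\lambda) + \sk(\lambda)} = \Cat_{a,b}(q)$.

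First I would express the statistic $\ell(\lambda) + \sk(\lambda)$ explicitly as a function on the abacus coordinates $(c_1, \dots, c_a) \in \Lambda_a$ and realize the left-hand sum as a weighted lattice point sum on $\TD_a(b)$. The length $\ell(\lambda)$ is essentially a count of beads past the canonical position and is straightforwardly affine linear in the $c_i$. The skew length is the harder piece: $\sk(\lambda)$ counts cells lying simultaneously in an $a$-row of $\lambda$ and in its $b$-boundary, so on the abacus side the $a$-rows are picked out by the maxima among the runners, while the $b$-boundary condition becomes a hook-length cutoff. Combining the two is unlikely to yield a single linear function; rather, I expect a decomposition $\ell + \sk = L + \iota$ in which $L$ is linear in $(c_1,\dots,c_a)$ and $\iota$ depends only on the coset $\mathfrak{c}$ of an index-$a^{a-2}$ sublattice $\Lambda^\prime \subset \Lambda_a$ -- precisely the structure conjectured in Section~\ref{sec:qcat}.

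Granted such a decomposition, I would split the lattice-point sum over $\TD_a(b)$ according to the cosets of $\Lambda^\prime$, obtaining the $a^{a-2}$-term expression
$$\sum_{\lambda \in \mathcal{C}_{a,b}} q^{\ell(\lambda) + \sk(\lambda)} = \sum_{\mathfrak{c}} q^{\iota(\mathfrak{c})} {S(\mathfrak{c}) \brack a-1}_{q^a},$$
and then match this against the closed form $\Cat_{a,b}(q) = \frac{1}{[a+b]_q} {a+b \brack a}_q$ via a $q$-Vandermonde-type identity. Two sanity checks are that setting $q = 1$ must recover Anderson's Theorem~\ref{thm:anderson}, and that for $a = 3$ the expression reduces to $a^{a-2} = 3$ terms, matching the direct verification the author performs in that case.

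The main obstacle is the first step: identifying $\Lambda^\prime$ and $\iota$ amounts to finding a clean formula for $\sk(\lambda)$ in signed abacus coordinates. Unlike $|\lambda|$, which the quadratic formula $Q$ already expresses globally, and unlike $\ell$, which is manifestly linear, $\sk$ mixes global data -- the identification of the $a$-parts, determined by the sorted order of the $c_i$ -- with the local cutoff $h(\square) < b$. This sorted-order dependence is the natural source of the coset correction, but it is also where the uniformly linear lattice-point framework threatens to break down, which presumably explains why a full proof is currently out of reach and only the cases $a = 3$ and low-degree terms are accessible. Once it is surmounted, the concluding $q$-Vandermonde matching should be routine.
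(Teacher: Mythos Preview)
First, a framing point: the statement is labeled a \emph{Conjecture} in the paper, and the paper does not prove it in general. It is verified only for $a=3$, as a trivial consequence of the explicit rational-function expression for $\Cat_{3,b}(q,t)$ in Proposition~\ref{prop:qt3}. So there is no ``paper's own proof'' to compare against, only a partial verification by a method quite different from yours.

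Your plan has a concrete error and a structural gap. The error: you assert that $\ell(\lambda)$ is ``straightforwardly affine linear in the $c_i$''. It is not. Proposition~\ref{prop:length} gives $\ell(\mathbf{x}) = -\tfrac{a-1}{2} + a\max_i x_i$, which is only \emph{piecewise} linear, with chambers given by the braid arrangement. In fact it is $\ell$, not $\sk$, that carries the sorted-order dependence you attribute to $\sk$; the paper's formula for $\sk$ (Proposition~\ref{prop:skewlength}) is $S_a$-invariant. This reverses your diagnosis of where the difficulty lies and undermines the proposed decomposition $\ell+\sk = L+\iota$ with $L$ globally linear.

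The structural gap is that your plan is circular with respect to the paper's other open problems. You propose to realize the left-hand side in the form $\sum_{\mathfrak{c}} q^{\iota(\mathfrak{c})}{S(\mathfrak{c})\brack a-1}_{q^a}$ and then match it to $\Cat_{a,b}(q)$ by a $q$-Vandermonde identity. But the existence of such an $\iota$ with this property \emph{is} Conjecture~\ref{conj:cosets}, which the paper states but cannot prove (and for which it cannot even guess an explicit $\iota$). So your outline does not reduce Specialization to something known; it reduces it to another conjecture of comparable difficulty.

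By contrast, the paper's route to the $a=3$ case avoids the coset picture entirely: it establishes that $\ell$ and $\sk$ are piecewise linear with Catalan-arrangement chambers, applies Brion's cone decomposition chamber-by-chamber to write $\Cat_{3,b}(q,t)$ as an explicit sum of three rational functions, and then checks Specialization by direct substitution $t=1/q$. Extending this to general $a$ would require controlling the combinatorics of all Catalan chambers and their orbifold cosets, which is where the paper stops.
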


\begin{conjecture}[Symmetry]
$$\Cat_{a,b}(q,t)=\Cat_{a,b}(t,q)$$
\end{conjecture}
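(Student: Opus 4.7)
The plan is to work inside the lattice-point framework developed in the paper, expressing both $\ell$ and $\sk^\prime$ as explicit (piecewise polynomial) functions on the simplex $\SC_a(b)\cong\TD_a(b)$, and then to exploit either a geometric symmetry or a direct bijection on lattice points. First I would write $\ell(\lambda)$ in signed abacus coordinates $(c_1,\dots,c_a)\in\Lambda_a$; the length of a partition corresponds to how far the beads on the abacus have shifted past the origin, which should be a piecewise linear function whose pieces are cut out by coordinate-difference hyperplanes of $\Lambda_a$. Next I would do the same for $\sk(\lambda)$, and hence $\sk^\prime(\lambda)$: since the $a$-parts of $\lambda$ are indexed by residues mod $a$ and the $b$-boundary is controlled by the hook-length condition, $\sk$ should decompose as a sum of terms indexed by pairs $(i,j)$, each a piecewise (bi)linear function of $c_i$ and $c_j$.

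With both statistics written as functions on the simplex, the natural next step is to hunt for an involution $\phi:\SC_a(b)\to\SC_a(b)$ with $\ell\circ\phi=\sk^\prime$ and $\sk^\prime\circ\phi=\ell$. The obvious candidates are conjugation of partitions (which, by the paper's identification, acts on $\TD_a(b)$ by the dualization $z_k\mapsto z_{-k}$ of $\Z_a$-representations), the cyclic rotations $z_k\mapsto z_{k+1}$ coming from tensoring with $L_1$, and reflections through the barycenter of $\TD_a(b)$; composing these gives a small dihedral-type group of symmetries to test. For $a=3$ the simplex is two-dimensional and one can tabulate all lattice points, write out $\Cat_{a,b}(q,t)$ piece by piece, and verify symmetry by direct computation; I would expect the resulting argument to tie $(3,b)$-cores to chambers of the Catalan hyperplane arrangement and to quadratic permutation statistics, as flagged in the abstract.

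For low-degree terms in $q$ (equivalently, small length) one can enumerate directly: cores with $\ell(\lambda)=k$ small have very constrained abacus coordinates, and matching them against cores with $\sk^\prime(\lambda)=k$ (equivalently $\sk(\lambda)=(a-1)(b-1)/2-k$) should be feasible for each fixed small $k$. The main obstacle is plainly the general case: for arbitrary $a$ no natural symmetry of the simplex simultaneously exchanges $\ell$ and $\sk^\prime$, and the paper itself only claims partial results. A full proof seems to demand either a non-geometric bijection that reshuffles lattice points across chambers, or a Weyl-group-like symmetry of $\TD_a(b)$ that has so far eluded detection; identifying this structure is the heart of the conjecture and the reason it remains open.
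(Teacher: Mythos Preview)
This statement is a \emph{conjecture} in the paper, not a theorem; the paper gives no proof of it in general, only the partial results you anticipate (the case $a=3$ and low-degree symmetry). Your proposal is therefore not a proof either, and you correctly say so at the end. There is no gap to point to because you do not claim to close one.

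That said, your outline of the partial results diverges in method from what the paper actually does. You propose to search for an involution $\phi$ on $\SC_a(b)$ exchanging $\ell$ and $\sk^\prime$, testing conjugation, cyclic rotation, and barycentric reflection. The paper does not pursue this at all: for $a=3$ it instead computes $\Cat_{3,b}(q,t)$ as an explicit rational function via the Brion decomposition of the two chambers of $\polyh=\dominant\cap\SC_3(b)$, and then observes that the resulting expression is visibly symmetric in $q$ and $t$. For the low-degree terms, rather than enumerating cores with small $\ell$ directly, the paper quotients by the $S_a$ action, works near the two distinguished vertices $0$ and $x_\infty$ of $\polyh$, and reduces the comparison to a generating-function identity over the $(a-1)!$ orbifold cosets of a refined lattice $\Lambda_R$. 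That identity is then proved by introducing the quadratic permutation statistic $\siz(\sigma)$ and showing $\sum_{\sigma\in S_n} q^{\siz(\sigma)}t^{\maj(\sigma)}=\prod_{k=1}^n[k]_{q^{n+1-k}t}$ via a factorization code on $S_n$. Your involution idea, while natural, is not what carries the paper's partial results, and the paper gives no evidence that any of the dihedral-type symmetries you list exchanges $\ell$ with $\sk^\prime$.
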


Our first result in Section \ref{sec:qt} is that $\ell$ and $\sk$ are piecewise linear functions on the simplex of cores $\SC_a(b)$, with walls of linearity given by the Catalan arrangement. This linearity means we can apply lattice point techniques to $\Cat_{a,b}(q,t)$; in particular, thereoms of Brion, Lawrence, and Varchenko.

\subsubsection{Brion, Lawrence, Varchenko}
Let $\mathcal{P}$ be a $d$-dimensional rational polytope. The \emph{enumerator function} $\sigma_{\mathcal{P}}(x)$ is a Laurent series whose monomials record the lattice points inside $\mathcal{P}$. Specifically:
$$\sigma_{\mathcal{P}}(x)=\sum_{\bf{m}\in \mathcal{P}\cap \Z^d}x^{\bf{m}}$$
where $x^{\bf{m}}=x_1^{m_1}x_2^{m_2}\cdots x_d^{m_d}$.
Theorems of Brion \cite{Brion}, Lawrence \cite{Lawrence}, and Varchenko \cite{Varchenko} (see \cite{BHS}) express $\sigma_{\mathcal{P}}(x)$ as a sum of rational functions determined by the cones at the vertices of $\mathcal{P}$.
\subsubsection{Rationality}
Since any count of the lattice points in $\mathcal{P}$ with respsect to linear functions is a specialization of the indicator function $\sigma_{\mathcal{P}}$, we may apply their theorems to each chamber of linearity of $\ell$ and $\sk$ to obtain expressions for $\Cat_{a,b}(q,t)$ as rational functions. 
\begin{proposition}
Fix $a$. Then $\Cat_{a,b}(q,t)$ has a uniform expression as a rational function in $q$ and $t$, with the dependence on $b$ only appearing in the exponents of the numerator. For $b$ in a fixed residue class modulo $a$, this dependence is linear.
\end{proposition}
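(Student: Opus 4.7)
The plan is to partition $\SC_a(b)$ into the chambers of the Catalan arrangement, apply Brion–Lawrence–Varchenko to each chamber, and then track how the resulting formula depends on $b$. Using the piecewise linearity just established, I rewrite
$$\Cat_{a,b}(q,t) = \sum_j \; \sum_{\lambda \in C_j(b) \cap \Lambda} q^{L_j(\lambda)} t^{M_j(\lambda)},$$
where $\{C_j(b)\}$ indexes the closed maximal chambers of the Catalan arrangement inside $\SC_a(b)$ and $L_j, M_j$ are linear functions that agree with $\ell$ and $\sk^\prime$ on $C_j(b)$; a standard inclusion–exclusion handles shared boundary points. Crucially, neither $L_j$, $M_j$, nor the combinatorial type of the decomposition depends on $b$: the Catalan arrangement is fixed, while $\SC_a(b)$ is merely $b$ times a fixed rational polytope (up to the lattice automorphism of Lemma \ref{lem:standardsimplex}). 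In particular, every vertex has the form $v_{j,k}(b) = b \cdot w_{j,k}$ for a fixed rational $w_{j,k}$, and the edge vectors at the tangent cones are $b$-independent.

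Next, I would apply Brion–Lawrence–Varchenko to each $C_j(b)$, triangulating any nonsimple tangent cones once and for all, to obtain
$$\sum_{\lambda \in C_j(b) \cap \Lambda} q^{L_j(\lambda)} t^{M_j(\lambda)} = \sum_{k} \frac{N_{j,k}(q,t;b)}{\prod_{i} \bigl(1 - q^{\alpha_{j,k,i}} t^{\beta_{j,k,i}}\bigr)}.$$
The denominator exponents come from pairing $L_j, M_j$ against the primitive edge vectors of the (fixed) tangent cones, so are independent of $b$. Each numerator $N_{j,k}$ is a sum of monomials $q^{L_j(p)} t^{M_j(p)}$ over the lattice points $p$ in a half-open fundamental parallelepiped based at the vertex $v_{j,k}(b)$.

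To conclude, I need to verify that as $b$ ranges over a fixed residue class modulo $a$, the shape of each $N_{j,k}$ is invariant up to a $b$-linear shift of exponents. Passing from $b$ to $b+a$ translates $v_{j,k}(b)$ by $a\, w_{j,k}$; provided this translation lies in the lattice spanned by the edge vectors at the vertex, it simply permutes the coset representatives inside the parallelepiped and shifts each monomial's $q$- and $t$-exponents by the fixed amounts $L_j(a\, w_{j,k})$ and $M_j(a\, w_{j,k})$. Assembling these contributions over all $j$ and $k$ yields the desired uniform rational expression, with the dependence on $b$ confined to, and linear in, the exponents of the numerator. The main obstacle I anticipate is precisely this last arithmetic verification — that $a\, w_{j,k}$ always lies in the appropriate edge lattice at every vertex, including nonsimple ones — which should reduce to the defining congruence $\sum i z_i \equiv 0 \pmod{a}$ that cuts $\TD_a(b)$ out of $b\Delta_{a-1}$ together with the $\Z_a$-symmetry of tensoring by $L_1$, but writing down a clean uniform statement across arbitrary chambers of the Catalan arrangement is where the real bookkeeping lives.
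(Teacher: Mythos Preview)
Your approach is essentially the paper's: decompose $\SC_a(b)$ into the chambers of the (scaled) Catalan arrangement where $\ell$ and $\sk'$ are linear, then apply Brion--Lawrence--Varchenko chamber by chamber. The paper in fact gives \emph{less} detail than you do, stating the proposition as an ``immediate corollary'' of piecewise linearity plus BLV and only working things out explicitly for $a=3$.

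One small correction: your claim that the linear functionals $L_j,M_j$ on each chamber are $b$-independent is not quite right. The co-skew-length is $\sk'(\lambda)=(a-1)(b-1)/2-\sk(\lambda)$, and on a fixed chamber the formula for $\sk$ itself picks up a constant term from the $\lfloor x_i-x_j-b/a\rfloor_0$ pieces. So $M_j$ has a constant term that depends (linearly) on $b$. This does not damage the argument---it simply contributes another linear-in-$b$ shift to the numerator exponents, which is exactly what the statement allows---but you should not absorb all the $b$-dependence into the vertex locations alone. With that adjustment, the arithmetic worry you flag at the end (that the shift $b\mapsto b+a$ moves each vertex by a vector of the edge lattice) is indeed the only thing left to check; the paper does not spell this out either, and in the $a=3$ computation one sees it holding because the relevant shifts land in the refined lattice $\Lambda_R$ rather than in $\Lambda_a$ itself.
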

\begin{example}{$\Cat_{3,b}(q,t)$}
In Proposition \ref{prop:qt3}, we explicitly compute this rational function when $a=3$. Write $b=3k+1+\delta$, where $\delta\in\{0,1\}$. Then
\begin{multline*}
\Cat_{a,b}(q,t)=\frac{t^{3k+\delta}}{(1-qt^{-1})(1-q^2t^{-1})} +\frac{q^kt^{k+1}+q^{k+\delta}t^{k+\delta}+q^{k+1}t^k}{(1-q^{-1}t^2)(1-q^2t^{-1})} \\+\frac{q^{3k+\delta}}{(1-tq^{-1})(1-t^2q^{-1})}
\end{multline*}
From this expression it is trivial to check the Symmetry and Specialization conjectures for $a=3$.
\end{example}
It is possible that this method could lead to a complete proof of the Symmetry and Specialization conjectures. This would require a thorough understanding of the geometry of the Catalan arrangement with respect to the shifted lattice $\Lambda+s$.
As a first step in this direction, we verify both conjectures for low degree terms for general $a$ and $b$. To do so we use a quadratic permutation statistic $\siz(\sigma)$. The Symmetry and Specialization conjectures in low degree reduce to the following formula for the joint distribution of the $\siz$ and $\maj$ statistics:
\begin{lemma}
$$\sum_{\sigma\in S_n} q^{\siz(\sigma)}t^{\maj(\sigma)}=\prod_{k=1}^n [k]_{q^{n+1-k}t}$$
\end{lemma}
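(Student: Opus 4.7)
The plan is to use the standard insertion bijection $\Phi\colon S_n\to\prod_{k=1}^n\{0,1,\ldots,k-1\}$ associated to $\maj$. For $\sigma\in S_n$ and $1\le k\le n$, let $\sigma|_{[k]}$ denote the standardization of the subword of $\sigma$ consisting of the entries $\le k$, viewed as an element of $S_k$. Setting $m_k(\sigma):=\maj(\sigma|_{[k]})-\maj(\sigma|_{[k-1]})$, the classical insertion argument shows that adjoining the new maximum $k$ at its $k$ possible positions in $\sigma|_{[k-1]}$ realizes each of the major indices $\maj(\sigma|_{[k-1]})+r$, $r=0,\ldots,k-1$, exactly once. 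Consequently $m_k(\sigma)\in\{0,\ldots,k-1\}$, the map $\Phi(\sigma)=(m_1(\sigma),\ldots,m_n(\sigma))$ is a bijection, and $\maj(\sigma)=\sum_k m_k(\sigma)$.

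Summing the weight $q^{(n+1-k)m_k}t^{m_k}$ independently over each coordinate of the code yields
$$\prod_{k=1}^n[k]_{q^{n+1-k}t}=\sum_{\sigma\in S_n}q^{\sum_k(n+1-k)\,m_k(\sigma)}\,t^{\sum_k m_k(\sigma)},$$
so the lemma is equivalent to the identity
$$\siz(\sigma)=\sum_{k=1}^n(n+1-k)\,m_k(\sigma)=\sum_{j=1}^n\maj(\sigma|_{[j]}),$$
the second equality being the change of summation $\sum_{k\le j}m_k(\sigma)=\maj(\sigma|_{[j]})$. Equivalently, this is the recursion
$$\siz(\sigma)-\siz(\sigma|_{[n-1]})=\maj(\sigma),$$
which I would then verify by induction on $n$.

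The main obstacle is establishing this recursion from the explicit definition of the quadratic statistic $\siz$ supplied in Section~\ref{sec:qt}: one must show that deleting the letter $n$ from $\sigma$ and restandardizing decreases $\siz$ by exactly $\maj(\sigma)$. This is a single local combinatorial claim whose verification depends entirely on the precise quadratic form of $\siz$; once it is in hand, the rest of the argument is just the code expansion above. If the direct verification is awkward, a natural fallback is to prove the equivalent \emph{cumulative} form $\siz(\sigma)=\sum_{j=1}^n\maj(\sigma|_{[j]})$ directly, matching each quadratic term in $\siz$ against an inversion-type contribution to some $\maj(\sigma|_{[j]})$.
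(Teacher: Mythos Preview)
Your overall architecture---produce a code $\Phi:S_n\to\prod_k\{0,\dots,k-1\}$ with $\maj(\sigma)=\sum_k a_k$ and $\siz(\sigma)=\sum_k(n+1-k)a_k$, then sum independently over the coordinates---is exactly the paper's strategy. The problem is that the particular code you chose, the Carlitz insertion $m_k(\sigma)=\maj(\sigma|_{[k]})-\maj(\sigma|_{[k-1]})$ built by deleting the largest value, does \emph{not} satisfy the second identity. Concretely, the recursion $\siz_n(\sigma)-\siz_{n-1}(\sigma|_{[n-1]})=\maj(\sigma)$ fails already for $\sigma=231\in S_3$: here $\DES(\sigma)=\{2\}$, $\inv(\sigma)=2$, so $\siz_3(231)=(4-2)\cdot 2-2=2$, while $\sigma|_{[2]}=21$ has $\siz_2(21)=(3-1)\cdot 1-1=1$ and $\maj(231)=2$, giving $1+2=3\neq 2$. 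Equivalently, $\sum_j\maj(\sigma|_{[j]})=0+1+2=3\neq\siz(231)$. So the ``single local combinatorial claim'' you defer is simply false, and the fallback cumulative form is the same false statement.

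What is true is the equidistribution $(\siz,\maj)\sim\big(\sum_k(n+1-k)m_k,\maj\big)$, but your argument assumes pointwise equality, and once that fails the reduction no longer buys anything. The paper instead uses the \emph{left-decreasing factorization code}: writing $\sigma=(C_n^-)^{a_n}(C_{n-1}^-)^{a_{n-1}}\cdots(C_2^-)^{a_2}$ with $C_k^-=(k,k-1,\dots,1)$, one checks that each left multiplication by $C_k^-$ increases $\maj$ by $1$ and $\siz$ by $n+1-k$, so this code really does satisfy both identities pointwise. For $\sigma=231$ this code gives $(a_2,a_3)=(0,2)$, yielding $\sum(n+1-k)a_k=1\cdot 2=2=\siz(231)$, whereas your code gives $(m_2,m_3)=(1,1)$ and $\sum(n+1-k)m_k=2+1=3$. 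If you want to salvage your argument, you must replace the delete-the-maximum bijection with one compatible with $\siz$; the factorization code is the natural choice.
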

After posting the initial preprint of this paper, I learned that quadratic permutation statistics have previously been considered by Bright and Savage \cite{BS} in the context of lecture hall permutations.  In particular, they had already proven the needed needed distribution if $\siz$ and $\maj$.

\subsection{Acknowledgements}
I learned about Armstrong's conjecture over dinner after speaking in the MIT combinatorics seminar. I would like to thank Jon Novak for the invitation, Fabrizio Zanello for telling me about the conjecture, and funding bodies everywhere for supporting seminar dinners.  

Thanks to Carla Savage for pointing out the previous occurence of quadratic permutation statistics.


\section{Abaci and Electrons} \label{sec:abacus}

This section is a review of the fermionic viewpoint of partitions and the abaci model of $a$-cores.  It contains no new material.  The main results are that $a$-cores are in bijection with points on the ``charge lattice'' $\Lambda_a$, and the size of a given $a$-core is given by a quadratic function on the lattice
.


\subsection{Fermions}
 We begin with a motivating fairy tale. It should not be mistaken for an attempt at accurate physics or accurate history.

\subsubsection{A fairy tale}
According to quantum mechanics, the possible energies levels of an electron are quantized -- they can only be half integers i.e., elements of $\Z_{1/2}=\{a+1/2|a\in\Z\}$.  In particular, basic quantum mechanics predicts electrons with negative energy.  Physically, it makes no sense to have negative energy electrons.

Dirac's \emph{electron sea} solves the problem of negative energy electrons by redefining the vacuum state $\vac$.  The Pauli exclusion principle states that each possible energy level can have at most one electron in it; thus, we can view any set of electrons as a subset $S\subset \Z_{1/2}$.  Intuitively, the vacuum state $\vac$ should consist of empty space with no electrons at all, and hence correspond to the set $S=\emptyset\subset\Z_{1/2}$.

Dirac suggested instead to take $\vac$ to be an
infinite ``sea'' of negative energy electrons.   Specifically, in Dirac's vacuum state every negative energy level is filled with an electron, but none of the positive energy states are filled. Then by Pauli's
exclusion principle we cannot add a negative energy electron
to $\vac$, but positive energy electrons can be added as usual.  Thus, Dirac's electron sea solves the problem of negative energy electrons.

As an added benefit, Dirac's electron sea predicts the positron, a particle
that has the same energy levels as an electron, but positive charge.  Namely, a positron corresponds to a ``hole'' in the electron sea, that is, a negative
energy level \emph{not} filled with an electron.  Removing a negative energy electron results in adding positive charge and positive energy, and hence can be interpreted as a having a positron.

\subsubsection{} Our fairytale leads us to the following definitions:

\begin{definition} Let $\Z_{1/2}^\pm$ denote the set of all positive/negative half integers, respectively.

The vacuum $\vac\subset \Z_{1/2}$ is the set $\Z_{1/2}^-$.

A \emph{state} $S$ is a set $S\subset\Z+1/2$ so that the symmetric
difference 
$$S \triangle \vac=(S\cap\Z_{1/2}^+)\cup (S^c\cap\Z_{1/2}^-)$$
 is finite.  States should be interpreted as a finite collection of electrons -- 
the elements of $S\cap \Z^+_{1/2}$, which we will denote by $S^+$, and a finite collections of positrons -- the elements of $S^c\cap \Z^-_{1/2}$, which we will denote by $S^-$.

The \emph{charge} $c(S)$ of a state $S$ is the number of positrons minus the number of electrons:
$$c(S)=| S^-| -| S^+|$$

The $\emph{energy}$ $e(S)$ of a state $S$ is the sum of all the energies of the positrons and the electrons:
$$e(S)=\sum_{k\in S^+}\; k +\sum_{k\in S^-}\; -k$$

\end{definition}

\subsubsection{Maya Diagrams}

It is convenient to have a graphical representation of states $S$.

The \emph{Maya diagram} of $S$ is an infinite sequence of circles on the $x$-axis, one circle centerred at each element of $\Z_{1/2}$, with the positive circles extending to the left and the negative direction to the right.  A black ``stone'' is placed on the
circle corresponding to $k\in\Z_{1/2}$ if and only if $k\in S$.

\begin{example}
The Maya diagram corresponding to the vacuum vector $\vac$ is shown below.
\begin{center}
\begin{tikzpicture}[scale=.6]
\draw (-5.5,0) node{$\cdots$};
\draw (-4.5,0) circle (.3) node[below=3pt]{$\frac{9}{2}$};
\draw (-3.5,0) circle (.3) node[below=3pt]{$\frac{7}{2}$};
\draw (-2.5,0) circle (.3) node[below=3pt]{$\frac{5}{2}$};
\draw (-1.5,0) circle (.3) node[below=3pt]{$\frac{3}{2}$};
\draw (-.5,0) circle (.3) node[below=3pt]{$\frac{1}{2}$};
\filldraw (.5,0) circle (.3) node[below=3pt]{$\frac{-1}{2}$};
\filldraw (1.5,0) circle (.3) node[below=3pt]{$\frac{-3}{2}$};
\filldraw (2.5,0) circle (.3) node[below=3pt]{$\frac{-5}{2}$};
\filldraw (3.5,0) circle (.3) node[below=3pt]{$\frac{-7}{2}$};
\filldraw (4.5,0) circle (.3) node[below=3pt]{$\frac{-9}{2}$};
\draw (5.5,0) node{$\cdots$};
\draw (0,-.5) to (0,.5);
\end{tikzpicture}
\end{center}
\end{example}

\begin{example} \label{ex:particles}
The following Maya diagram illustrates the state $S$ consisting of an
electron of energy $3/2$, and two positrons, of energy $1/2$ and
$5/2$.

\begin{center}
\begin{tikzpicture}[scale=.6]
\draw (0,-.5) to (0,.5);
\draw (-5.5,0) node{$\cdots$};
\draw (-4.5,0) circle (.3) node[below=3pt]{$\frac{9}{2}$};
\draw (-3.5,0) circle (.3) node[below=3pt]{$\frac{7}{2}$};
\draw (-2.5,0) circle (.3) node[below=3pt]{$\frac{5}{2}$};
\filldraw (-1.5,0) circle (.3) node[below=3pt]{$\frac{3}{2}$};
\draw (-.5,0) circle (.3) node[below=3pt]{$\frac{1}{2}$};
\draw (.5,0) circle (.3) node[below=3pt]{$\frac{-1}{2}$};
\filldraw (1.5,0) circle (.3) node[below=3pt]{$\frac{-3}{2}$};
\draw (2.5,0) circle (.3) node[below=3pt]{$\frac{-5}{2}$};
\filldraw (3.5,0) circle (.3) node[below=3pt]{$\frac{-7}{2}$};
\filldraw (4.5,0) circle (.3) node[below=3pt]{$\frac{-9}{2}$};
\draw (5.5,0) node{$\cdots$};
\end{tikzpicture}
\end{center}
\end{example}

\subsection{Paths}

We now describe a bijection between the set of partitions $\mathcal{P}$ to the
set of charge 0 states, that sends a partition
$\lambda\in\mathcal{P}_n$ of size $n$ to a state $S_\lambda$ with
energy $e(S_\lambda)=n$.  This bijection can be understood in two ways:
as recording the boundary path of $\lambda$, or recording the modified
Frobenius coordinates of $\lambda$.

\subsubsection{}
We draw partitions in ``Russian
notation'' -- rotated $\pi/4$ radians counterclockwise and scaled up
by a factor of $\sqrt{2}$, so that each segment of the border path of
$\lambda$ is centered above a half integer on the $x$-axis.  We traverse the boundary path of $\Lambda$ from left to right.  For each segment of the border path, we place an electron in the corresponding energy level if that segment of the border slopes up, and we leave the energy state empty if that segment of border path slopes down.

\begin{example} \label{ex:electronstopartitions}

We illustrate the bijection in the case of $\lambda=3+2+2$.  The corresponding state $S_\lambda$ consists of two
electrons with energy $5/2$ and $1/2$, and two positrons with energy
$3/2$ and $5/2$.

\begin{center}
\begin{tikzpicture}
\begin{scope}[gray, very thin, scale=.6]
\clip (-5.5, 5.5) rectangle (5.5, -5);
\draw[rotate=45, scale=1.412] (0,0) grid (6,6);
\end{scope}

\begin{scope}[rotate=45, very thick, scale=.6*1.412]
\draw (0,5.5) -- (0, 3) -- (1,3) -- (1,2) -- (3,2) -- (3,0) -- (5.5,0);
\end{scope}

\begin{scope}[scale=.6, dotted]

\draw (-4.5,0) -- (-4.5, 4.5);
\draw (-3.5,0) -- (-3.5, 3.5);
\draw (-2.5,0) -- (-2.5, 3.5);
\draw (-1.5,0) -- (-1.5, 3.5);
\draw (-.5,0) -- (-.5, 3.5);
\draw (.5,0) -- (.5, 4.5);
\draw (1.5,0) -- (1.5, 4.5);
\draw (2.5,0) -- (2.5, 3.5);
\draw (3.5,0) -- (3.5, 3.5);
\draw (4.5,0) -- (4.5, 4.5);
\end{scope}

\begin{scope}[scale=.6, yshift=-.5cm]
\draw[solid] (0,-.5) -- (0,.5);
\draw (-5.5,0) node{$\cdots$};
\draw (-4.5,0) circle (.3) node[below=3pt]{$\frac{9}{2}$};
\draw (-3.5,0) circle (.3) node[below=3pt]{$\frac{7}{2}$};
\filldraw (-2.5,0) circle (.3) node[below=3pt]{$\frac{5}{2}$};
\draw (-1.5,0) circle (.3) node[below=3pt]{$\frac{3}{2}$};
\filldraw (-.5,0) circle (.3) node[below=3pt]{$\frac{1}{2}$};
\filldraw (.5,0) circle (.3) node[below=3pt]{$\frac{-1}{2}$};
\draw (1.5,0) circle (.3) node[below=3pt]{$\frac{-3}{2}$};
\draw (2.5,0) circle (.3) node[below=3pt]{$\frac{-5}{2}$};
\filldraw (3.5,0) circle (.3) node[below=3pt]{$\frac{-7}{2}$};
\filldraw (4.5,0) circle (.3) node[below=3pt]{$\frac{-9}{2}$};
\draw (5.5,0) node{$\cdots$};
\end{scope}
\end{tikzpicture}
\end{center}

\end{example}

\subsubsection{Frobenius Coordinates}

The energies of the electrons and the positrons of $\lambda$ are
the \emph{modified Frobenius coordinates},

The $y$-axis dissects the partition $\lambda$ into two pieces.  The left side of $\lambda$ consists of $c$ rows, where $c$ is the number
of electrons.  The length of the $i$th row is the energy of
the $i$th electron.  The right half of $\lambda$ also consists of $c$ rows, with lengths the energies of the positrons.

\begin{example} Consider Example \ref{ex:electronstopartitions}.  If the $y$-axis was drawn in, left of the $y$-axis would be two rows, the bottom row having length 2.5 and the top row length .5 -- these were precisely the energies of the electrons in $S$.  Similarly, the right hand side has two rows of length 2.5 and 1.5, the energies of the positrons in $S$.

\end{example}

\subsubsection{Non-zero charge}

The bijection between partitions and states of charge zero may be
modified to give a bijection between partitions and states of charge $c$ for any $c\in\Z$.   Simply translate the partition to the right by $c$.

\subsection{Abaci}

Rather than view the Maya diagram as a series of stones in a line, we
now view it as beads on the runner of an abacus.  Sliding the beads
to be right justified allows the charge of the state to be read off,
as it is easy to see how many electrons have been added or are missing
from the vacuum state.

In what follows, we mix our metaphors and talk about electrons and protons on runners of an abacus.

\begin{example} \label{ex:mayabijection}
Consider Example \ref{ex:particles}, where the Maya diagram consists of
two positrons and an electron.  Pushing the beads to be right
justified, we see the first bead is one step to the right of zero, and
hence the original state had charge 1.

\begin{center}
\begin{tikzpicture}[scale=.6]
\draw (0,-.5) to (0,.5);
\draw (-5.5,0) node{$\cdots$};
\draw (-4.5,0) circle (.3) node[below=3pt]{$\frac{9}{2}$};
\draw (-3.5,0) circle (.3) node[below=3pt]{$\frac{7}{2}$};
\draw (-2.5,0) circle (.3) node[below=3pt]{$\frac{5}{2}$};
\filldraw (-1.5,0) circle (.3) node[below=3pt]{$\frac{3}{2}$};
\draw (-.5,0) circle (.3) node[below=3pt]{$\frac{1}{2}$};
\draw (.5,0) circle (.3) node[below=3pt]{$\frac{-1}{2}$};
\filldraw (1.5,0) circle (.3) node[below=3pt]{$\frac{-3}{2}$};
\draw (2.5,0) circle (.3) node[below=3pt]{$\frac{-5}{2}$};
\filldraw (3.5,0) circle (.3) node[below=3pt]{$\frac{-7}{2}$};
\filldraw (4.5,0) circle (.3) node[below=3pt]{$\frac{-9}{2}$};
\draw (5.5,0) node{$\cdots$};

\begin{scope}[yshift=-2.5cm]
\draw[-triangle 45, snake=snake,line after snake=1mm] (-4,0)--(4,0) node [above,  text centered,midway] {Push beads};

\end{scope}

\begin{scope}[yshift=-4cm]
\draw (0,-.5) to (0,.5);
\draw (-5.5,0) node{$\cdots$};
\draw (-4.5,0) circle (.3) node[below=3pt]{$\frac{9}{2}$};
\draw (-3.5,0) circle (.3) node[below=3pt]{$\frac{7}{2}$};
\draw (-2.5,0) circle (.3) node[below=3pt]{$\frac{5}{2}$};
\draw (-1.5,0) circle (.3) node[below=3pt]{$\frac{3}{2}$};
\draw (-.5,0) circle (.3) node[below=3pt]{$\frac{1}{2}$};
\draw (.5,0) circle (.3) node[below=3pt]{$\frac{-1}{2}$};
\filldraw (1.5,0) circle (.3) node[below=3pt]{$\frac{-3}{2}$};
\filldraw (2.5,0) circle (.3) node[below=3pt]{$\frac{-5}{2}$};
\filldraw (3.5,0) circle (.3) node[below=3pt]{$\frac{-7}{2}$};
\filldraw (4.5,0) circle (.3) node[below=3pt]{$\frac{-9}{2}$};
\draw (5.5,0) node{$\cdots$};
\end{scope}

\end{tikzpicture}
\end{center}
\end{example}

\subsubsection{Cells and hook lengths}

The cells $\square\in\lambda$ are in bijection with the
\emph{inversions} of the boundary path; that is, by pairs of segments
$(\text{step}_1, \text{step}_2)$, where $\text{step}_1$ occurs before $\text{step}_2$,
but $\text{step}_1$ is traveling NE and $\text{step}_2$ is traveing SE.  The bijection
sends $\square$ to the segments at the end of its arm and leg.

Translating to the fermionic viewpoint, cells of $\lambda$ are in
bijection with pairs 
$$\left\{(e, e-k)\big | e\in\Z_{1/2}, k>0\right\}$$ of a filled energy level $e$ and an
empty energy level $e-k$ of lower energy; we call such a pair an \emph{inversion}.  The hook length
$h(\square)$ of the corresponding cell is $k$.

If $(e,e-k)$ is such a pair, reducing the energy of the electron from
$e$ to $e-k$ changes $\lambda$ by removing the
rim hook corresponding to the cell $\square$.  This rim-hook has
length $k$.

\begin{example}
The cell $\square=(2,1)$ of $\lambda=3+2+2$ (See Example \ref{ex:electronstopartitions}).
Here, $h(\square)=3$, and corresponds to the electron in energy state $1/2$
and the empty energy level $-5/2$; which are three apart.

\end{example}

\subsection{Bijections}

Rather than place the electrons corresponding to $\lambda$ on one
runner, place them on $a$ different runners, putting the energy
levels $ka-i-1/2$ on runner $i$.

If the hooklength $h(\square)=ka$ is divisible by $a$, then the two
energy levels of $\textrm{inversion}(\square)$ lie on the same
runner.  Similarly, any inversion of energy states on the same
runner corresponds to a cell with hook length divisible by $a$.

Thus, $\lambda$ is an $a$-core if and only if the beads on each runner of the $a$-abacus are right justified.  Although the total charge of all the runners must be zero, the charge need not be evenly divided among the runners.  Let
$c_i$ be the charge on the $i$th runner; then we have $\sum c_i=0$, and the $c_i$ determine $\lambda$.

Similarly, given any $\mathbf{c}=(c_0,\dots,c_{a-1})\in\Z^a$ with $\sum c_i=0$, there is a unique right justified abacus with charge  $c_i$ on the $i$th runner.  The coresponding partition is an $a$-core which we denote $\core_a(\mathbf{c})$.

We have shown:

\begin{lemma}
There is a bijection $$\core_a:\{(c_0,\dots,c_{a-1}|c_i\in\Z, \sum c_i=0\}\to \{\lambda | \lambda \text{ is in $a$-core} \}$$
\end{lemma}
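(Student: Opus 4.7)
The proof will essentially just collect and formalize the constructions developed in the surrounding paragraphs. The plan is to define two maps, check each is well-defined, and check they are mutual inverses.

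First I would define the forward map: given an $a$-core $\lambda$, draw its Maya diagram $S_\lambda$, distribute the beads onto $a$ runners by placing the energy level $ka - i - 1/2$ on runner $i$, and let $c_i(\lambda)$ be the charge of runner $i$ (positrons minus electrons on that runner). Since $\lambda$ corresponds to a charge-$0$ state, the total charge $\sum_i c_i(\lambda)$ equals $0$, so $(c_0,\dots,c_{a-1})$ lies in the claimed lattice. The key observation I must check is that each runner is right-justified. This uses the inversion/hook correspondence: a cell $\square\in\lambda$ with $h(\square)$ divisible by $a$ corresponds to an inversion $(e,e-ka)$ both of whose endpoints lie on the same runner, i.e., a filled bead above an empty bead on that runner. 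Since $\lambda$ is an $a$-core, no such cell exists, so every runner is right-justified. In particular, the charge $c_i$ together with the right-justification property determines the bead configuration on runner $i$ uniquely.

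Next I would define the inverse map $\core_a$: given $\mathbf{c} = (c_0,\dots,c_{a-1}) \in \Z^a$ with $\sum c_i = 0$, place on each runner $i$ the unique right-justified configuration with charge $c_i$ (explicitly, shift the vacuum on runner $i$ by $c_i$ positions). Interleaving the runners back onto $\Z_{1/2}$ yields a Maya diagram; because the total charge is zero, this Maya diagram corresponds to a genuine partition $\lambda = \core_a(\mathbf{c})$. By the hook/inversion correspondence used above, the absence of same-runner inversions (a consequence of right-justification on every runner) means $\lambda$ has no hook length divisible by $a$, hence is an $a$-core.

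Finally I would check that the two maps are inverse. One direction is immediate: starting from $\mathbf{c}$, building the right-justified abacus, and reading off the runner charges returns $\mathbf{c}$. The other direction follows because an $a$-core $\lambda$ has a right-justified abacus, and this right-justified abacus is uniquely reconstructed from the vector of runner charges.

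The only real obstacle is bookkeeping, namely ensuring consistent conventions across the runner labeling, the sign of charge, and the direction of right-justification, so that the two procedures indeed invert each other. Once the conventions are pinned down, the argument is essentially a direct translation of the preceding discussion of Maya diagrams, charges, and hook-length-$a$ inversions.
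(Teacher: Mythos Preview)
Your proposal is correct and follows essentially the same approach as the paper: the paper's ``proof'' is exactly the preceding discussion (it writes ``We have shown:'' before stating the lemma), which consists of splitting the Maya diagram onto $a$ runners, observing that cells with hook length divisible by $a$ correspond to same-runner inversions so that $a$-cores are precisely the right-justified abaci, and noting that such abaci are determined by and determine the vector of runner charges. Your write-up is a faithful formalization of this argument.
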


\begin{example}
We illustrate that $\core_3(0,3, -3)=7+5+3+3+2+2+1+1$.

\begin{center}
\begin{tikzpicture}

\begin{scope}[gray, very thin, scale=.4]
\clip (-11.5, 11.5) rectangle (11.5, -10);
\draw[rotate=45, scale=1.412] (0,0) grid (12,12);
\end{scope}

\begin{scope}[rotate=45, ultra thick, scale=.4*1.412]
\begin{scope}[red]
\draw (0,11.5)--(0,11);
\draw (0,9)--(0,8);
\draw (1,7)--(1,6);
\draw (2,5)--(2,4);
\draw (3,3)--(4,3);
\draw (5,2)--(6,2);
\draw (7,1)--(8,1);
\draw (9,0)--(10,0);
\end{scope}
\begin{scope}[blue]
\draw (0,11)--(0,10);
\draw (0,8)--(0,7);
\draw (1,6)--(1,5);
\draw (2,4)--(2,3);
\draw (4,3)--(4,2);
\draw (6,2)--(6,1);
\draw (8,1)--(8,0);
\draw (10,0)--(11,0);
\end{scope}
\begin{scope}[green]
\draw (0,10)--(0,9);
\draw (0,7)--(1,7);
\draw (1,5)--(2,5);
\draw (2,3)--(3,3);
\draw (4,2)--(5,2);
\draw (6,1) -- (7,1);
\draw (8,0)--(9,0);
\draw (11,0)--(11.5,0);
\end{scope}

\end{scope}

\begin{scope}[scale=.4, yshift=-.5cm, red]
\draw(-11.5,0) circle(.3);
\draw(-8.5, 0)  circle (.3);
\draw(-5.5,0)  circle (.3);
\draw(-2.5,0)  circle (.3);
\filldraw(.5,0)  circle (.3);
\filldraw(3.5,0)  circle (.3);
\filldraw(6.5,0)  circle (.3);
\filldraw(9.5,0)  circle (.3);
\end{scope}

\begin{scope}[scale=.4, yshift=-1.5cm, blue]
\draw(-10.5,0) circle(.3);
\draw(-7.5, 0)  circle (.3);
\draw(-4.5,0)  circle (.3);
\draw(-1.5,0)  circle (.3);
\draw(1.5,0)  circle (.3);
\draw(4.5,0)  circle (.3);
\draw(7.5,0)  circle (.3);
\filldraw(10.5,0)  circle (.3);
\end{scope}

\begin{scope}[scale=.4, yshift=-2.5cm, green]
\draw(-9.5,0) circle(.3);
\filldraw(-6.5, 0)  circle (.3);
\filldraw(-3.5,0)  circle (.3);
\filldraw(-.5,0)  circle (.3);
\filldraw(2.5,0)  circle (.3);
\filldraw(5.5,0)  circle (.3);
\filldraw(8.5,0)  circle (.3);
\filldraw(11.5,0)  circle (.3);
\end{scope}

\begin{scope}[scale=.4]
\draw (0,-3)--(0,0);
\end{scope}

\end{tikzpicture}
\end{center}
\end{example}

\subsection{Size of an \texorpdfstring{$a$}{a}-core}

\begin{theorem} \label{thm:quadratic}
$$|\core_a(\mathbf{c})|=\frac{a}{2}\sum_{k=0}^{a-1} c_k^2+ kc_k$$
\end{theorem}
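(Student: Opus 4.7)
The plan is to decompose $|\core_a(\mathbf{c})| = e(S)$ additively across the $a$ runners of the abacus and compute each runner's contribution as an explicit quadratic in $c_i$. Because the energy formula $e(S) = \sum_{k\in S^+} k + \sum_{k\in S^-}(-k)$ is a sum over particles and each particle lives on exactly one runner, the splitting $e(S) = \sum_{i=0}^{a-1} E_i$ is immediate from the description of a right-justified abacus in terms of $\mathbf{c}$.

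On runner $i$ the energy levels form an arithmetic progression $\{ka - i - \tfrac{1}{2} : k\in\Z\}$ with common difference $a$. With the convention that $c_i$ equals the abacus charge $|S_i^-| - |S_i^+|$ on runner $i$, a right-justified configuration with $c_i>0$ consists of exactly $c_i$ positrons at absolute energies $i+\tfrac{1}{2},\,a+i+\tfrac{1}{2},\ldots,(c_i-1)a+i+\tfrac{1}{2}$, while $c_i<0$ corresponds to $|c_i|$ electrons at energies $a-i-\tfrac{1}{2},\,2a-i-\tfrac{1}{2},\ldots,|c_i|a-i-\tfrac{1}{2}$. In either case a one-line arithmetic-progression sum yields
\[
E_i \;=\; \frac{a\,c_i(c_i-1)}{2} + \left(i+\frac{1}{2}\right)c_i \;=\; \frac{a}{2}c_i^2 + \left(i + \frac{1-a}{2}\right)c_i,
\]
the $c_i<0$ case producing the same polynomial after the substitution $|c_i|=-c_i$.

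Summing over $i$ then gives
\[
|\core_a(\mathbf{c})| \;=\; \frac{a}{2}\sum_{i=0}^{a-1} c_i^2 \;+\; \sum_{i=0}^{a-1} i\,c_i \;+\; \frac{1-a}{2}\sum_{i=0}^{a-1} c_i,
\]
and the last term vanishes because $\sum c_i = 0$, leaving exactly $\frac{a}{2}\sum c_k^2 + \sum k c_k$. The only genuinely delicate point in this plan is fixing the sign convention for $c_i$ (left-shift versus right-shift of the beads on a runner), so I would pin it down once and for all by comparing to a small case such as $\mathbf{c}=(0,3,-3)$ with $a=3$, whose associated $3$-core has size $24 = \frac{3}{2}(0+9+9)+(0\cdot 0 + 1\cdot 3 + 2\cdot(-3))$, before trusting the derivation. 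Beyond this bookkeeping, I expect no serious obstacle, since both steps are elementary.
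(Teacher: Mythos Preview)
Your proposal is correct and follows essentially the same approach as the paper: split the total energy runner by runner, compute the positron case and the electron case separately via an arithmetic-progression sum to obtain the common quadratic $\frac{a}{2}(c_k^2-c_k)+(k+\tfrac12)c_k$, and then invoke $\sum c_k=0$ to kill the linear cross-term. The only addition is your explicit sanity check against $\mathbf{c}=(0,3,-3)$, which is a nice safeguard but not a different idea.
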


\begin{proof}
 If  $c_k>0$ the $k$th runner has $c_k$ positrons, with
 energies 
\begin{align*}
(k+1/2)&,\\
(k+1/2)&+a, \\
(k+1/2)&+2a,\\
\vdots\quad &\quad \vdots \\
 (k+1/2)&+(c_k-1)a
\end{align*}
 and so the
 particles on the $k$th runner have total energy
 $$\frac{a}{2}(c_k^2-c_k)+(k+1/2)c_k.$$

 If $c_k<0$, the $k$th runner has $-c_k$ electrons, and a similar calculation shows they have a total energy of $$\frac{a}{2}(c_k^2+c_k)-c_k(a-k-1/2)=\frac{a}{2}(c_k^2-c_k)+(k+1/2)c_k.$$

Since $\sum c_k=0$, the total energy of all particles simplifies to $\frac{a}{2}\sum (c_k^2+kc_k)$.
\end{proof}

\section{Simultaneous Cores} \label{sec:numberandsize}

We now turn to studying the set of $b$-cores within the lattice
$\Lambda_a$ of $a$-cores.
\subsection{\texorpdfstring{$(a,b)$}{(a,b)}-cores form a simplex}

First, some notation and conventions.

Let $r_a(x)$ be the remainder when $x$ is divided by $a$, and $q_a(x)$ to be the integer part of $x/a$, so that $x=aq_a(x)+r_a(x)$ for all $x$.
Furthermore, we use cyclic indexing for $\mathbf{c}\in\Lambda_a$;
that is, for $k\in\Z$, we set $c_k=c_{r_a(k)}$.

\begin{lemma}
Within the lattice of $a$ cores, the set of $b$ cores are the
lattice points satisfying the inequalities $$c_{i+b}-c_{i}\leq q_a(b+i)$$
for $i\in\{0,\dots,a-1\}$.
\end{lemma}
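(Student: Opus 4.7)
The plan is to translate the condition ``$\lambda$ is a $b$-core'' into an inequality on the charge vector $\mathbf{c}$ by unwinding the abacus description. By the bijection between cells of $\lambda$ and inversions $(e, e-k)$ in the Maya diagram (where $k$ is the hook length of the corresponding cell), $\lambda$ is a $b$-core if and only if there is no pair $(e, e-b)$ with $e$ filled and $e-b$ empty in the Maya diagram.

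The first step is to compute the largest filled energy level $f_i$ on each runner of the $a$-abacus associated to $\core_a(\mathbf{c})$. A case analysis on the sign of $c_i$, identical in flavor to the calculation in the proof of Theorem \ref{thm:quadratic}, shows that in both cases the filled positions on runner $i$ are exactly $\{ja - i - 1/2 : j \leq -c_i\}$, giving the uniform formula $f_i = -c_i\,a - i - 1/2$.

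Next, if $e$ lies on runner $i$ (so $e \equiv -i - 1/2 \pmod{a}$), then $e - b$ lies on runner $r_a(i+b)$. The ``no hook of length $b$'' condition reads: for every $i$ and every filled $e \leq f_i$ on runner $i$, one has $e - b \leq f_{r_a(i+b)}$. Since the left-hand side is maximized at $e = f_i$, this is equivalent to the single inequality $f_i - f_{r_a(i+b)} \leq b$ for each $i \in \{0, \ldots, a-1\}$. Substituting the formula for $f_i$, the inequality becomes $(c_{r_a(i+b)} - c_i)\,a + (r_a(i+b) - i) \leq b$; combined with the defining identity $b + i - r_a(i+b) = a\,q_a(i+b)$, this simplifies to $c_{i+b} - c_i \leq q_a(i+b)$, as claimed (using the cyclic indexing convention $c_{i+b} = c_{r_a(i+b)}$).

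The only step requiring real care is the computation of $f_i$ together with the coordination of the two different indexings (of runners versus of charges, and the reduction of $i+b$ modulo $a$); once those are settled the inequality drops out of a short algebraic manipulation. I do not anticipate any genuine obstacle.
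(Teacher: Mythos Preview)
Your proposal is correct and follows essentially the same route as the paper: both compute the highest filled energy level on each runner (your $f_i$ is the paper's $e_i=-ac_i-i-1/2$), both reduce the $b$-core condition to the single inequality $f_i-b\leq f_{r_a(i+b)}$ by using that the beads on each runner of an $a$-core are right-justified, and both finish with the same algebraic simplification via $b+i-r_a(i+b)=a\,q_a(i+b)$. The paper spells out the sufficiency argument (why checking only the top bead on each runner is enough) slightly more explicitly, but your line ``the left-hand side is maximized at $e=f_i$'' captures the same point.
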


\begin{proof}

Fix $\mathbf{c}\in\Lambda_a$, and consider the corresponding
$a$-abacus.

Let $\lambda=\core_a(\mathbf{c})$ be an $a$ core, and let $e_i$ denote the energy of the highest electron the $i$th runner.  We claim that $\core_a(\mathbf{c})$ is a $b$-core if an only if for each $i$, the energy state $e_i-b$ is filled.

Certainly this condition is necessary.  To see that it is sufficient, suppose that $\lambda$ is an $a$-core, and that $e_i-b$ are all filled.  To see $\lambda$ is a $b$ core, we must show that for any filled energy level $L$, that $L-b$ is filled.  

Suppose that $L$ is on the $i$th runner; then $L=e_i-aw$ for some $w\geq 0$, and so $L-b=(e_i-b)-aw$.  But by supposition $e_i-b$ is a filled state, and $e_i-b-aw$ is to the right of it and on the same runner, and so it must be filled since $\lambda$ is an $a$-core.

Now, the energy state $e_{i}-b$ is on runner $r_a(i+b)$, and so
$\lambda$ is $b$-core if and only if $e_{i}-b\leq e_{i+b}$ (recall that we are using cyclic indexing).

Substituting $e_k=-ac_k-r(k)-1/2$ and simplifying gives that our inequality is equivalent to
\begin{align*}
a(c_{i+b}-c_i)& \leq b+i-r_a(i+b)
\end{align*}
and hence to
$$c_{i+b}-c_i \leq q_a(b+i).$$

\end{proof}

We have $a$ hyperplanes in an $a-1$ dimensional space; they either
form a simplex or an unbounded polytope.

\begin{remark}  The same analysis sheds light on the case when $a$ and
  $b$ are not relatively prime, which has been studied in \cite{AKS}.

Let $d=\gcd(a,b)$; then any $d$-core is also an $(a,b)$-core, and so
there are no longer finitely many $(a,b)$-cores.

The inequalities given for $\SC_a(b)$ still describe the space of
$(a,b)$-cores when $a,b$ are no longer relatively prime, but these
inequalities no longer describe a simplex.  The inequalities no longer
relate all the $c_i$ to each other; rather, they decouple into $d$ sets of $a/d$ of variables

\begin{align*}
S_0 & =\{c_0, c_d, c_{2d},\dots, c_{a-d}\} \\
 S_1& =\{c_1,c_{d+1},\dots, c_{a-d+1}\} \\
& \quad \quad \cdots \\
S_{d-1}&=\{c_{d-1}, c_{2d-1},\dots, c_{a-1}\} \\
\end{align*}
The charges $c_i$ in a given group must be close together, but for any
vector $(v_0,\dots, v_{d-1})$ with $\sum v_i=0$, we may shift each
element of $S_i$ by $v_i$ and all inequalities will still be satisfied.

These shifts generate a lattice, and the remaining choices of the $c_i$ in each group are related to each other is a polytope, and so the set of $(a,b)$ cores is a finite number of translates of a lattice.  The sum over $Q$ over the points in a lattice will be a theta function, and so we see the generating function of $(a,b)$ cores will be a finite sum of theta functions, and hence modular.

\end{remark}






\subsubsection{Coordinate shift}
In the charge coordinates $\mathbf{c}$, neither the hyperplanes defining the set of $b$ cores nor the quadratic form $Q$ are symmetrical about the origin.  We shift coordintaes to remedy this.
\begin{definition}
Define $\mathbf{s}=(s_0,\dots, s_a)\in V_{a-1}$ by
$$s_i=\frac{i}{a}-\frac{a-1}{2a}$$
\end{definition}

The $i/a$ term ensures $s_{i+1}-s_i=1/a$; subtracting $\frac{a-1}{2a}$ ensures that  $\mathbf{s}\in V_a$, i.e. $\sum s_i=0$.
 
\begin{lemma}
In the shifted charge coordinates
$$x_i=c_i+s_i$$
the inequalities defining the set of $b$ cores become
$$x_{i+b}-x_{i}\leq b/a$$
and the size of an $a$-core is given by
$$Q(\mathbf{x})=-\frac{a^2-1}{24}+\frac{a}{2}\sum_{i=0}^{a-1} x_i^2$$
\end{lemma}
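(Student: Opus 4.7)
The plan is to prove both halves by direct substitution, treating the facet inequality from the previous lemma and the quadratic formula from Theorem \ref{thm:quadratic} as polynomial identities in the charge coordinates $\mathbf{c}$, then rewriting in the shifted variables $x_k = c_k + s_k$.

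For the inequality, I first handle the cyclic indexing. Writing $r_a(i+b) = i+b - a\,q_a(i+b)$, the definition of $s$ gives
$$s_{i+b} - s_i \;=\; \frac{r_a(i+b) - i}{a} \;=\; \frac{b}{a} - q_a(i+b).$$
Substituting $c_k = x_k - s_k$ into the inequality $c_{i+b} - c_i \leq q_a(b+i)$ of the previous lemma, the $q_a(b+i)$ on each side cancels and what remains is exactly $x_{i+b} - x_i \leq b/a$.

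For the quadratic form I expand
$$Q(\mathbf{c}) \;=\; \frac{a}{2}\sum_{k=0}^{a-1} c_k^2 + \sum_{k=0}^{a-1} k c_k$$
with $c_k = x_k - s_k$ and collect by degree in $\mathbf{x}$. The pure quadratic piece is $\tfrac{a}{2}\sum x_k^2$. The linear piece has coefficient $k - a s_k$, and the definition $s_k = k/a - (a-1)/(2a)$ makes this the constant $(a-1)/2$; since $\mathbf{x}$ lies in the hyperplane $\sum x_k = 0$, the linear contribution vanishes identically. What survives is a pure constant, namely $\tfrac{a}{2}\sum s_k^2 - \sum k s_k$.

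The only real work, and the closest thing to an obstacle, is to show that this constant equals $-(a^2-1)/24$. This is a routine computation: apply the standard power-sum formulas for $\sum_{k=0}^{a-1} k$ and $\sum_{k=0}^{a-1} k^2$ to get $\sum s_k^2 = (a^2-1)/(12a)$ and $\sum k s_k = (a^2-1)/12$, and then combine. Everything else is one line of substitution.
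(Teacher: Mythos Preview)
Your proof is correct and follows essentially the same route as the paper: direct substitution for the inequality, and completing the square for $Q$. The only cosmetic difference is that the paper uses $\sum c_k=0$ to rewrite $\sum k c_k = a\sum s_k c_k$ first, so the constant term emerges as $-\tfrac{a}{2}\sum s_k^2$ in one step rather than as your $\tfrac{a}{2}\sum s_k^2 - \sum k s_k$; both evaluate to $-(a^2-1)/24$.
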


\begin{proof}
That the linear term of $Q$ vanishes in the $\mathbf{x}$ coordinates follows immediately from the definition of $\mathbf{s}$.  The constant term of $Q$ in the $\mathbf{x}$ coordinates is $-\frac{a}{2}\sum_{i=0}^{a-1} s_i^2$, which a short computation shows is $-\frac{a^2-1}{24}$.

The statement about the set of $b$-cores follows from the computation

\begin{align*}
x_{i+b}-x_{i}&=c_{i+b}-c_i+s_{i+b}-s_i \\
&\leq  q_a(i+b)+r_a(i+b)/a-i/a \\
&=(b+i)/a-i/a \\
&=b/a
\end{align*}
\end{proof}

Although we often use the $x$ coordinates, to show that the simplex of $\SC_a(b)$ is isomorphic to the simplex $\TD_a(b)$ of trivial determinant representations another change of variables is needed.
 
\begin{lemma} \label{lem:standardsimplex}
Let $a$ and $b$ be relatively prime, and let 
$$k=-\frac{b+1}{2}\pmod a$$
  Then the change of variables
$$z_i=x_{ib+k}-x_{(i+1)b+k}+b/a$$
gives an isomorphism between the rational simplices $\SC_a(b)$ and $\TD_a(b)$. 
\end{lemma}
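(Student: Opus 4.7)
The plan is to verify directly that the proposed affine change of variables $\mathbf{x} \mapsto \mathbf{z}$ identifies the defining data of $\SC_a(b)$ with that of $\TD_a(b)$. Recall that $\TD_a(b)$ is the set of $\mathbf{z} \in \Z^{a}_{\geq 0}$ with $\sum z_i = b$ and $\sum_i i z_i \equiv 0 \pmod a$, so four items must be checked: (i) each $z_i$ is a nonnegative integer, (ii) $\sum z_i = b$, (iii) $\sum_i i z_i \equiv 0 \pmod a$, and (iv) the map is invertible and sends the lattice underlying $\SC_a(b)$ onto that underlying $\TD_a(b)$.

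Items (i), (ii), and (iv) are routine. Substituting $x_j = c_j + s_j$ into $z_i = x_{ib+k} - x_{(i+1)b+k} + b/a$ and using the identity $r_a(j) + b - r_a(j+b) = a \, q_a(r_a(j)+b)$, the fractional parts collapse, giving
\[
z_i = \bigl(c_{ib+k} - c_{(i+1)b+k}\bigr) + q_a\bigl(r_a(ib+k) + b\bigr),
\]
which is an integer; moreover $z_i \geq 0$ is precisely the defining inequality of $\SC_a(b)$ at the index $j = ib+k$, using cyclic indexing. The sum $\sum_i z_i$ cyclically telescopes to $0 + a\cdot(b/a) = b$. Invertibility is immediate: the partial sums of $z_i - b/a$ recover each $x_{ib+k}$ in terms of $x_k$, and the constraint $\sum x_j = 0$ then pins down $x_k$.

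The substantive step---and the one forcing the prescribed value of $k$---is the congruence in (iii). Writing $y_i := x_{ib+k}$, so that $y_a = y_0$ by cyclic indexing, Abel summation yields
\[
\sum_{i=0}^{a-1} i(y_i - y_{i+1}) = \sum_{i=0}^{a-1} y_i - a y_0 = -a x_k,
\]
since $\sum_i y_i = \sum_j x_j = 0$. Combining with $\sum_i i \cdot (b/a) = b(a-1)/2$ and expanding $x_k = c_k + k/a - (a-1)/(2a)$ gives
\[
\sum_{i=0}^{a-1} i z_i = -a c_k - k + \tfrac{(a-1)(b+1)}{2}.
\]
This lies in $a\Z$ exactly when $k \equiv (a-1)(b+1)/2 \equiv -(b+1)/2 \pmod a$, which is precisely the value specified in the lemma. (The half is well defined modulo $a$ because either $b$ is odd, making $(b+1)/2$ an integer, or $a$ is odd, in which case $2$ is invertible mod $a$.) I expect this modular bookkeeping to be the only genuine obstacle; the remaining items are direct manipulations of the affine formulas, and the inverse direction---recovering an integer $\mathbf{c}$ from a lattice point of $\TD_a(b)$---uses the congruence condition on $\sum i z_i$ in exactly the same way, showing the two lattices correspond.
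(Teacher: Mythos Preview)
Your proof is correct and follows essentially the same route as the paper: verify nonnegativity, integrality, and $\sum z_i=b$ directly, then compute $\sum_i i z_i$ via summation by parts to obtain $-ax_k + b(a-1)/2$ and deduce the required congruence from the specified value of $k$, with invertibility left as a straightforward check. Your write-up is in fact more careful than the paper's, supplying the explicit integer form $z_i = (c_{ib+k}-c_{(i+1)b+k}) + q_a(r_a(ib+k)+b)$ and noting why $(b+1)/2$ makes sense modulo $a$.
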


\begin{proof}
It is immediate that the $z_i$ satisfy $\sum z_i=b$ and $z_i\geq 0$.  The integrality of the $z_i$ follows from the fact that the fractional part of $x_i-x_j$ is $(i-j)/a$.  We must show $\sum iz_i=0\mod a$.

One computes:
$$\sum_{i=0}^{a-1} iz_i=-ax_k+\sum_{i=0}^{a-1} x_i+\frac{b}{a}\sum_{i=0}^{a-1} i$$
Since the fractional part of $x_k$ is $s_k=k/a-(a-1)/2a$, plugging in the definition of $k$ gives that $ax_k=-b/2\pmod a$.  Since $\sum x_i=0$ and $\sum i=(a-1)a/2$, we see $\sum iz_i=0\pmod a$.

A further computation shows this change of variables is invertible.
\end{proof}

\begin{corollary}[Anderson \cite{anderson}] \label{cor:anderson} The number of simultaneous $(a,b)$-cores is $\Cat_{a,b}$.
\end{corollary}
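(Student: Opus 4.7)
The plan is to count lattice points in $\TD_a(b)$ directly, using the group action of $\Z_a$ on representations by tensoring with the character $L_1$, and then invoke the isomorphism $\SC_a(b)\cong\TD_a(b)$ from Lemma \ref{lem:standardsimplex}.

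First I would recall the combinatorial model: the lattice points of the standard simplex $b\Delta_{a-1}$ (vectors $z_0,\dots,z_{a-1}\in\N$ with $\sum z_i=b$) are in bijection with $b$-dimensional representations of $\Z_a$, via $V=\bigoplus L_k^{\oplus z_k}$. The total count is the classical stars-and-bars number $\binom{a+b-1}{b}$. The sublattice $\TD_a(b)$ is the subset cut out by $\sum i z_i\equiv 0\pmod a$, i.e.\ the representations with trivial determinant.

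Next I would exhibit a free action of $\Z_a$ on $b\Delta_{a-1}\cap\Z^a$ whose orbits each contain exactly one element of $\TD_a(b)$. The action is the cyclic shift $z_k\mapsto z_{k+1}$ (indices mod $a$), corresponding to $V\mapsto V\otimes L_1$; this changes $\det V$ by tensoring with $L_b$. Because $\gcd(a,b)=1$, the element $L_b$ generates the character group $\Z_a$, so the determinants realized along a single orbit run through all of $L_0,L_1,\dots,L_{a-1}$. In particular, the action permutes the $a$ ``determinant classes'' transitively, so each class has the same size. I also need to check that the action is free: if a shift by $j\in\{1,\dots,a-1\}$ fixed some $z$, then $z$ would be constant on cosets of $\langle j\rangle\subset\Z_a$, forcing $\gcd(a,j)\mid b$; since $\gcd(a,b)=1$ this would force $j\equiv 0\pmod a$, a contradiction.

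Putting it together, the $a$ determinant classes partition the $\binom{a+b-1}{b}$ lattice points of $b\Delta_{a-1}$ into $a$ equal pieces, so
\[
|\TD_a(b)|=\frac{1}{a}\binom{a+b-1}{b}=\frac{1}{a+b}\binom{a+b}{a}=\Cat_{a,b}.
\]
By Lemma \ref{lem:standardsimplex} the lattice points of $\SC_a(b)$ are in bijection with those of $\TD_a(b)$, hence $|\mathcal{C}_{a,b}|=\Cat_{a,b}$. The only genuinely delicate point is verifying freeness of the cyclic action (the rest is bookkeeping), and this is exactly where the hypothesis $\gcd(a,b)=1$ enters in an essential way; the same argument breaks down in the non-coprime case, consistent with the remark earlier that $\mathcal{C}_{a,b}$ is then infinite.
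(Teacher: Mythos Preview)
Your argument is essentially the same as the paper's: use Lemma~\ref{lem:standardsimplex} to reduce to counting points of $\TD_a(b)$, then observe that the cyclic shift (tensoring by $L_1$) permutes the $a$ determinant classes inside $b\Delta_{a-1}$ transitively when $\gcd(a,b)=1$, giving $\frac{1}{a}\binom{a+b-1}{b}=\Cat_{a,b}$.

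One small correction: in your freeness check you write that a fixed point under the shift by $j$ would force $\gcd(a,j)\mid b$, but in fact periodicity forces $a/\gcd(a,j)\mid b$ (each coset of $\langle j\rangle$ has $a/\gcd(a,j)$ elements). With $\gcd(a,b)=1$ this still gives $\gcd(a,j)=a$ and the desired contradiction. More to the point, the separate freeness check is unnecessary: since tensoring by $L_1$ moves the determinant by $L_b$, and $L_b$ generates the character group, the induced action on the set of determinant classes is already a free $\Z_a$-action, which immediately forces freeness upstairs and equidistribution among the classes.
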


\begin{proof} This follows quickly from Lemma \ref{lem:standardsimplex}.

The scaled simplex $b\Delta_a$ has $\binom{a+b-1}{a-1}$
  usual lattice points.  Cyclicly permuting the variables preserves
  $b\Delta_a$ and the standard lattice, and when $b$ is relatively prime to $a$ it cyclicly permutes the $a$ cosets of the charge lattice.   

Thus the standard lattice points in
  $b\Delta_a$ are equidistributed among the $a$-cosets of the
  charge lattice, and hence each one contains $\frac{1}{a}\binom{a+b-1}{a-1}=\Cat_{a,b}$.

\end{proof}

\subsection{The size of simultaneous cores}

We now have all the ingredients needed to prove Armstrong's conjecture.  We derive it as a consequence of:

\begin{theorem} \label{thm:polynomial}
For fixed $a$, and $b$ relatively prime to $a$,  the average size of an $(a,b)$-core is a polynomial of degree 2 in $b$.
\end{theorem}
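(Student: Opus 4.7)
The plan is to deduce this from Euler--Maclaurin theory applied to the quadratic size function $Q$ of Theorem~\ref{thm:quadratic}, summed over the lattice points of the simplex of cores. Via Lemma~\ref{lem:standardsimplex} I identify $\SC_a(b)$ with $\TD_a(b) = b\Delta_{a-1} \cap L'$, where $L' = \{z \in \Z^a : \sum_i iz_i \equiv 0 \pmod{a}\}$ is a fixed index-$a$ sublattice of $\Z^a$. For $b$ running through a fixed residue class modulo $a$, $b\Delta_{a-1}$ is an integer dilation of a fixed lattice polytope with respect to $L'$ (its minimal dilation period is $a$), so Euler--Maclaurin produces honest polynomials in $b$ rather than mere quasipolynomials.

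Pulling $Q$ back through the change of variables of Lemma~\ref{lem:standardsimplex}, each $x_j$ becomes a $\Q$-linear function of $z$ plus a rational multiple of $b$, so
$$\tilde Q(z, b) \;=\; q_2\, b^2 \;+\; b\, q_1(z) \;+\; q_0(z),$$
with $q_2$ a constant, $q_1$ linear in $z$, and $q_0$ quadratic in $z$. Summing termwise over $\TD_a(b)$, Euler--Maclaurin yields polynomials in $b$ of degrees $a-1$, $a$, and $a+1$ respectively, so the total size $T_r(b)$ of $(a,b)$-cores is a polynomial in $b$ of degree $a+1$. The pure $b^2$ piece $q_2 b^2\, |\TD_a(b)| = q_2 b^2 \Cat_{a,b}$ immediately contributes $q_2 b^2$ to the average; since $A_j = (a-1+2j)/(2a)$ in the explicit change of variables and $q_2 = \tfrac{a}{2}\sum_j A_j^2$, this already recovers the expected leading $b^2$ term of Armstrong's formula.

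The main obstacle is to show that the remaining ratio $\bigl(b\sum q_1(z) + \sum q_0(z)\bigr)/\Cat_{a,b}$ is a polynomial in $b$ of degree at most $1$, and that the whole answer does not depend on the coprime residue class of $b$ modulo $a$. I would attack this using the free cyclic $\Z/a$-action on $b\Delta_{a-1} \cap \Z^a$ (free when $\gcd(a,b)=1$, because no integer point has $z_i = b/a$) that permutes the $a$ cosets of $L'$ transitively. Decomposing $q_1$ into its cyclic symmetrisation plus a mean-zero remainder, the symmetrisation collapses on $\{\sum z_i = b\}$ to a scalar multiple of $b$ and hence contributes $\Cat_{a,b}$ times an explicit affine polynomial in $b$; an analogous simplification handles the symmetric part of $q_0$. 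The technical crux, which I would carry out by a vertex-wise Brion--Lawrence--Varchenko analysis at the vertices $be_0,\dots,be_{a-1}$ of $b\Delta_{a-1}$, is to show that the mean-zero remainders sum to $\Cat_{a,b}$ times a polynomial of the required low degree. Residue-class independence should then fall out of the observation that the shift $k(b) = -(b+1)/2 \pmod{a}$ appearing in Lemma~\ref{lem:standardsimplex} enters the change of variables only through a cyclic relabeling of the $z_i$, leaving symmetric sums invariant.
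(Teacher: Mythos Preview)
Your setup is close to the paper's, but your attack on the ``main obstacle'' has a genuine gap, and you are missing the paper's key idea.

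First, a simplification you overlook: since $Q$ is $S_a$-invariant in the $x$-coordinates and a cyclic shift of the $z_i$ corresponds, under the change of variables of Lemma~\ref{lem:standardsimplex}, to a cyclic shift of the $x_j$ (by $b$ steps), the pulled-back $\tilde Q$ is already cyclic-invariant in $z$. Hence your ``mean-zero remainders'' vanish identically, and your proposed Brion--Lawrence--Varchenko analysis is aimed at a non-problem.

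The actual gap is the step you dismiss with ``an analogous simplification handles the symmetric part of $q_0$''. A cyclic-invariant \emph{linear} form on $\{\sum z_i=b\}$ is forced to be a scalar multiple of $b$, but a cyclic-invariant \emph{quadratic} form is not: e.g.\ $\sum_i z_i z_{i+1}$ is cyclic-invariant yet varies over the simplex. So even after averaging you must still show that $\sum_{z\in\TD_a(b)} q_0(z)$ is divisible, as a polynomial in $b$, by $\Cat_{a,b}$, and you have offered no mechanism for this.

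The paper settles this with Ehrhart reciprocity, and never needs to pull back to $z$. Since $Q$ is $S_a$-invariant in $x$, summing $Q$ over $\SC_a(b)\cong\TD_a(b)$ equals $\tfrac{1}{a}$ times its sum $G_a(b)$ over \emph{all} standard lattice points of $b\Delta_{a-1}$; this is a degree-$(a{+}1)$ polynomial in $b$ because $\Delta_{a-1}$ is integral with respect to the full lattice. Likewise $\Cat_{a,b}=\tfrac{1}{a}F_a(b)$ with $F_a$ the Ehrhart polynomial of $\Delta_{a-1}$. Reciprocity says $(-1)^{a-1}F_a(-t)$ counts interior lattice points of $t\Delta_{a-1}$; the first such point is $(1,\dots,1)$ at $t=a$, so $F_a$ vanishes at $-1,\dots,-(a-1)$. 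The Euler--Maclaurin extension of reciprocity gives the same zeros for $G_a$, whence $G_a/F_a$ is a degree-$2$ polynomial. Since this argument works uniformly over the full lattice, the residue-class dependence you worry about never arises.
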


\begin{proof}
For fixed $a$, the number of $a$-cores is $1/a$ times the
number of lattice points in $b\Delta_{a-1}$, which is a polynomial
$F_a(b)$ of degree $a-1$.  In the $x$-coordinates $Q=|\core_a|$
is invariant under $S_a$, and in particular rotation, so the sum of the sizes of all $(a,b)$-cores is $1/a$ times the sum of $Q$ over the lattice points in $b\Delta_{a-1}$.   By Euler-Maclaurin theory, the number of points in $b\Delta_{a-1}$ is a polynomial $G_a(b)$ of degree $a+1$.

Thus, the average value of an $(a,b)$-core is $G_a(b)/F_a(b)$, the quotient of a polynomial of degree $a+1$ in $b$ by a polynomial of degree $a-1$ in $b$.  To show this is a polynomial of degree two in $b$, we need to show that every root of $F_a$ is a root of $G_a$.

Corollary \ref{cor:anderson} says that the roots of $F_a$ are $-1,
-2,\dots, -(a-1)$.  We now give another derivation of this fact, using Ehrhart reciprocity, that easily adapts to shown these are also roots of $G_a$. 

Ehrhart reciprocity says that $F_a(-x)$ is, up to a sign, the number of
points in the \emph{interior} of $x\Delta_{a-1}$.  The interior
consists of the points in $x\Delta_{a-1}$ none of whose coordinates
are zero, and so the first interior point in $x\Delta_{a-1}$ is  $(1,1,\dots,1)\in a\Delta_{a-1}$.   Thus, $F_a(b)$ vanishes at $b=-1,\dots, -(a-1)$, and as it has degree $a-1$ it has no other roots.

Ehrhart reciprocity extends to Euler-Maclaurin theory, to say that up to a sign $Q_a(-x)$ is the sum of $F$ of the interior points of
$x\Delta_{a-1}$. Thus $Q_a(-x)$ also vanishes at
$b=-1,\dots,-(a-1)$, and so $P_a/Q_a$ is a polynomial of degree 2.

\end{proof}

\begin{corollary} \label{cor:averagesize}
When $(a,b)$ are relatively prime, the average size of an $(a,b)$ core is $(a+b+1)(a-1)(b-1)/24$
\end{corollary}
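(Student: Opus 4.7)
The plan is to leverage Theorem~\ref{thm:polynomial} together with the obvious $(a,b) \leftrightarrow (b,a)$ symmetry. Theorem~\ref{thm:polynomial} says that for fixed $a$ the average size $P(a,b)$ of an $(a,b)$-core is a polynomial of degree $2$ in $b$; and since an $(a,b)$-core is literally the same partition as a $(b,a)$-core, we have $P(a,b)=P(b,a)$, so $P$ is also polynomial of degree $2$ in $a$. A brief interpolation argument then upgrades separate polynomiality in each variable to joint polynomiality, so that $P(a,b) \in \mathbb{Q}[a,b]$ is a symmetric polynomial with $\deg_a P, \deg_b P \leq 2$.

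Next I would use the trivial fact that the only $(a,1)$-core is the empty partition (since every nonempty partition has a hook of length $1$), so $P(a,1)=0$, and by symmetry $P(1,b)=0$. This forces $(a-1)(b-1) \mid P(a,b)$, so we may write $P(a,b) = (a-1)(b-1)R(a,b)$ where $R$ is symmetric of degree $1$ in each variable; hence $R(a,b) = p + q(a+b) + r\,ab$ for three scalars $p,q,r$.

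To pin these down I would directly compute $R$ at three coprime pairs. Taking $(a,b)=(2,3)$, $(2,5)$, and $(3,4)$: the $(2,3)$-cores are $\emptyset,(1)$ of average size $1/2$, giving $R(2,3)=1/4$; the $(2,5)$-cores are $\emptyset,(1),(2,1)$ of average size $4/3$, giving $R(2,5)=1/3$; and the $(3,4)$-cores are $\emptyset,(1),(2),(1,1),(3,1,1)$ of sizes $0,1,2,2,5$ respectively, giving average $2$ and $R(3,4)=1/3$. Solving the resulting linear system yields $p=q=1/24$ and $r=0$, so $R(a,b)=(a+b+1)/24$ and the claimed formula follows. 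The only mildly delicate step will be the upgrade from separate to joint polynomiality, which needs a standard interpolation argument; the $(3,4)$ enumeration is cleanest via the signed abacus from Section~\ref{sec:abacus}.
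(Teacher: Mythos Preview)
Your argument is correct and genuinely different from the paper's. The paper never passes to a two-variable polynomial: it stays with $a$ fixed and uses Ehrhart reciprocity to locate the second root of the quadratic $P_a(b)$ at $b=-a-1$ (via the bijection between interior points of $(a+b)\Delta_{a-1}$ and points of $b\Delta_{a-1}$), and then pins down the leading coefficient by evaluating $Q$ at the origin to get $P_a(0)=-(a^2-1)/24$. Your route replaces this structural input with the $(a,b)\leftrightarrow(b,a)$ symmetry and a finite computation, which is appealingly elementary. The one point that deserves more than the phrase ``standard interpolation'' is the upgrade to joint polynomiality: the usual argument (recover the coefficients $c_j(a)$ of $P_a(b)=\sum c_j(a)b^j$ from finitely many values of $b$) requires evaluating at a fixed set of $b$'s for all $a$, but here you only control $P_a(b)$ at $b$ coprime to $a$, so no single triple of $b$'s works uniformly. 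You can rescue it by taking several triples (say $(1,2,3)$, $(1,3,5)$, $(1,5,7)$, \ldots), showing the resulting polynomials in $a$ agree on their common (infinite) domains and hence coincide, and then covering every $a$ by some triple; but this is a real argument, not a one-liner. Once that is in hand, your factorization $P=(a-1)(b-1)R$ with $R=p+q(a+b)+rab$ and the evaluations at $(2,3),(2,5),(3,4)$ are all correct and give the result.
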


\begin{proof}

Fix $a$, and let $P_a(b)=G_a(b)/F_a(b)$ be the degree two polynomial that
gives the average value of the $(a,b)$-cores when $a$ and $b$ are
relatively prime.  As we know $P_a(b)$ is a polynomial of degree 2, we can determine it by computing only three values.

First, we find the two roots of $P_a(b)$.  As the only $1$ core is the empty partition, we have $F_a(1)=1$ and $G_a(1)=0$, and so $P_a(1)=0$.

Ehrhart reciprocity gives that $G_a(-a-b)$ is, up to a sign, the sum of $Q$ over the lattice points in the interior of $(a+b)\Delta_a$, which are just the lattice points contained in $b\Delta_a$, and hence equal to $G_a(b)$.  In particular, $P_a(-a-1)=0$.  

Finally, we compute $P_a(0)$.  It is clear that $\mathcal{S}_a(0)=\{0\}$.  Although this is not a point of $\Lambda_a$, it is in $\Lambda_a^\prime$, and so $P_a(0)=Q(0)=-(a^2-1)/24$.
\end{proof}

\subsection{Self-conjugate \texorpdfstring{$(a,b)$}{(a,b)}-cores} \label{sec:conjugate}

In Lemma \ref{lem:conjugatedual}, we show that under the bijection between $(a,b)$-cores and $b$-dimensional representations of $\Z_a$ with trivial determinant, conjugating a partition corresponds to sending a representation $V$ to its dual $V^*$.  In the lattice point of view, this is a linear map $T$, and hence the self-dual $(a,b)$-cores correspond to the lattice points in the fixed point locus of $T$.  

We show in Lemma \ref{lem:conjugatecount}, that the $T$-fixed lattice points in $\SC_a(b)$ are the lattice points in the $\lfloor a/2\rfloor$ dimensional simplex $\lfloor b/2\rfloor \Delta_{\lfloor a/2\rfloor}$, hence rederiving the count of simultaneous $(a,b)$-core partitions.  

Once we have done this, an analogous application of Euler-Maclaurin theory reproves the statement about the average value.

Let $T:V_a\to V_a$ be the linear map given by
$$T(c_i)=-c_{-1-i}$$
It is easy to check that when translated to core partitions, $T$ corresponds to taking the conjugate, that is: $$\core_a(c)^T=\core_a(T(c))$$
Thus the set of self-conjugate $(a,b)$-cores is the $T$ fixed locus of $\SC_a(b)$.  

Since $T(s)=s$, the same formula holds in the shifted coordinates $\bf{x}$.

\begin{lemma} \label{lem:conjugatedual}
Under the isomorphism between $\SC_a(b)$ and $\TD_a(b)$ established in Lemma \ref{lem:standardsimplex}, the map $T$ sending a partition to its conjugate corresponds to taking the dual $\Z_a$ representation.
\end{lemma}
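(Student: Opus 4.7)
The plan is to verify this by direct computation using the explicit change of variables in Lemma \ref{lem:standardsimplex}, combined with the simple fact that the dual representation reverses the indexing of the isotypic components.

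First I would translate both sides of the claim into coordinates. The dual of $L_j$ is $L_{-j}$, so on multiplicities $V = \bigoplus L_j^{\oplus z_j}$ the dual representation acts by $z_i \mapsto z_{-i}$ (indices cyclic mod $a$). On the other side, since $T(\mathbf{s}) = \mathbf{s}$, the conjugation map acts in the shifted $\mathbf{x}$-coordinates by the same formula as on charges, namely
$$T(x_i) = -x_{-1-i}$$
with indices taken cyclically mod $a$. Thus the claim reduces to showing that the change of variables $z_i = x_{ib+k} - x_{(i+1)b+k} + b/a$ intertwines $T$ with the index-reversal $z_i \mapsto z_{-i}$.

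Next I would simply apply $T$ to the defining formula for $z_i$:
$$T(z_i) = -x_{-1-ib-k} + x_{-1-(i+1)b-k} + b/a = x_{-1-(i+1)b-k} - x_{-1-ib-k} + b/a.$$
Comparing with $z_{-i} = x_{-ib+k} - x_{(1-i)b+k} + b/a$, it suffices to check the two congruences
$$-1 - (i+1)b - k \equiv -ib + k \pmod{a}, \qquad -1 - ib - k \equiv (1-i)b + k \pmod{a}.$$
Both simplify to $2k \equiv -(b+1) \pmod{a}$, which is exactly the defining condition on $k$ in Lemma \ref{lem:standardsimplex}. Hence $T(z_i) = z_{-i}$, as desired.

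The main obstacle is purely bookkeeping: keeping the cyclic indexing straight and remembering why $T$ has the same formula in the $\mathbf{x}$- and $\mathbf{c}$-coordinates. Once one writes the formula for $T(z_i)$ and invokes the congruence $2k \equiv -(b+1) \pmod a$, the conclusion is automatic; the apparently mysterious shift $k$ in the definition of the isomorphism is precisely the shift that makes conjugation correspond to dualization.
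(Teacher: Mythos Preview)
Your proof is correct and follows essentially the same route as the paper: both compute $T(z_i)$ directly from the formula $T(x_i)=-x_{-1-i}$ in the shifted coordinates and reduce the identification $T(z_i)=z_{-i}$ to the congruence $2k\equiv -(b+1)\pmod a$ defining $k$. The only cosmetic difference is that the paper rewrites the indices by subtracting $b+1+2k$ rather than phrasing it as two congruences.
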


\begin{proof}
We want to show $T(z_i)=z_{-i}$.  We compute:
\begin{align*}
T(z_i)&=T(x_{ib+k}-x_{(i+1)b+k}) \\
&=-x_{-ib-k-1}+x_{-ib-b-k-1} \\
& =x_{-ib+k-(b+1+2k)}-x_{(-i+1)b+k-(b+1+2k)}
\end{align*}
And so we need $b+1+2k=0\pmod a$, but this is exactly the definition of $k$ in Lemma \ref{lem:standardsimplex}.
\end{proof}

\begin{lemma} \label{lem:conjugatecount}
The number of $b$-dimensional, self-dual $\Z_a$ representations with trivial determinant is $$\binom{\big\lfloor\frac{a}{2}\big\rfloor+\big\lfloor\frac{b}{2}\big\rfloor}{\big\lfloor\frac{a}{2}\big\rfloor}$$
\end{lemma}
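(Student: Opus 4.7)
By Lemma \ref{lem:conjugatedual}, duality of the $\Z_a$-representation $V=\bigoplus L_k^{\oplus z_k}$ acts on the $z$-coordinates by $z_i\mapsto z_{-i}=z_{a-i}$ (cyclic indexing). Hence the self-dual representations in $\TD_a(b)$ are the $(z_0,\dots,z_{a-1})$ with $z_i\geq 0$, $\sum z_i=b$, $\sum i z_i\equiv 0\pmod a$, and the palindrome condition $z_i=z_{a-i}$. My plan is to feed the palindrome condition into both the size and determinant constraints, split on the parity of $a$, and observe that in each case the lattice points we are counting are exactly those of a simplex of dimension $\lfloor a/2\rfloor$ and scale $\lfloor b/2\rfloor$, which has the claimed number of lattice points.

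Suppose first that $a=2m+1$ is odd. The palindrome condition pairs $z_i$ with $z_{a-i}$ for $i=1,\dots,m$ and leaves $z_0$ free, so the free variables are $z_0,z_1,\dots,z_m$. A direct computation using $z_i=z_{a-i}$ collapses the determinant sum:
\[
\sum_{i=0}^{a-1} iz_i = \sum_{i=1}^{m} iz_i + \sum_{i=1}^{m}(a-i)z_i = a\sum_{i=1}^{m} z_i,
\]
which is automatically $0\pmod a$. The only remaining constraint is $z_0+2(z_1+\cdots+z_m)=b$. Writing $z_0=b-2k$ with $k=z_1+\cdots+z_m$, the count becomes
\[
\sum_{k=0}^{\lfloor b/2\rfloor}\binom{k+m-1}{m-1}=\binom{\lfloor b/2\rfloor+m}{m},
\]
which is the desired binomial since $m=\lfloor a/2\rfloor$.

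Now suppose $a=2m$ is even; the free variables are $z_0,z_1,\dots,z_m$ (both $z_0$ and $z_m$ are now self-paired). The same reflection trick gives
\[
\sum_{i=0}^{a-1} iz_i = mz_m + 2m\sum_{i=1}^{m-1} z_i = m\bigl(z_m+2(z_1+\cdots+z_{m-1})\bigr) = m(b-z_0),
\]
using $z_0+z_m+2\sum_{i=1}^{m-1}z_i=b$. The condition $m(b-z_0)\equiv 0\pmod{2m}$ is precisely $z_0\equiv b\pmod 2$. So I parameterize by setting $z_m=2\ell$ with $\ell\geq 0$ (this is forced by the parity once $z_0\equiv b\pmod 2$ is combined with the sum relation), and then $z_0=b-2\ell-2(z_1+\cdots+z_{m-1})\geq 0$ reduces to $\ell+z_1+\cdots+z_{m-1}\leq \lfloor b/2\rfloor$. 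Counting nonnegative integer tuples in $m$ variables summing to at most $\lfloor b/2\rfloor$ gives $\binom{\lfloor b/2\rfloor+m}{m}$, again matching the claim.

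The one subtlety I anticipate is book-keeping in the even case: establishing that the determinant condition forces $z_m$ even (and not some other congruence) and verifying that the reparameterization by $(\ell, z_1,\dots,z_{m-1})$ is a bijection onto a standard simplex. Once that is done the identification with lattice points of $\lfloor b/2\rfloor \Delta_{\lfloor a/2\rfloor}$ is mechanical.
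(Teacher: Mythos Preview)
Your proof is correct and follows essentially the same route as the paper: split on the parity of $a$, impose the palindrome condition $z_i=z_{a-i}$, observe that the determinant condition is vacuous when $a$ is odd and forces $z_m$ even when $a=2m$, and then identify the remaining constraints with the lattice points of $\lfloor b/2\rfloor\,\Delta_{\lfloor a/2\rfloor}$. In fact you spell out the determinant computation more explicitly than the paper does (the paper simply asserts that trivial determinant is equivalent to the middle coordinate being even in the $a=2m$ case), and your flagged ``subtlety'' is exactly the step the paper leaves to the reader; the equivalence $z_0\equiv b\pmod 2 \iff z_m\equiv 0\pmod 2$ follows immediately from $z_0+z_m\equiv b\pmod 2$.
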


\begin{proof}
  Let $a=2k$ or $2k+1$.  We give a bijection between the representations in question and $k$-tuples of non-negative integers $(z_1,\dots, z_k)$ with $2\sum z_i\leq b$.  The set of such $z_i$ are the lattice points in $\lfloor b/2\rfloor\Delta_{\lfloor a/2\rfloor}$, which are counted by the given binomial coefficient.

First, suppose that $a=2k+1$.  Then the only irreducible self-conjugate representation is the identity, and $T$ has a $k$ dimensional fixed point set consisting of points of the form $(u_0, u_1,\dots, u_k,u_k,\dots, u_1)$.  Thus, $\sum_{i=1}^k 2u_k\leq b$, and value of $u_0$ is fixed by $u_0+2\sum_{i=1}^k u_i=b$.

When $a=2k$, there are two irreducible self-conjugate representations, the identity and the sign representation induced by the surjection $\Z_a\to\Z_2$.  Again, $T$ has a $k$ dimensional fixed point set, this time consisting of points of the form $(u_0, u_1,\dots u_{k-1}, w_{k}, u_{k-1},\dots,u_1)$.  Now for such a representation, having trivial determinant is equivalent to $w_k$ being even, say $w_k=2u_k$.  Then again we have $\sum_{i=1}^k 2u_k\leq b$, with $u_0$ being determined by $u_0+2\sum_{i=1}^{k}u_i=b$. 
\end{proof}

\begin{proposition} \label{prop:conjugatesize}
Let $a$ and $b$ be relatively prime.  Then the average size of a self-conjugate $(a,b)$-core is $(a-1)(b-1)(a+b+1)/24$.
\end{proposition}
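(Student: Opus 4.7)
The plan is to mirror the strategy of Theorem \ref{thm:polynomial} and Corollary \ref{cor:averagesize}, now working inside the $T$-fixed subspace of $\SC_a(b)$ constructed in Section \ref{sec:conjugate}. Since $Q(\mathbf{x}) = -(a^2-1)/24 + (a/2)\sum x_i^2$ is invariant under $T$, it restricts to a quadratic function $\tilde Q$ on the fixed subspace, and by Lemma \ref{lem:conjugatecount} the self-conjugate $(a,b)$-cores correspond bijectively to the lattice points of the standard simplex $\lfloor b/2\rfloor\, \Delta_{\lfloor a/2\rfloor}$, with $u_0 = b - 2\sum_{i\ge 1}u_i$ determined by the free coordinates $u_1,\ldots,u_{\lfloor a/2\rfloor}$.

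Fix a parity class of $b$ coprime to $a$; within this class $\lfloor b/2\rfloor$ is linear in $b$, and $\tilde Q$ has joint degree $2$ in the $u_i$ and $b$. Hence by Ehrhart and the polynomial version of Euler--Maclaurin, the count $N_a(b)$ and the sum of sizes $S_a(b) = \sum \tilde Q$ are polynomials in $b$ of degrees $\lfloor a/2\rfloor$ and $\lfloor a/2\rfloor + 2$, respectively. Ehrhart reciprocity, applied coefficient-by-coefficient to $\tilde Q$ as a polynomial in the $u_i$, shows that both polynomials vanish exactly when $\lfloor b/2\rfloor \in \{-1,\ldots,-\lfloor a/2\rfloor\}$, since the corresponding dilates of $\Delta_{\lfloor a/2\rfloor}$ have no interior lattice points. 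These exhaust the $\lfloor a/2\rfloor$ roots of $N_a$, so $P_a(b) = S_a(b)/N_a(b)$ is a polynomial of degree $2$ in $b$ within each parity class.

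This quadratic is pinned down by three evaluations, mirroring Corollary \ref{cor:averagesize}: $P_a(1) = 0$ since the only self-conjugate $(a,1)$-core is the empty partition; $P_a(0) = \tilde Q(\mathbf{0}) = -(a^2-1)/24$, the value at the single origin lattice point, matching $(a+1)(a-1)(-1)/24$; and $P_a(-a-1) = 0$, because Euler--Maclaurin reciprocity expresses $S_a(-a-1)$ as $(-1)^{\lfloor a/2\rfloor}\tilde Q(-\mathbf{1};\, -(a+1))$ at the unique interior lattice point of $(\lfloor a/2\rfloor+1)\Delta_{\lfloor a/2\rfloor}$, and a direct computation of $\tilde Q$ at this reciprocal point shows it vanishes. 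The main obstacle is the parity subtlety: for $a$ odd the three natural evaluation points $\{1, 0, -(a+1)\}$ split between the odd- and even-$b$ parity classes, so the quadratic in each class must be pinned down separately — either by applying reciprocity at additional negative $b$-values to obtain further vanishings within each class, or by verifying one small direct case in each class.
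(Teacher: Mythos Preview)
Your framework matches the paper's: restrict $Q$ to the $T$-fixed locus, identify self-conjugate cores with lattice points in $\lfloor b/2\rfloor\,\Delta_{\lfloor a/2\rfloor}$, and use Euler--Maclaurin plus reciprocity to see that the average size is quadratic in $b$ within each parity class. The gap is exactly where you flag it as an ``obstacle'': you do not actually resolve the parity split, and your proposed evaluation at $b=-a-1$ does not help. The claim that $\tilde Q$ vanishes at the single interior point of the reciprocal simplex is not justified (and there is no reason it should); in the all-cores case the vanishing at $-a-1$ came from the \emph{functional equation} $G_a(-a-b)=\pm G_a(b)$, not from a pointwise evaluation. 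Here that functional equation lands you in the opposite parity class, so it cannot pin down a single-parity quadratic on its own.

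The paper closes this gap with two observations you are missing. First, after reducing to $a$ odd (WLOG since $\gcd(a,b)=1$), the reciprocity $b\leftrightarrow -a-b$ \emph{interchanges} the two parity classes, so the odd-$b$ and even-$b$ quadratics determine each other; it suffices to find three independent values across the pair. Second, the needed extra value comes for free from $b=2$: every $2$-core is self-conjugate, so the self-conjugate average at $b=2$ equals the already-known average $(a-1)(b-1)(a+b+1)/24$. Together with $b=1$ (same reason) and $b=0$ (your argument), this determines both quadratics. Your suggestion to ``verify one small direct case in each class'' is morally this, but the specific fact that all $1$- and $2$-cores are self-conjugate is what makes it go through without any computation.
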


\begin{proof}

Since $a$ and $b$ are relatively prime, at most one is even, so we may assume $a$ is odd.  

The proof is essentially the same as that for all $(a,b)$-cores.  One complication is that it seems we must treat odd and even values of $b$ separately.   In each each case, an argument identical to Lemma \ref{thm:polynomial} gives that the average size is a polynomial of degree 2 in $b$.  A priori, we may have different polynomials for $b$ odd and $b$ even; however, the symmetry $(a,b)\leftrightarrow (a, -a-b)$ coming from Ehrhart reciprocity still holds and interchanges odd and even values of $b$, and so if we can compute three values of either polynomial (that don't get identified by this symmetry), we identify both polynomials.

All $1$ and $2$ cores are self conjugate, and thus if $b$ is 1 or 2, the average value is the same.  The arguments made in Corollary \ref{cor:averagesize} for the value of the polynomial at $b=0$ holds for self-conjugate partitions as well, giving a third value.
\end{proof}

\section{Toward \texorpdfstring{$q$}{q}-analogs} \label{sec:qcat}
In this section, we apply our lattice point and simplex point of view on simultaneous cores to the $q$-analog of rational Catalan numbers; the next section approaches $(q,t)$-analogs.

\subsection{\texorpdfstring{$q$}{q}-numbers}

Recall the standard $q$ analogs of $n$, $n!$ and $\binom{n}{k}$:
\begin{align*}
[n]_q&=1+q+q^2+\cdots+q^{n-1}=\frac{1-q^n}{1-q} \\
[n]_q!&=[n]_q[n-1]_q\cdots [2]_q[1]_q \\
{n \brack k}_q&=\frac{[n]_q!}{[k]_q![n-k]_q!}
\end{align*}
These three functions are polynomials with positive integer coefficients, i.e., they are elements of $\N[q]$.

The $q$ rational Catalan numbers are given by the obvious formula:
\begin{definition}
$$\Cat_{a,b}(q)=\frac{1}{[a+b]_q} {a+b \brack a}_q=
\frac{(1-q^{b+1})(1-q^{b+2})\cdots (1-q^{b+a-1})}{(1-q^2)(1-q^3)\cdots (1-q^a)}$$
\end{definition}

\subsection{Graded vector spaces}

One place $q$ analogs occur naturally is in graded vector spaces.
\begin{definition}
If $V$ is a graded vector space, with $V_k$ denoting the weight $k$ subspace of $V$, we define $$\dim_qV=\sum_{k\in \N} q^k\dim V_k.$$
\end{definition}

\begin{proposition}
Let $p_i$ be a variable of weight $i$, then $\C[p_1,\dots, p_n]$ has
finite dimensional graded pieces, and
$$\dim_q \C[p_1,\dots, p_n]=\frac{1}{(1-q)(1-q^2)\cdots(1-q^n)}$$

If $V$ is a vector space with $\dim_qV=[n]_q$, then
$$\dim_q \Sym^b V={n+b-1 \brack n-1}_q$$
\end{proposition}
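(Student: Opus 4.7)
The plan is to prove both claims directly, the first by factoring the polynomial ring and the second by a bijection with partitions in a rectangle.

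For the first claim, I observe that there is an isomorphism of graded vector spaces
$$\C[p_1,\dots,p_n] \cong \C[p_1] \otimes \C[p_2] \otimes \cdots \otimes \C[p_n].$$
Each factor $\C[p_i]$ has basis $\{1, p_i, p_i^2, \dots\}$ with $p_i^k$ of weight $ki$, so $\dim_q \C[p_i] = \sum_{k\geq 0} q^{ki} = 1/(1-q^i)$. Finite-dimensionality of each graded piece on the left follows because the monomials $p_1^{e_1}\cdots p_n^{e_n}$ of weight $k$ are in bijection with partitions of $k$ whose parts have size at most $n$, of which there are only finitely many. Since $\dim_q$ is multiplicative on tensor products of graded vector spaces with finite-dimensional graded pieces, the product formula follows by multiplying the $n$ factors $1/(1-q^i)$.

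For the second claim, I would choose a homogeneous basis $v_0, v_1, \dots, v_{n-1}$ of $V$ with $v_i$ of weight $i$; this is possible because $\dim_q V = [n]_q = 1 + q + \cdots + q^{n-1}$. Then $\Sym^b V$ has a basis consisting of the degree-$b$ monomials in the $v_i$, and each monomial $v_0^{e_0} v_1^{e_1} \cdots v_{n-1}^{e_{n-1}}$ has weight $\sum_{i=0}^{n-1} i e_i$. Hence
$$\dim_q \Sym^b V = \sum_{\substack{e_0+\cdots+e_{n-1}=b \\ e_i \geq 0}} q^{\,e_1 + 2e_2 + \cdots + (n-1)e_{n-1}}.$$
The key step is to recognize this as ${n+b-1 \brack n-1}_q$. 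I would set up a weight-preserving bijection between weak compositions $(e_0,\dots,e_{n-1})$ of $b$ and partitions fitting in an $(n-1)\times b$ rectangle, sending $(e_0,\dots,e_{n-1})$ to the partition with exactly $e_i$ parts equal to $i$ (the $e_0$ zero-parts serve merely to pad to a total of $b$ parts). The size of this partition equals $\sum i e_i$, matching the monomial weight.

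The only piece of external content is the standard identity that ${n+b-1 \brack n-1}_q$ is the generating function by size for partitions in an $(n-1)\times b$ rectangle. This is the main (though routine) combinatorial input, and can be proved by induction on $b$ using the $q$-Pascal recursion ${m \brack k}_q = {m-1 \brack k-1}_q + q^k {m-1 \brack k}_q$, splitting partitions according to whether the first column of the rectangle is fully used. Since there is no genuine obstacle beyond this standard identity, both parts of the proposition follow.
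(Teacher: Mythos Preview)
Your proof is correct. The paper does not actually give a proof of this proposition; it states the two formulas as standard facts and then, in the paragraph following the proposition, offers a geometric interpretation: monomials in $\C[p_1,\dots,p_n]$ are lattice points in an $n$-dimensional unimodular cone, monomials in $\Sym^b V$ are lattice points in $b\Delta_{n-1}$, and the $q$-grading records the weight of each lattice point along the primitive ray directions.

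Your argument is the standard direct one: factor the polynomial ring as a tensor product for the first formula, and for the second set up the bijection between monomials of degree $b$ and partitions in an $(n-1)\times b$ rectangle, invoking the well-known interpretation of the Gaussian binomial as the rectangle generating function. This is a complete formal proof where the paper is content to gesture at the lattice-point picture; conversely, the paper's interpretation foreshadows the Lawrence--Varchenko viewpoint used later, which your argument does not. Both are valid routes to an elementary and well-known result.
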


These statements can be interpreted geometrically in terms of lattice
points   The monomials in $\C[p_1,\dots, p_n]$ correspond to the lattice points in an $n$ dimensional unimodular cone; the monomials in $\Sym^b V$ correspond to lattice points in the scaled standard simplex $b\Delta_{a-1}$; the $q$-analogs of the statements listed above are $q$ counting the lattice points, where the weights of the $i$th primitive lattice vector on the ray of the cone has weight $q^i$.

\begin{example}

The following diagram illustrates ${b+a-1 \brack a-1}_q$ as $q$-counting
  standard lattice points in $b\Delta_{a-1}$ in the case $a=3$ and
  $b=4$.   Letting $b$ go to infinity corresponds to extending
  the arrows and the lattice points between them infinitely far to the
  upper right, showing that $\prod_{k=1}^{a-1} \frac{1}{1-q^k}$
  $q$-counts the points in a standard cone.
\begin{center}
\begin{tikzpicture}

\filldraw (0,0)  circle(.03) node[above] {$1$};
\filldraw (1,0) circle(.03) node[above] {$q$};
\filldraw (2,0) circle(.03) node[above] {$q^2$};
\filldraw (3,0) circle(.03) node[above] {$q^3$};
\filldraw (4,0) circle(.03) node[above] {$q^4$};

\filldraw (.5, 1.732058/2) circle(.03) node[above] {$q^2$};
\filldraw (1.5, 1.732058/2) circle(.03) node[above] {$q^3$};
\filldraw (2.5, 1.732058/2) circle(.03) node[above] {$q^4$};
\filldraw (3.5, 1.732058/2) circle(.03) node[above] {$q^5$};

\filldraw (1, 1.732058) circle(.03) node[above] {$q^4$};
\filldraw (2, 1.732058) circle(.03) node[above] {$q^5$};
\filldraw (3, 1.732058) circle(.03) node[above] {$q^6$};

\filldraw (1.5, 1.5*1.732058) circle(.03) node[above] {$q^6$};
\filldraw (2.5, 1.5*1.732058) circle(.03) node[above] {$q^7$};

\filldraw (2, 2*1.732058) circle(.03) node[above] {$q^8$};

\draw[-triangle 45] (0,-.5)-- node[below] {$\cdot q$} (4, -.5);
\draw[-triangle 45] (-.5, .25*1.732058)--node[above]{$\cdot
  q^2$}  (1.5, 2.25*1.732058);

\end{tikzpicture}
\end{center}
\end{example}

\subsection{A \texorpdfstring{$q$}{q}-version of cone decompositions}

\subsubsection{Lawrence Varchenko}
Recall the decomposition of a simplicial polytope $\mathcal{P}$ in a vector space $V$ of dimension $n$ as a signed sum of cones based at their vertices, called the Lawrence-Varchenko decomposition:

\begin{center}
  \begin{tikzpicture}
  \draw[fill=blue!20] (0,0) node [below left] {$A$} --(1,0) node [below right] {$B$}--(2,1) node [above right] {$C$}--(0,1) node [above left] {$D$}--cycle;
  \draw[->] (0,0)-- node [above] {$v$} (1.1,.7);

\draw (2.5,.5) node {$=$};

\begin{scope}[xshift=3cm]
    \draw (0,0) node [below left] {$A$};
    \draw[-triangle 45] (0,0)-- (0,1);
    \draw[-triangle 45] (0,0)-- (1, 0);
\end{scope}

\begin{scope}[xshift=5cm]
    \draw (-.6,.5) node {$-$};
    \draw (0,0) node [below left] {$B$};
    \draw[-triangle 45] (0,0)-- (1,0);
    \draw[-triangle 45] (0,0)-- (1, 1);
\end{scope}

\begin{scope}[xshift=7cm]
    \draw (-.6,.5) node {$+$};
    \draw (0,0) node [below left] {$C$};
    \draw[-triangle 45] (0,0)-- (1,0);
    \draw[-triangle 45] (0,0)-- (1, 1);
\end{scope}

\begin{scope}[xshift=9cm]
    \draw (-.6,.5) node {$-$};
    \draw (0,0) node [below left] {$D$};
    \draw[-triangle 45] (0,0)-- (1,0);
    \draw[-triangle 45] (0,0)-- (0, 1);
\end{scope}

  \end{tikzpicture}

\end{center}

First, pick a generic direction vector $v\in V$.
At each vertex $v_i$, $n$ facets of $\mathcal{P}$ meet; if we extend these
facets to hyperplanes, they cut $V$ into orthants.  Let
$\mathcal{C}_i$ be the orthant at $v_i$ that contains our direction
vector $v$.  Let $f_i$ be the number of hyperplanes that must be crossed to get from $\mathcal{C}_i$ to $\mathcal{P}$.

Then:
$$\mathcal{S}=\sum_{i=0}^{k} (-1)^{f_i} \mathcal{C}_i$$

To deal correctly with the boundary of $P$, one must correctly include or exclude portions of the boundary of $\mathcal{C}_i$, but this subtlety won't matter to us.

\subsubsection{}

The algebraic structure of $\binom{b+a-1}{a-1}$ suggests  a refinement of the Lawrence-Varchenko decomposition of $b\Delta_{a-1}$ for $q$-counting the lattice points.

Expanding the numerator of $\binom{a+b-1}{a-1}$ as $(1-q^{b+1})\cdots (1-q^{b+a-1})$ there are $\binom{a-1}{k}$ terms obtained from choosing $1$ from $n-k$ factors and $q^M$ from $k$ factors. Each such term has sign $(-1)^k$, and the exponent of $q$ is slightly larger than $kb$.  We interpret these $\binom{a-1}{k}$ terms as making up the polarized tangent cone at the $k$th vertex.

The polarized tangent cone at the $k$th vertex $v_k$ does not carry the standard $q$-grading.  However, it appears the cone at $v_k$ may be subdivided into $\binom{a-1}{k}$ smaller cones that do have the standard $q$-grading, essentially by intersecting with the $A_{a-1}$ hyperplane arrangement translated to $v_k$.

\begin{example} We illustrate the decomposition of $b\Delta_2$ suggested by
$${b+2 \brack 2}_q=\frac{1}{(1-q)(1-q^2)}\left(1-q^{b+1}-q^{b+2}+q^{2b+3}\right)$$

\begin{tikzpicture}

\clip (-1.5, -2.5)--(-1.5, 8.5)--(8.5, 8.5)--(8.5, -2.5)--cycle;
\fill[blue!20] (0,0)--(3.5,0)--(3.5/2,3.5*1.7320508/2)--cycle;
\fill[red!20] (4,0)--(2,4*1.7320508/2)--(4.25,8.5*1.7320508/2)--(8.25, 8.5*1.732508/2)--cycle;
\fill[orange!40] (5,0)--(8.5,0)--(8.5,7*1.7320508/2)--cycle;
\fill[green!20]  (3/2,5*1.7320508/2)-- (3/2-3.5/2,8.5*1.7320508/2) --(3/2+3.5/2,8.5*1.7320508/2) --cycle;

\foreach \x in {-7,-6,...,10}{
    \foreach \y in {-7,-6,...,10}{
        \filldraw (\x+\y/2,1.7320508*\y/2) circle(.05);}}

\draw[dashed] (0,0)--(3.5,0);
\draw[dashed] (0,0)--(3.5/2,3.5*1.7320508/2);
\draw (1/2, 1.7320508/6) node {$+$};
\filldraw[blue] (0,0) circle(.05) node[below]{$1$};
\draw[-triangle 45] (0,0)--(.5,0) node[below]{$\cdot q$};
\draw[-triangle 45] (0,0)--(.25,1.7320508/4) node[left]{$\cdot q^2$};

\draw[dashed] (4,0)--(4-3.5/2,3.5*1.7320508/2);
\draw[dashed] (4,0)--(4+3.5/2,3.5*1.7320508/2);
\draw (4,1.7320508/3) node {$-$};

\draw[-triangle 45] (4,0)-- (4-.25,1.7320508/4) node[left]{$\cdot q$};
\draw[-triangle 45] (4,0)--(4.25,1.7320508/4) node[right]{$\cdot q^2$};

\filldraw (3,0) circle(.05) node[below]{$q^{b}$};
\filldraw[red] (4,0) circle(.05) node[below]{$q^{b+1}$};

\draw[dashed] (5,0)--(8.5,0);
\draw[dashed] (5,0)--(5+3.5/2,3.5*1.7320508/2);
\draw (5.5, 1.7320508/6) node {$-$};
\draw[dashed, -triangle 45] (5,0)--(5.5,0) node[below]{$\cdot q$};
\draw[dashed, -triangle 45] (5,0)--(5.25,1.7320508/4) node[above]{$\cdot q^2$};

\filldraw[orange] (5,0) circle(.05) node[below]{$q^{b+2}$};

\draw[dashed] (3/2,5*1.7320508/2)--(3/2-3.5/2,8.5*1.7320508/2);
\draw[dashed] (3/2,5*1.7320508/2)--(3/2+3.5/2,8.5*1.7320508/2);

\draw (3/2, 17*1.7320508/6) node {$+$};

\draw[dashed, -triangle 45] (3/2,5*1.7320508/2)--(1.25,5.5*1.7320508/2) node[left]{$\cdot q$};
\draw[dashed, -triangle 45] (3/2,5*1.7320508/2)--(1.75,5.5*1.7320508/2) node[right]{$\cdot q^2$};

\filldraw (3/2,3*1.7320508/2) circle(.05) node[left]{$q^{2b}$};

\filldraw[green] (3/2,5*1.7320508/2) circle(.05) node[below]{$q^{2b+3}$};

\end{tikzpicture}

\end{example}

Together with $\Cat_{a,b}=\dim_\C(\Sym^b\C[\Z_a])^{\Z_a}$, one
might hope that we could give $\C[\Z_a]$ a grading so that we have
$$\Cat_{a,b}(q)=\dim_q(\Sym^b \C[\Z_a])^{\Z_a}$$
This naive hope does not appear possible.  However, we now describe a
conjectural weakening of it.

\subsection{Sublattices and shifting}

We begin by rewriting $\Cat_{a,b}(q)$.  Since $[a]_{q^k}=(1-q^{ak})/(1-q^k)$, we have

\begin{align*}
\Cat_{a,b}(q)&=\frac{(1-q^{b+1})(1-q^{b+2})\cdots (1-q^{b+a-1})}{(1-q^2)\cdots(1-q^a)} \cdot \frac{[a]_{q^2}[a]_{q^3}\cdots[a]_{q^{a-1}}}{[a]_{q^2}[a]_{q^3}\cdots[a]_{q^{a-1}}} \\
&=\frac{(1-q^{b+1})(1-q^{b+2})\cdots (1-q^{b+a-1})}{[a-1]_{q^a}}[a]_{q^2}[a]_{q^3}\cdots[a]_{q^{a-1}}
\end{align*}

Observe that the fraction is similar to the $q^a$-count of the lattice
points inside a simplex of size $b/a$, and that the product of $[a]_{q^i}$ is a $q$ analog of $a^{a-2}$.

\subsubsection{}
This algebraic expression is suggestive of the simplex of
$(a,b)$-cores.  The lattice of $a$-cores has index $a$ within
the standard lattice.  The sublattice $\Lambda_T=(a\Z)^{a-1}$, has index $a^{a-1}$ inside the standard lattice, and hence $a^{a-2}$ within the lattice of $a$-cores.

The intersection of each coset $\mathfrak{c}$ of $\Lambda_T$ with the simplex of
$(a,b)$-cores is a $k\Delta_{a-1}$, where $k$ is slightly smaller than $b/a$, and depends on $b$ and $\mathfrak{c}$.

It appears that
$\Cat_{a,b}(q)$ is $q^a$ counting the lattice points in each coset $\mathfrak{c}$,
but then shifting the result by a factor of $q^{\iota(\mathfrak{c})}$ for some $\iota(\mathfrak{c})$.

Algebraically, this suggests
\begin{conjecture} \label{conj:cosets}
There is an \emph{age} function $\iota$ on the cosets $\mathfrak{c}\in\Lambda/\Lambda_T$, so that
$$\sum_{\mathfrak{c}\in\Lambda/\Lambda_T} q^{\iota(\mathfrak{c})}=[a]_{q^2}[a]_{q^3}\cdots[a]_{q^{a-1}}$$
 and
$$\Cat_{a,b}(q)=\sum_{\mathfrak{c}\in\Lambda/\Lambda_T} q^{\iota(\mathfrak{c})} {b/a-s(\mathfrak{c},b)+a-1\brack a-1}_{q^a}$$
where the $q^a$ binomial coefficient $q^a$-counts the points in $\mathfrak{c}\cap \mathcal{SC}_{a-1}(b)$.
\end{conjecture}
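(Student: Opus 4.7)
The plan is to prove the conjecture by combining a geometric stratification of $\SC_a(b)$ by cosets of $\Lambda_T$ with a careful choice of the linear functional giving the $q$-grading. First, I would pin down the linear functional $L$ on the shifted charge lattice such that $\Cat_{a,b}(q) = \sum_\lambda q^{L(\lambda)+c}$ for some constant $c$. The algebraic rewriting $\Cat_{a,b}(q) = \frac{(1-q^{b+1})\cdots(1-q^{b+a-1})}{[a-1]_{q^a}}\cdot[a]_{q^2}\cdots[a]_{q^{a-1}}$ dictates that $L$ scale linearly with $b$ at slope $b/a$, matching the dilation of the simplex. Working in the $\TD_a(b)$-coordinates of Lemma \ref{lem:standardsimplex}, $L$ can be pinned down by matching the height of the vertices of the simplex with the degrees of the numerator factors $1-q^{b+k}$. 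The correct $L$ is not manifestly either $\ell$ or $\sk$ from Section \ref{sec:qt} (the Specialization Conjecture tells us those match $\Cat_{a,b}(q)$ only after combination), so the expected candidate is an affine linear combination of the two.

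Second, I would analyze the sublattice $\Lambda_T = \Lambda\cap (a\Z)^{a-1}$ and its cosets explicitly. Since $[\Lambda : \Lambda_T] = a^{a-2}$, each coset $\mathfrak{c}$ may be indexed by a canonical representative $\mathbf{r}(\mathfrak{c})$ in a fixed fundamental parallelepiped. Restricting the defining inequalities of $\SC_a(b)$ from Section \ref{sec:numberandsize} to a coset, a direct calculation should show that $\mathfrak{c} \cap \SC_a(b)$, after translation by $-\mathbf{r}(\mathfrak{c})$, is a dilation $k\Delta_{a-1}$ of the standard simplex, where the nonnegative integer $k = b/a - s(\mathfrak{c},b)$ encodes how much smaller the coset simplex is than the ambient one. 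The exact form of $s(\mathfrak{c},b)$ should be read off from the inequalities and depend only on $\mathbf{r}$ and $b\bmod a$.

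Third, define $\iota(\mathfrak{c}) := L(\mathbf{r}(\mathfrak{c}))$. With this definition, combining the first two steps produces $\sum_\mathfrak{c} q^{\iota(\mathfrak{c})}\,{b/a - s(\mathfrak{c},b) + a -1 \brack a-1}_{q^a}$ essentially for free: on each coset, $L$ takes values in $\iota(\mathfrak{c}) + a\mathbb{Z}_{\geq 0}$, parameterized by the $q^a$-graded lattice points in the scaled simplex, and the $q^a$-binomial coefficient is exactly the enumerator of those lattice points. What remains is the sum identity $\sum_\mathfrak{c} q^{\iota(\mathfrak{c})} = \prod_{k=2}^{a-1}[a]_{q^k}$, together with a check that $\iota(\mathfrak{c})$ is well-defined independently of $b$.

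The main obstacle is this final sum identity. I expect the correct framework is the McKay correspondence / orbifold age grading for cyclic quotient singularities: the right-hand side $\prod_{k=2}^{a-1}[a]_{q^k}$ is precisely the Hilbert series of the coinvariant algebra of the standard representation of $\Z_a$, and the cosets of $\Lambda/\Lambda_T$ together with the grading $\iota$ should be identified with the twisted sectors of the orbifold $V/\Z_a$. Establishing this identification — matching the lattice-theoretic data $(\Lambda/\Lambda_T, \iota)$ with the representation-theoretic data of twisted sectors and ages — is the substantive geometric content. As a sanity check and to guide the general proof, I would verify the conjecture for $a=3$ (where Proposition \ref{prop:qt3} already makes everything explicit) and $a=4$ by direct computation, confirming the ansatz for $L$ and $\iota$ and providing a template for the general argument.
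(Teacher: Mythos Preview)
The statement you are addressing is a \emph{conjecture} in the paper, not a theorem; the paper gives no proof. Immediately after stating it, the authors write that they ``could not find an obvious candidate for an explicit form of $\iota$ in general,'' and they only verify the identity for $a=3$ and $a=4$ by direct expansion. So there is no ``paper's own proof'' to compare against, and what you have written is a proof \emph{plan}, not a proof.

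More importantly, your plan has a genuine gap at Step~1. You propose to ``pin down the linear functional $L$ on the shifted charge lattice such that $\Cat_{a,b}(q) = \sum_\lambda q^{L(\lambda)+c}$.'' But the paper explicitly addresses this in the paragraph preceding the conjecture: ``An obvious hope is that $\Cat_{a,b}(q)$ is a sum over the lattice points in $\SC_{a,b}$ of $q^L$, where $L$ is some linear function. This does not appear to be true.'' The entire point of the conjecture is that one needs $L+\iota$, with $\iota$ a genuinely coset-dependent correction that is \emph{not} the restriction of a global linear functional. If a linear $L$ as in your Step~1 existed, then your Step~3 definition $\iota(\mathfrak{c}):=L(\mathbf{r}(\mathfrak{c}))$ would make $\iota$ the restriction of $L$, the coset decomposition would be a triviality, and the sum identity $\sum_{\mathfrak{c}} q^{\iota(\mathfrak{c})}=\prod_{k=2}^{a-1}[a]_{q^k}$ would follow from nothing more than the fundamental-parallelepiped enumeration of $\Lambda_T$ inside $\Lambda$. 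In other words, your Step~1 assumes something strictly stronger than the conjecture itself, and something the authors believe to be false.

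Your closing remarks about the McKay/age interpretation are in the same spirit as the paper's own Remark following the conjecture (which invokes Chen--Ruan cohomology and twisted sectors of $[\mathbb{P}^a/\Z_a]$), but the paper is candid that ``at this point, there is no concrete connection.'' So this is motivation, not an argument. Likewise, your proposed sanity checks at $a=3,4$ are exactly the computations the paper already carries out as evidence; they do not advance toward a proof.
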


\begin{remark}
We could not find an obvious candidate for an explicit form of $\iota$ in general.


\end{remark}

\begin{remark}
Conjecture \ref{conj:cosets} was motivated in part by Chen-Ruan cohomology \cite{CR, ALR}, which has found applications to the Ehrhart theory of rational polytopes \cite{Stapledon}.  Chen-Ruan cohomology $H^*_{CR}(\mathcal{X})$ is a cohomology theory for an orbifold (or Deligne-Mumford stack) $\mathcal{X}$.   As a vector space, $H^*_{CR}(\mathcal{X})$ is the usual cohomology of a disconnected space $\mathcal{IX}$.  One component $C_0$ of $\mathcal{IX}$ is isomorphic to $\mathcal{X}$.  The other components $C_\alpha, \alpha\neq 0$ are called \emph{twisted sectors} and are (covers of) fixed point loci in $\mathcal{X}$.  The pertinent feature for us is that the grading of the cohomology of the twisted sectors are \emph{shifted} by rational numbers, $\iota(\alpha)$, that is
$$H^k_{CR}(\mathcal{X})=\bigoplus_{\alpha} H^{k-\iota(\alpha)}(C_\alpha)$$
The function $\iota$ is known as the ``degree shifting number'' or ``age''.

Orbifolds could potentially be connected to our story through toric geometry, and the well known correspondence between lattice polytopes and polarized toric varieties.  When the polytope is only rational, in general the toric variety is an orbifold. The simplex of $(a,b)$-cores in $\Lambda_a$ corresponds  orbifold $[\proj^a/\Z_a]$.  More specifically, there is an torus equivariant orbifold line bundle $\mathcal{L}$ over $\proj^a/\Z_a$, so that the lattice points in $\SC(a,b)$ correspond to the torus equivariant sections of $\mathcal{L}^b$.

In the fan point of view, the cosets of the lattice correspond exactly to group elements of isotropy groups, and hence to twisted sectors.  

This discussion is rather vague, and at this point, there is no concrete connection between $\Cat_{a,b}(q)$ and the geometry of the orbifold $\proj^a/\Z_a$ it would be very interesting to find one.

\end{remark}

Note that if Conjecture \ref{conj:cosets} holds, it would give
another proof, presumably more combinatorial, that $\Cat_{a,b}(q)$ are all positive.  Furthermore, Conjecture \ref{conj:cosets} suggests:

\begin{conjecture}
For every residue class $r, 0\leq r< a$, the coefficients of $q^{ak+r}$ in $\Cat_{a,b}(q)$ are unimodal.
\end{conjecture}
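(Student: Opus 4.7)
The plan is to deduce unimodality from the conjectural coset decomposition in Conjecture \ref{conj:cosets}. Writing $N_\mathfrak{c} := b/a - s(\mathfrak{c},b) + a - 1$, that conjecture asserts
$$\Cat_{a,b}(q) = \sum_{\mathfrak{c} \in \Lambda / \Lambda_T} q^{\iota(\mathfrak{c})} {N_\mathfrak{c} \brack a-1}_{q^a}.$$
Each summand, being a polynomial in $q^a$ scaled by the pure monomial $q^{\iota(\mathfrak{c})}$, contributes only to monomials of the form $q^{\iota(\mathfrak{c}) + ak}$. Thus for a fixed residue $r \in \{0, \dots, a-1\}$, the coefficients of $q^{ak+r}$ in $\Cat_{a,b}(q)$ come exclusively from cosets with $\iota(\mathfrak{c}) \equiv r \pmod a$; setting $Q = q^a$, the conjecture reduces to showing that
$$f_r(Q) = \sum_{\iota(\mathfrak{c}) \equiv r \pmod a} Q^{(\iota(\mathfrak{c})-r)/a} \, {N_\mathfrak{c} \brack a-1}_Q$$
is unimodal in $Q$.

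By Sylvester's theorem each Gaussian binomial is symmetric and unimodal, and a sum of symmetric unimodal polynomials sharing a common center of symmetry is itself symmetric and unimodal. I would aim to establish the alignment identity
$$\iota(\mathfrak{c}) + \tfrac{a(a-1)}{2}\bigl(N_\mathfrak{c} - (a-1)\bigr) = C_r \quad \text{whenever } \iota(\mathfrak{c}) \equiv r \pmod a,$$
which says that every summand of $f_r$ is symmetric about a common center. This identity cannot hold simultaneously across all cosets (the paper explicitly warns that $\Cat_{a,b}(q)$ is not unimodal), so the residue-class restriction is essential. Experimentally one can verify both Conjecture \ref{conj:cosets} and this identity for small $a$ (say $a \in \{3,4,5\}$) by extracting $\iota$ directly from the rewriting of $\Cat_{a,b}(q)$ at the start of this section, and then attempt to prove it in general by tracking age shifts through the cone decomposition sketched after Conjecture \ref{conj:cosets}: at each vertex of $b\Delta_{a-1}$ the $\binom{a-1}{k}$ subcones produced by translating the $A_{a-1}$ arrangement should admit ages computable uniformly in terms of their coset representatives.

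The principal obstacle is that Conjecture \ref{conj:cosets} is itself unproven and, as the paper emphasizes, no explicit closed form for $\iota$ is known in general. The most promising avenue is the Chen-Ruan / toric orbifold connection the paper hints at: if $\Cat_{a,b}(q)$ can be realized as the $q$-graded dimension of sections of an orbifold line bundle on $[\proj^a / \Z_a]$ (or a closely related cyclic quotient), then $\iota$ would emerge as the age grading on twisted sectors, which admits a closed form for abelian quotients. Establishing this identification would simultaneously pin down $\iota$, verify the coset decomposition, and supply the alignment needed above. Failing a geometric proof, an alternative route is a direct combinatorial attack: construct explicit injections between lattice points in $\SC_a(b)$ contributing weight $ak+r$ and those contributing weight $a(k+1)+r$ on the ascending half of each residue class, circumventing the need to identify $\iota$ on the nose.
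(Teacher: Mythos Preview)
The statement you are attempting to prove is itself a \emph{conjecture} in the paper; no proof is given there. The paper merely observes that the conjectural coset decomposition (Conjecture~\ref{conj:cosets}) \emph{suggests} the unimodality statement, and then verifies it by hand in the displayed $a=3$ and $a=4$ examples. So there is no ``paper's own proof'' to compare against.

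That said, your proposed mechanism has a concrete gap beyond its reliance on the unproven Conjecture~\ref{conj:cosets}. Your alignment identity --- that within a fixed residue class $r$ all summands $q^{\iota(\mathfrak c)}{N_{\mathfrak c}\brack a-1}_{q^a}$ share a common center of symmetry --- already fails for $a=4$, as one can read off from the paper's own example. For $b=4k+1$ and residue $r=1$, the relevant sum (in $Q=q^4$, after removing the common factor $q$) is
\[
Q\,{k+2\brack 3}_Q+Q^2{k+2\brack 3}_Q+Q^2{k+1\brack 3}_Q+Q^3{k+1\brack 3}_Q
=(Q+Q^2){k+2\brack 3}_Q+(Q^2+Q^3){k+1\brack 3}_Q,
\]
whose two pieces are symmetric about $3k/2$ and $(3k-1)/2$ respectively. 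The centers do not agree, and indeed the full residue-$1$ piece is not symmetric in $Q$, so the ``sum of symmetric unimodals with common center'' argument cannot apply. (For residue $0$ one can salvage a common center $3k/2$ after pairing terms, but not for residues $1$ or $2$.) Since a sum of symmetric unimodal polynomials with different centers need not be unimodal, your argument does not go through even conditionally on Conjecture~\ref{conj:cosets}. Any genuine approach will have to exploit more than the individual Gaussian binomials --- for instance, the specific pattern of shifts and sizes $N_{\mathfrak c}$ that arises, or the combinatorial injections you mention at the end.
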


\subsubsection{Examples}

\begin{example}
By expanding both sides, it is straightforward to check the identities
\begin{align*}
\Cat_{3,3k+1}(q) & ={k+2\brack 2}_{q^3}
+q^2{k+1 \brack 2}_{q^3}
+q^4{k+1\brack 2}_{q^3} \\
\Cat_{3,3k+2}(q)&=
{k+2\brack 2}_{q^3}
+q^2{k+2\brack 2}_{q^3}
+q^4{k+1\brack 2}_{q^3}
\end{align*}
\end{example}

\begin{example}
When $a=4$ and $b=4k+1$,

\begin{align*}
\Cat_{4, 4k+1}(q)=&
{k+3\brack 3}_{q^4}&+&q^4{k+2\brack 3}_{q^4}&+&q^8{k+2\brack 3}_{q^4}&+&q^{12}{k+1\brack 3}_{q^4} \\
+&q^5{k+2\brack 3}_{q^4}&+&q^9{k+2\brack 3}_{q^4}&+&q^9{k+1\brack 3}_{q^4}&+&q^{13}{k+1\brack 3}_{q^4} \\
+&q^2{k+2\brack 3}_{q^4}&+&q^6{k+2\brack 3}_{q^4}&+&q^6{k+1\brack 3}_{q^4}&+&q^{10}{k+1\brack 3}_{q^4}\\
+&q^3{k+2\brack 3}_{q^4}&+&q^7{k+2\brack 3}_{q^4}&+&q^{11}{k+1\brack 3}_{q^4}&+&q^{15}{k+1\brack 3}_{q^4}
\end{align*}
Here the terms have been grouped so that the coefficients on each line have the same residue mod 4, making it easy to verify the unimodality conjecture.

\end{example}

\section{Toward \texorpdfstring{$(q,t)$}{(q,t)}-analogs} \label{sec:qt}

We now turn toward applying the lattice-point viewpoint toward the $(q,t)$-analog $\Cat_{a,b}(q,t)$, original defined in terms of simultaneous cores in \cite{AHJ}.

\subsection{Results}
Our main result is that the statistics $\ell$ and $\sk$ in the definition of $\Cat_{a,n}(q,t)$ are piecewise linear functions on the simplex of cores $\SC_a(b)$.  

\begin{proposition}
\label{prop:length}
$$\ell(\mathbf{x})=-\frac{a-1}{2}+a\max x_i$$
\end{proposition}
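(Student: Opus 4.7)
The plan is to express $\ell(\lambda)$ first in the fermionic/Maya-diagram picture, and then translate to the abacus charge coordinates $\mathbf{x}$. My starting point is the elementary observation that $\ell(\lambda) = \lambda'_1$, the largest part of the conjugate partition. For any partition $\mu$, the largest part $\mu_1$ is read off from its Maya diagram as $\max_{e\in S_\mu} e + \tfrac12$, since the topmost up-step of the boundary path sits exactly at the top of the first row in Russian notation.

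Next, I need to relate $S_{\lambda'}$ to $S_\lambda$. A direct inspection of the boundary path (or, equivalently, a $180^{\circ}$ rotation of the Young diagram) produces the standard involution
\[
S_{\lambda'} \;=\; -\bigl(\Z_{1/2}\setminus S_\lambda\bigr).
\]
Composing these two identities gives $\ell(\lambda) = -\min\bigl(\Z_{1/2}\setminus S_\lambda\bigr) + \tfrac12$, i.e.\ the length of $\lambda$ is encoded by the lowest-energy \emph{empty} state in its Maya diagram.

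Now I translate to the abacus. Section~\ref{sec:abacus} identifies the top filled bead on runner $k$ with the energy $e_k = -ac_k - k - \tfrac12$, and because that runner is right-justified the very next slot above it, namely $e_k+a$, is the lowest empty energy on runner $k$. Since the $a$ runners partition $\Z_{1/2}$, the global minimum of the empty positions is $\min_k(e_k+a)$, and therefore
\[
\ell(\lambda) \;=\; \max_k\bigl(ac_k+k\bigr) \,+\, 1 \,-\, a.
\]

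The shift $x_k = c_k + s_k$ gives $ac_k + k = ax_k + \tfrac{a-1}{2}$, and substituting collapses this to $-\tfrac{a-1}{2} + a\max_k x_k$, as claimed. The only nontrivial ingredient is the conjugation identity $S_{\lambda'} = -(\Z_{1/2}\setminus S_\lambda)$, which I would sanity-check on a small example (e.g.\ the $\lambda=(3,2,2)$ partition already in the paper); I do not anticipate any real obstacle once that step is in hand.
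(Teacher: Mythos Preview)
Your argument is correct and lands on the same formula by the same essential mechanism as the paper: both proofs locate the lowest unoccupied energy state $e$ on the abacus (which lives on the runner $m$ maximizing $x_m$) and read off $\ell$ from it. The only difference is in how you justify the link between $\ell$ and $e$. The paper observes directly that $\ell(\lambda)$ equals the number of occupied states above $e$, and then counts those electrons runner by runner, which forces a small case split on whether $i<m$ or $i>m$ before the sum collapses using $\sum c_i=0$. You instead pass through conjugation, using $\ell(\lambda)=\lambda'_1$, the identity $S_{\lambda'}=-(\Z_{1/2}\setminus S_\lambda)$, and $\mu_1=\max S_\mu+\tfrac12$ to get $\ell(\lambda)=-e+\tfrac12$ in one stroke; this sidesteps the case analysis entirely and is arguably cleaner. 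Either way the final abacus computation and the change to $\mathbf{x}$-coordinates are identical.
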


\begin{proposition}
\label{prop:skewlength}
Let $\lfloor x\rfloor_0=\max\left(0, \lfloor x\rfloor\right)$.
Then
$$\sk(\mathbf{x})=\sum_{i,j=0}^a \lfloor x_i-x_j\rfloor_0 - \lfloor x_i-x_j-b/a\rfloor_0$$
\end{proposition}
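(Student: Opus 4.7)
The plan is to interpret both sides via the abacus construction of Section~\ref{sec:abacus}. Recall the bijection between cells of $\lambda$ and \emph{inversions}: pairs $(F, E)$ with $F$ a filled level, $E$ an empty level, $F > E$, and hook length $F - E$. An $a$-row of $\lambda$ is the maximal part in each nonempty residue class; translated to beads, a cell lies in an $a$-row precisely when $F$ is the top bead on some runner that corresponds to an actual row of $\lambda$, and it lies in the $b$-boundary precisely when $F - E < b$. Writing $e_j = -a(x_j + 1/2)$ for the top bead on runner $j$ (immediate from $e_i = -ac_i - i - 1/2$ and $x_i = c_i + i/a - (a-1)/(2a)$), this rewrites the skew length as
$$\sk(\lambda) = \sum_{j:\ j \text{ real}} \#\{E \text{ empty} : e_j - b < E < e_j\}.$$

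Next, since $\lambda$ is an $a$-core, the beads on runner $i$ are right-justified at $e_i, e_i - a, e_i - 2a, \ldots$, so the empties on runner $i$ are exactly $E = e_i + m a$ for $m \geq 1$. Substituting and using $e_j - e_i = a(x_i - x_j)$, the condition $e_j - b < E < e_j$ becomes $(x_i - x_j) - b/a < m < x_i - x_j$ with $m \geq 1$. Since $x_i - x_j \equiv (i - j)/a \pmod{1}$ is an integer only when $i = j$ (in which case no positive $m$ satisfies the bound), for $i \ne j$ the strict upper bound may be replaced by $m \leq x_i - x_j$ without changing the count; the result is exactly $\lfloor x_i - x_j \rfloor_0 - \lfloor x_i - x_j - b/a \rfloor_0$.

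The remaining task is to show non-real $j$ contribute zero, so that the sum can be extended over all pairs $(i, j)$. The empty positions of $\lambda$ are given by the conjugate bijection as $\{r - \lambda'_r - 1/2 : r \geq 1\}$, a strictly increasing sequence whose minimum (at $r = 1$) is $-\lambda'_1 + 1/2 = -\ell(\lambda) + 1/2$. Enumerating the beads of $\lambda$ in decreasing order of energy by $b_k = \lambda_k - k + 1/2$ (with $\lambda_k = 0$ for $k > \ell$), the first $\ell$ beads are real rows while $b_{\ell + 1} = -\ell - 1/2$, $b_{\ell + 2} = -\ell - 3/2$, etc.\ are vacuum beads. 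If the top bead $e_j$ on runner $j$ is non-real, then $e_j = b_k$ for some $k > \ell$, forcing $e_j \leq -\ell - 1/2$, strictly below the lowest empty. Hence $(e_j - b, e_j)$ contains no empty positions and contributes zero to the formula.

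I expect the main subtle point to be this final step: confirming that a non-real top bead lies at position $\leq -\ell(\lambda) - 1/2$. This is a matter of locating the top bead on each runner within the global energy-decreasing sequence of beads of $\lambda$, and should be routine once the row--bead correspondence is unpacked.
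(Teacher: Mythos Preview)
Your argument is correct and follows essentially the same route as the paper: both decompose $\sk$ runner-by-runner, identify the $a$-rows with top beads on each runner, and count empties on runner $i$ lying below the top bead $e_j$ (the paper packages this as $\sk^T_{ij}-\sk^B_{ij}$, while you count directly in the interval $(e_j-b,e_j)$, but the arithmetic is identical). Your explicit verification that non-real top beads contribute zero is a point the paper's proof uses tacitly but does not spell out, so your version is in fact slightly more complete.
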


\subsubsection{Chambers of linearity}

The chambers of linearity of $\ell$ are unions of chambers of the \emph{braid hyperplane arrangement}; the chambers of linearity for $\sk$ are chambers of a deformation of the braid arrangement called the  \emph{Catalan hyperplane arrangement}.  See \cite{StanleyHyperplane} for an introduction from a combinatorial point of view.

\begin{definition}
The $A_{a-1}$ hyperplane arrangement is the set of the $\binom{a}{2}$ hyperplanes $x_i=x_j$ in the $a-1$ dimensional vector space $\sum x_i=0$.

There are $a!$ regions of the $A_{a-i}$ arrangement, which are indexed by permutations $\sigma$; the region indexed by $\sigma$ is where $x_{\sigma(0)}<x_{\sigma(1)}<\cdots<x_{\sigma(a)}$.
\end{definition}

\begin{definition}
A hyperplane arrangement $\mathcal{A}^\prime$ is a \emph{deformation} of an arrangement $\mathcal{A}$ if every hyperplane in $\mathcal{A}^\prime$ is parallel to one in $\mathcal{A}$.
\end{definition}

\begin{definition} \label{def:arrangements}
The \emph{Catalan arrangement} $\mathcal{C}_a$ is the union of the $3\binom{a}{2}$ hyperplanes $x_i-x_j\in\{-1,0,1\}, i<j$.
\end{definition}

The name \emph{Catalan arrangement} comes from the fact that $\mathcal{C}_a$ has $a!C_a$ regions.

We have already seen the hyperplanes in the Catalan arrangement appearing.  If $b\mathcal{C}_a$ denotes the Catalan arrangement scaled by $b$ (so $x_i-x_j\in\{-b,0,b\}$), then the hyperplanes that define the simplex $\SC_a(b)$ of $(a,b)$-cores are in $b\mathcal{C}_a$.

From Proposition \ref{prop:length}, it is clear that length is linear on each chamber of the braid arrangement.  

The formula for $\sk$ given in Proposition \ref{prop:skewlength} is not piecewise linear on the vector space $V_a$.  However, when we restrict to the lattice $\Lambda+s$, the $x_i$ only change by an integers, and so on this restricted domain $\sk$ is indeed piecewise linear.  There is a piecewise linear function on all of $V_a$ that agrees with our $\sk$ on the points of $\Lambda+s$, but it is more complicated to write down.  In particular, it is not $S_a$ invariant, while our formula for $\sk$ is.

\subsubsection{Examples: largest and smallest $(a,b)$-cores}
As a basic check, we now illustrate that Propositions \ref{prop:length} and \ref{prop:skewlength} give the correct results for the smallest and large $(a,b)$-cores; we will use these results later.

\begin{example}[The empty partition]
The empty partition corresponds to the vector $s$; recall $s_i=i/a-(a-1)/(2a)$.  
  The largest of the $s_i$ is $s_{a-1}=(a-1)/(2a)$, and so $\ell(s)=a(a-1)/(2a)-(a-1)/2=0$.  

Since $s_i-s_{i-1}=1/a$, we have $s_i-s_j<1$, and so $\lfloor s_i-s_j\rfloor_0=0$,  verifying that $\sk(s)=0$.

\end{example}

\begin{example}[The largest $(a-b)$-core]

The largest $(a,b)$-core $\lambda^M$ is the one vertex of $\SC_a(b)$ that is a lattice point.  Its coordinates are some permutation of
$bs=(bs_0,bs_1,\cdots bs_{a-1})$. Since $\sk$ is $S_a$ invariant we may assume it is $bs$.  

It is immediate that:
$$\ell(\lambda^M)=-\frac{a-1}{2}+ab\frac{a-1}{2a}=\frac{(a-1)(b-1)}{2}$$

Verifying $\sk(\lambda^M)=(a-1)(b-1)/2$ is more complicated.  We have 
$$\sk(\lambda^M)=\sum_{i<j} \left\lfloor \frac{bj}{a}-\frac{bi}{a}\right\rfloor
-\left\lfloor \frac{bj}{a}-\frac{bi}{a}-\frac{b}{a}\right\rfloor$$
The summand depends only on the difference $k=j-i$, and is equal to $\lfloor kb/a\rfloor-\lfloor(k-1)b/a\rfloor$.

There are $(a-1)$ pairs $(i,j)$ with $i-j=1$, and in general $a-k$ pairs with $i-j=k$, and so we have
\begin{align*}
\sk(\lambda^M) &=\sum_{k=1}^{a-1}(a-k)\left\lfloor \frac{b}{a} k\right\rfloor-(a-k)\left\lfloor\frac{b}{a}(k-1)\right\rfloor \\
&=\sum_{k=1}^{a-1} \left\lfloor \frac{b}{a} k\right\rfloor \\
&=\sum_{k=1}^{a-1} \frac{b}{a}k-\sum_{k=1}^{a-1}\left\langle \frac{b}{a}k\right\rangle \\
&=\frac{b}{a} \frac{(a-1)a}{2}-\frac{1}{a}\frac{(a-1)a}{2} \\
&=\frac{(a-1)(b-2)}{2}
\end{align*}
where the second line follows from reindexing the second sum, the third line applies $\lfloor x\rfloor=x-\langle x\rangle$, and the fourth line applies $\sum i=n(n+1)/2$ and the fact that, since $a$ and $b$ are relatively prime, $kb$ takes on every residue class mod $a$ exactly once as $k$ ranges from 1 to $a$.

\end{example}

\subsection{Length and Skew Length are piecewise linear}
In this section we prove Propositions \ref{prop:length} and \ref{prop:skewlength}.

\subsubsection{Proof of Proposition \ref{prop:length} - length is piecewise linear}

\begin{proof}
We first translate $\ell(\lambda^S)$ into fermionic language.
Let $e$ be the lowest energy state of $S$ that is not occupied by an electron.  Then $\ell(\lambda^S)$ is the number of electrons with energy greater than $e$.

Recall that the highest energy occupied state on the $i$th runner is $-ac_i-i-1/2$, and so the lowest unoccupied state is $a$ higher, and hence $e=\min_i -ac_i-i-1/2+a$.  

Let $m$ be the runner of the $a$-abacus that has the lowest unoccupied energy state.  For $i\neq m$, there are roughly $c_m-c_i\geq 0 $ electrons on the $i$th runner that have energy great than $e$.  The exact number depends  on which of $i$ and $m$ is bigger: if $i<m$, there are exactly $c_m-c_i$ such electrons, while if $i>m$, there are only $c_m-c_i-1$ such electrons. 

 There are $a-1-m$ runners with $i>m$, and hence we have
\begin{align*}
\ell(\core_a(\mathbf{c})) & =-(a-1-m)+\sum_{i\neq m} c_m-c_i \\
 &=-(a-1-m)+ac_m
 \end{align*}
 where the second line follows by adding $\sum c_i=0$ to the expression.

Since $x_i=c_i+i/a-(a-1)/(2a)$, it follows that
$$
\ell(\core_a(x))= -(a-1)/2+a\max x_i 
$$

\end{proof}

\subsubsection{Proof of Proposition \ref{prop:skewlength} - skew length is piecewise linear}

\begin{definition} \label{def:skewij}
For $\lambda$ and $(a,b)$-core, let $\sk^T_{i,j}(\lambda)$ be the number of cells in the $i$th $a$-part with unoccupied state on the $j$th runner.

Furthermore let $\sk_{ij}^S(\lambda)$ be the number of such cells with hook length less than $b$, and $\sk_{ij}^B(\lambda)$ be the number of such cells with hook length greater than $b$.
 
Here, $T, S$ and $B$ are short for \emph{total, small} and \emph{big}.
\end{definition}

From Definition \ref{def:skewij} it is clear that
$$\sk(\lambda)=\sum_{i\neq j}\sk^S_{ij}(\lambda)$$
$$\sk_{ij}^S(\lambda)=\sk_{ij}^T(\lambda)-\sk^B_{ij}(\lambda)$$
and so Proposition \ref{prop:skewlength} follows from the following lemma.

\begin{lemma}
Let $\lambda=\core_a(\mathbf{x})$ be an $(a,b)$-core.  Then:
\begin{align*}
\sk_{ij}^T(\lambda)&=\lfloor x_i-x_j\rfloor_0 \\
\sk_{ij}^B(\lambda)&=\lfloor x_i-x_j-b/a\rfloor_0
\end{align*}
\end{lemma}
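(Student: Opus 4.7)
The proof relies on the fermionic description of cells established in Section \ref{sec:abacus}: a cell $\square\in\lambda$ corresponds to an inversion $(E_f,E_u)$ on the abacus, where $E_f$ is a filled level, $E_u$ is an empty level, $E_u<E_f$, and the hook length is $h(\square)=E_f-E_u$. Cells belonging to the $i$th $a$-part are exactly those whose filled level $E_f$ is the topmost electron on the runner associated to that $a$-part; call this level $E_i=-ac_i-i-1/2$, as in the proof of Theorem \ref{thm:quadratic}.

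Because the abacus is right-justified on each runner, the unoccupied levels on the $j$th runner are precisely $E_j+ka$ for $k=1,2,3,\ldots$. Thus $\sk_{ij}^T(\lambda)$ equals the number of positive integers $k$ with $E_j+ka<E_i$, equivalently the number of positive integers strictly less than $(E_i-E_j)/a$. For $i\neq j$ the fractional part of this quantity is $(j-i)/a$, which is nonzero modulo $1$, so the count is exactly $\lfloor (E_i-E_j)/a\rfloor_0$. Translating to the shifted coordinates via $x_k=c_k+s_k$ and $s_k-s_l=(k-l)/a$ reduces $(E_i-E_j)/a$ to a difference of $x$-coordinates, yielding $\lfloor x_i-x_j\rfloor_0$ once the index convention relating the $i$th $a$-part to its runner is matched with the labelling of $x_i$.

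For $\sk_{ij}^B$, cells with hook length strictly exceeding $b$ correspond to inversions satisfying $E_u<E_i-b$. Repeating the argument with $E_i$ replaced by $E_i-b$ gives
$$\sk_{ij}^B(\lambda)=\lfloor (E_i-E_j-b)/a\rfloor_0 = \lfloor x_i-x_j-b/a\rfloor_0,$$
which is the second claim. The main bookkeeping obstacle is threading the correspondence between $a$-parts, runners, and the index on $x$ carefully enough that the two formulas emerge with the correct signs; this is where the conventions fixed earlier in the paper (in particular the coordinate shift of Lemma \ref{lem:standardsimplex}) must be invoked. Edge cases are handled automatically: when $i=j$ both sides vanish, and when runner $i$ corresponds to the empty residue class (so no $a$-part lives there), $x_i$ is the smallest coordinate and both right-hand sides evaluate to zero.
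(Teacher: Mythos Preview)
Your argument is essentially the paper's: identify cells of the $i$th $a$-part with inversions whose filled level is the top electron $E_i=-ac_i-i-1/2$ on runner $i$, count empty levels on runner $j$ below $E_i$ (respectively below $E_i-b$), and rewrite the count in the shifted $x$-coordinates. The one place you hand-wave---``once the index convention relating the $i$th $a$-part to its runner is matched''---is exactly the step the paper carries out explicitly; note that the paper's own computation in fact produces $\lfloor x_j-x_i\rfloor_0$ rather than $\lfloor x_i-x_j\rfloor_0$, so the indexing here genuinely requires care and your deferral of it is the only soft spot in an otherwise correct and identical approach.
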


\begin{proof}
Recalling that cells are in bijection with pairs $(e, f)$, with $e>f$ energy levels, $e$ filled and $f$ empty, we see that $\sk^T_{ij}$ counts pairs $(e, f)$ with $e$ the highest energy level on the $i$th runner, $f<e$ any empty state on the $j$th runner.  In other words, $\sk^T_{ij}(\lambda)$ is the number of unoccupied states on the $j$th runner with energy less than $e$.

Recalling that the highest energy electron on the $i$th runner has energy $e_i=-ac_i-i-1/2$, and that the energy of each state to the left increases by $a$, we have  
\begin{align*}
\sk_{ij}^T(\lambda)&=q_a\left(-ac_i-i-1/2-(-ac_j-j-1/2)\right) \\
&=q_a(-a(x_i-x_j)) \\
&=\lfloor x_j-x_i\rfloor_0
\end{align*}

For $\sk_{ij}^B(\lambda)$, we want hooklengths of size at least $b$, so begin by reducing the energy of the first electron on the $i$th runner by $b$.  We now want to count ways of moving the resulting electron onto the $j$th runner, and so by our calculation of $\sk_{ij}^T(\lambda)$ we immediately have
$$\sk_{ij}^B(\lambda)=\lfloor x_j-x_i-b/a\rfloor_0$$
\end{proof}

\subsection{Symmetry}

Before applying piecewise linearity to the Symmetry and Specialization conjectures, we exploit the fact that $\ell$ and $\sk$ are $S_a$ symmetric in the $\mathbf{x}$-coordinates.

\subsubsection{The Dominant Cone $\dominant$}

Let $\dominant$ denote the dominant chamber $$\dominant=\{x\in V_a|x_0< x_1< x_2<\cdots < x_{a-1}\}$$
Then $\dominant$ is a fundamental domain for the action of $S_a$ on $V_a$.  We use $\polyh$ to denote the polyhedron $\dominant\cap\SC_{a}(b)$.  

Two vertices of $\polyh$ are particularly important to us: $0$ and  $x_\infty=b\cdot s$.

Consider the quotient map from $\SC_a(b)$ to $\polyh$.  A generic point near the origin in $\polyh$ has $a!$ preimages in $\SC_a(b)$.  However, as we cross the walls of the Catalan arrangement the number of preimages drops -- a point near $x_\infty$ has only $a$ preimages -- one near each vertex of $\SC_a(b)$. See Figure \ref{fig-quotient}.

\subsubsection{A refined lattice}

Now consider the image of $\Lambda_S\cap \SC_{a}(b)$ under the $S_a$ action as a subset of $\polyh$.   Since a point $x=(x_1,\dots,x_a)\in\Lambda+s$ must have have distinct coordinates, each point of $\Lambda_S\cap \SC_a(b)$ has a unique representative in $\polyh$, even though the quotient map is not injective.

Consider the action of $\Z_a\subset S_a$ that cylicly permutes the coordinates.  We have seen that the image of $\Lambda+s$ under the action of this $\Z_a$ action is a lattice $\Lambda_R$, and that $\SC_a(b)$ is integral with respect to $\Lambda_R$.  In fact, the braid arrangement, and hence $\polyh$, are unimodular with respect to $\Lambda_R$.  

\begin{definition} The \emph{rotated lattice} $\Lambda_R$ is the $a-1$ dimensional lattice
$$\Lambda_R=\{\mathbf{z}=(z_1,\dots z_a)|\mathbf{z}\in\Z^a+\mathbb{Z}(1/a,1/a,\dots, 1/a), \sum_{i=1}^a z_i=0\}$$
\end{definition}

\begin{definition}
For $1\leq i\leq a-1$, let $$v_i=\left(\underbrace{\frac{i}{a}-1,\dots \frac{i}{a}-1}_{i \text{ times}},\underbrace{ \frac{i}{a},\dotsm\frac{i}{a}}_{a-i \text{ times}}\right)$$
\end{definition}

One can see that the $v_i$ generate the lattice $\Lambda_R$ and that each $v_i$ spans one of the rays of $\dominant$ at $0$, and so the braid arrangement is unimodar with respect to $\Lambda_R$.

This means that locally near $x_0, \polyh=x_0+\sum t_i v_i$, with $t_i\in\Z, \_i\geq 0$, while near $x_\infty$ we have $\cone=x_\infty-\sum t_iv_i$ .

Because $\ell$ is a linear function on $\dominant$ it is immediate from the definitions of $\ell$ and $v_i$ that, for any point $x\in\cone$ we have  
$$\ell(x+v_i)=\ell(x)+i$$

Because the difference of two entries of $v_i$ is $0$ or $1$, we see that $\sk$ is a piecewise linear function when restricted to the elements of any translate of $\Lambda_R$.

The dependence of $\sk$ on $v_i$ depends on which chamber of the Catalan arrangement we are in.  Near $x_0$, we have $$\sk(x)=\sum_{i<j} x_j-x_i$$ and so
$$\sk(x+v_i) = \sk(x)+i(a-i)$$
However, near $x_\infty$, we have that $x_j-x_i>b$ if $j\neq i+1$, so 
$$\sk(x)=\sum_i x_{i+1}-x_i=x_a-x_1$$ and
$$\sk(x+v_i)=\sk(x)+1$$

This discussion is summarized as follows:

\begin{lemma} \label{lem:table}
Let $f\in\{\ell,\sk\}$.  For $x$ near $x_0$, let $\Delta_if=f(x+v_i)-f(x)$, and near $x_\infty$ let $\Delta_if=f(x-v_i)-f(x)$.  Then:

$$\begin{array}{r|cc}
 & \Delta_i\ell & \Delta_i\sk^\prime \\
\hline
0 & i & -i(a-i) \\
\infty & -i & 1
\end{array}
$$
\end{lemma}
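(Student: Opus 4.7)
The plan is to verify each of the four entries in the table by directly applying the closed-form expressions for $\ell$ and $\sk$ from Propositions \ref{prop:length} and \ref{prop:skewlength} and tracking how they change when $x$ is translated by $\pm v_i$.

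First I would dispatch the length. Both $x_0=0$ and $x_\infty=b\mathbf{s}$ lie in the closure of $\dominant$, where $\max_k x_k = x_{a-1}$. Since $(v_i)_{a-1}=i/a$ by the definition of $v_i$, adding $v_i$ near $x_0$ leaves the maximum in position $a-1$ and gives $\Delta_i\ell = a\cdot(i/a) = i$; symmetrically, subtracting $v_i$ near $x_\infty$ gives $\Delta_i\ell = -i$.

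For $\sk$ near $x_0$, I would observe that on a small neighborhood of the origin inside $\cone$ one has $|x_p-x_q|<1$ and $x_p-x_q-b/a<0$ (using $b>a$), so both floor terms in Proposition \ref{prop:skewlength} vanish and $\sk\equiv 0$ there. Translating by $v_i$ leaves all pairwise differences unchanged except for the $i(a-i)$ ``straddling'' pairs with $p>i\geq q$, where $(x+v_i)_p-(x+v_i)_q$ increases past $1$; each such pair contributes exactly $1$ to the first floor term and still $0$ to the second (since $1-b/a<0$), giving $\sk(x+v_i)=i(a-i)$ and hence $\Delta_i\sk^\prime=-i(a-i)$.

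For $\sk$ near $x_\infty$, the baseline $\sk(x_\infty)=\sum_{k=1}^{a-1}\lfloor bk/a\rfloor=(a-1)(b-1)/2$ is already computed in the largest $(a,b)$-core example. Subtracting $v_i$ decreases the difference of each straddling pair $(p,q)$ with $p>i\geq q$ by exactly $1$ and leaves every other difference untouched. For such a pair with $p-q=k\geq 2$, both $\lfloor kb/a\rfloor$ and $\lfloor (k-1)b/a\rfloor$ drop by $1$ (using that $\gcd(a,b)=1$ and $k-1,k\in\{1,\ldots,a-1\}$ force $kb/a,(k-1)b/a\notin\Z$), so the summand is unchanged. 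The unique straddling pair with $k=1$, namely $(i+1,i)$, has its first floor term decrease by $1$ while its second stays $0$, so $\Delta_i\sk=-1$ and $\Delta_i\sk^\prime=+1$. The main obstacle is the bookkeeping in this last case -- confirming that each relevant floor function really shifts by exactly $-1$ under a unit decrease of its argument -- but once non-integrality is invoked, the remaining three table entries follow with essentially no computation.
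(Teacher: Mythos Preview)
Your proposal is correct and follows essentially the same approach as the paper: both use the closed-form expressions for $\ell$ and $\sk$ from Propositions~\ref{prop:length} and~\ref{prop:skewlength} and track how they change under translation by $\pm v_i$. The only cosmetic difference is that the paper first names the linear form $\sk$ assumes on each extreme chamber (namely $\sum_{i<j}(x_j-x_i)$ near $x_0$ and $x_{a-1}-x_0$ near $x_\infty$) and then reads off the increment, whereas you follow the individual floor terms directly---the underlying arithmetic is identical.
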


\subsubsection{Orbifold cosets}

The quotient of $\Lambda_S$ by the cyclic $\Z_a$ action results in the refined lattice $\Lambda_R$, but we want to quotient out by the full $S_a$ action.  The resultant set of points is not itself a lattice, but consists of cosets of the $\Lambda_R$ lattice, which we call \emph{orbifold cosets}.  

We will use $\Lambda_{\mathcal{O}}$ denote the set $S_a(\Lambda_S)$; it consists of $(a-1)!$ cosets of $\Lambda_R$.  We denote the set of these cosets by $\mathcal{OC}$.

As the number of preimages of a point in $\polyh$ depends on the chamber of the Catalan arrangement, the number of orbifold cosets does as well. Near $x_{\infty}$, there is only be one orbifold coset, while near $0$ there are $(a-1)!$, and the chambers in between vary between these two extremes.   

\begin{figure}
\caption{Simplex of cores with chambers of linearity, and the quotient by $S_3$}
\label{fig-quotient}
\includegraphics{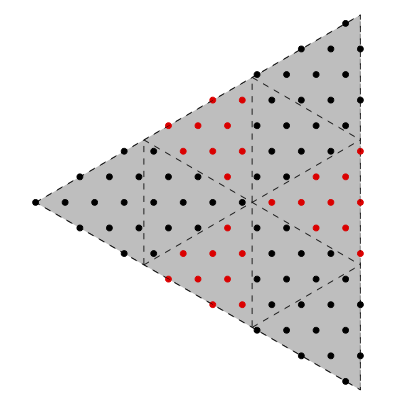}
\includegraphics{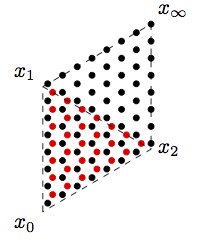}
\end{figure}

\subsection{Rationality and \texorpdfstring{$\Cat_{3,b}(q,t)$}{Cat{3,b}(q,t)}}

It is an immediate corollary of the piecewise linearity of the length and the skew length that, for fixed $a$, and for $b=ak+\delta$ in a fixed residue class $\delta$ mod $a$, we have that $\Cat_{a,b}(q,t)$ is a rational function of $q$ and $t$, and the denominator can be written so the exponents depend linearly on $b$ and $\delta$.

We explicitly compute these rational functions when $a=3$:

\begin{proposition} \label{prop:qt3}
Let $b=3k+1+\delta$, where $\delta\in\{0,1\}$.  Then:

$$\Cat_{3,b}(q,t)=\frac{t^{3k+\delta}}{(1-qt^{-1})(1-qt^{-2})}+\frac{q^kt^k(q+t+q^\delta t^\delta)}{(1-q^{-1}t^2)(1-q^2t^{-1})}+\frac{q^{3k+\delta}}{(1-tq^{-1})(1-tq^{-2})}$$

\end{proposition}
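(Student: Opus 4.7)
The plan is a direct computation via Brion's formula applied to the rational simplex $\SC_3(b)$, using the piecewise linearity of $\ell$ and $\sk^\prime$ established in Propositions \ref{prop:length} and \ref{prop:skewlength} together with the vertex slopes of Lemma \ref{lem:table}. The simplex has three vertices $v^{(1)}=bs$, $v^{(2)}=\sigma(bs)$, $v^{(3)}=\sigma^2(bs)$ (cyclic shifts), and Brion's formula writes the polynomial $\Cat_{3,b}(q,t)$ as a sum of three rational functions whose poles cancel; these three rational functions match $T_1,T_2,T_3$ in some order, each built from (i) the primitive $\Lambda_R$-edge directions at the vertex (giving the denominator) and (ii) the lattice points of $\Lambda+s$ in the fundamental parallelepiped of the tangent cone (giving the numerator).

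At $v^{(1)}=x_\infty=bs$, Lemma \ref{lem:table} reads off the slopes $(\Delta_i\ell,\Delta_i\sk^\prime)=(-i,1)$ along the edge directions $-v_i$, producing the denominator $(1-tq^{-1})(1-tq^{-2})$ of $T_3$; the values $\ell(bs)=b-1=3k+\delta$ and $\sk^\prime(bs)=0$ from the largest-core example give the numerator $q^{3k+\delta}$. The second cyclic vertex produces $T_1$ by the same mechanism, with its fundamental-parallelepiped lattice point being the empty partition $s$ (where $\ell(s)=0$, $\sk^\prime(s)=b-1$), giving numerator $t^{3k+\delta}$ and denominator $(1-qt^{-1})(1-qt^{-2})$, the $q\leftrightarrow t$ mirror of $T_3$.

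The main obstacle is $T_2$, the contribution from the third vertex, whose tangent cone has edge slopes $(-1,2)$ and $(2,-1)$ but is not unimodular with respect to $\Lambda+s$: its fundamental parallelepiped contains three lattice points, one per coset of $\Lambda$ inside $\Lambda_R$, in the spirit of the discussion surrounding Conjecture \ref{conj:cosets}. Evaluating $\ell$ and $\sk^\prime$ on these three representatives (handling $\delta=0$ and $\delta=1$ in parallel) yields the numerator $q^kt^{k+1}+q^{k+1}t^k+q^{k+\delta}t^{k+\delta}=q^kt^k(q+t+q^\delta t^\delta)$. I would close the argument by checking the formula for $b=4$ directly---the five $(3,4)$-cores $\emptyset,(1),(2),(1,1),(3,1,1)$ give $\Cat_{3,4}(q,t)=t^3+qt^2+qt+q^2t+q^3$---which, combined with the a priori rationality and bounded numerator degree in $k$ furnished by Propositions \ref{prop:length} and \ref{prop:skewlength}, pins down the identity for all $k,\delta$.
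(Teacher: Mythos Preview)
There is a genuine gap.  Brion's formula computes $\sum_{p\in P\cap\Lambda} x^p$ and can only be specialized to $q^{\ell}t^{\sk'}$ when $\ell$ and $\sk'$ are restrictions of \emph{global} linear forms; on $\SC_3(b)$ they are only piecewise linear, so applying Brion directly to the whole simplex is not legitimate.  A quick sanity check shows your three-vertex computation cannot produce the three displayed terms: the cyclic $\Z_3$ action permutes the vertices of $\SC_3(b)$, and since $\ell$ and $\sk'$ are $S_a$-invariant in the $x$-coordinates, the local slopes of $(\ell,\sk')$ along the two tangent-cone edges are identical at all three vertices.  Hence all three Brion denominators would coincide, whereas the three terms in the proposition carry three genuinely different denominators.  (The $\Z_3$ action permutes the cosets of $\Lambda+s$ inside $\Lambda_R$, which would affect numerators, but not denominators.)  Relatedly, the empty partition $s$ sits near the centre of $\SC_3(b)$, at distance $\sim b$ from every vertex, so it cannot be a fundamental-parallelepiped representative at any vertex cone.

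What the paper does instead is pass to the $S_3$-quotient $\polyh=\dominant\cap\SC_3(b)$ and subdivide it into its two chambers of linearity for $(\ell,\sk')$ (two triangles sharing an edge, with vertices $0,x_1,x_2,x_\infty$).  Brion is then applied chamber by chamber with honestly linear weights, tracking the two orbifold cosets of $\Lambda+s$ in Chamber~I and the single coset in Chamber~II.  The three terms of the proposition arise from $x_0$, $x_1$, and $x_\infty$ respectively; the contribution at $x_2$ turns out to vanish.  So the ``three terms from three vertices'' intuition is right in spirit, but the three vertices are those of the chamber decomposition of the quotient, not the three vertices of $\SC_3(b)$.
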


\begin{proof}
After the $S_3$ action, there are two chambers of linearity for $\ell$ and $\sk$, which we call Chamber I and Chamber II.  Both are triangles; Chamber I has vertices $(0,0,0), x_1=(-2b/9, b/9, b/9)$ and $x_2=(-b/9, -b/9, 2b/9)$.  Chamber II shares vertices $x_1$ and $x_2$ with Chamber I, and has third vertex $x_\infty=(-b/3,0,b/3)$.  Chamber I has two orbifold cosets, while Chamber II has only one.  See Figure \ref{fig-quotient}.  

Thus, we can express $\Cat_{3,b}(q,t)$ as the sum of three indicator functions of rational polytopes.  We write this indicator functions as a sum of contributions from the vertices using the Brion decomposition (rather than the Lawrence-Varchenko decomposition).  The Brion decomposition says that the the indicator function of a rational polyhedron is the \emph{positive} sum of the \emph{inward} pointing indicators of the cone at each vertex:

\begin{center}
  \begin{tikzpicture}
  \draw[fill=blue!20] (0,0) node [below left] {$A$} --(1,0) node [below right] {$B$}--(2,1) node [above right] {$C$}--(0,1) node [above left] {$D$}--cycle;

\draw (2.5,.5) node {$=$};

\begin{scope}[xshift=3cm]
    \draw (0,0) node [below left] {$A$};
    \draw[-triangle 45] (0,0)-- (0,1);
    \draw[-triangle 45] (0,0)-- (1, 0);
\end{scope}

\begin{scope}[xshift=5cm]
    \draw (-.6,.5) node {$+$};
    \draw (1,0) node [below right] {$B$};
    \draw[-triangle 45] (1,0)-- (0,0);
    \draw[-triangle 45] (1,0)-- (2, 1);
\end{scope}

\begin{scope}[xshift=8cm]
    \draw (-.6,.5) node {$+$};
    \draw (1,1) node [above right] {$C$};
    \draw[-triangle 45] (1,1)-- (0,0);
    \draw[-triangle 45] (1,1)-- (0, 1);
\end{scope}

\begin{scope}[xshift=10cm]
    \draw (-.6,.5) node {$+$};
    \draw (0,1) node [above left] {$D$};
    \draw[-triangle 45] (0,1)-- (0,0);
    \draw[-triangle 45] (0,1)-- (1, 1);
\end{scope}

  \end{tikzpicture}

\end{center}

In what follows, we determine the contribution of each of the four vertices $x_0,x_1,x_2,x_\infty$ to the Brion decomposition of $\Cat_{3,b}(q,t)$.

At $x_0$, the rays of the cone are $v_1$ which has $(\ell, \sk^\prime)$ weight $(1, -2)$, and $v_2$, which has $(\ell, \sk^\prime)$ weight $(2,-2)$.  Thus, the denominator at $0$ is $1/(1-qt^{-2})1/(1-q^2t^{-2})$.

The closest point to $0$ in the trivial orbifold coset is $x_0=(-1/3,0,1/3)$, which has weight $t^{3k+\delta}$, and the vertex closest to the origin in the nontrivial orbifold coset is $x_0^\prime=(-2/3,0,2/3)$, which has weight $qt^{3k+\delta-1}$.

Thus, the total contribution at $x_0$ is
$$\frac{t^{3k+\delta}+qt^{3k+\delta-1}}{(1-qt^{-2})(1-q^2t^{-2})}=\frac{t^{3k+\delta}(1+qt^{-1})}{(1-qt^{-2})(1-q^2t^{-2})}
=\frac{t^{3k+\delta}}{(1-qt^{-2})(1-qt^{-1})}$$
the first term in Proposition \ref{prop:qt3}.

At $x_1=(-2b/9, b/9, b/9)$, the rays pointing inward to Chamber I are $-v_1$and $v_2-v_1$; reading from Lemma \ref{lem:table}, we see that these rays have $(\ell, \sk^\prime)$ weights $(-1, 2)$ and $(1, 0)$, respectively.  Thus the denominator from the Chamber I cosets is $(1-q^{-1}t^2)(1-q)$.  The rays pointing inward to Chamber II at $x_1$ are $v_2-v_1$ and $v_2$; hence the denominator of the Chamber II contribution is $(1-q^2t^{-1})(1-q)$.

To find the numerators, we find the closest points in each chamber and relevant orbifold coset to $(-2b/9, b/9, b/9)$.  This information is summarized in the following table, which lists four points, the value of $\ell$ and $\sk$ on each point, the coset it belongs to, and which chamber it contributes to when $\delta=0$ and when $\delta=1$.  So, for instance, point $y$ is in Chamber II when $\delta=0$, but crosses to Chamber I when $\delta=1$, and point $z$ is the closest point to $x_1$ in Chamber II when $\delta=1$, but doesn't contribute when $\delta=0$.

$$1111
\begin{array}{rrrrr|rr}
\text{point} & \shortstack{ \text{Shift from}\\ (-2k/3,k/3,k/3)} & \ell & \sk & \text{Coset} & \delta=0  &\delta=1  \\
\hline
w  & (1/3, -1/3, 0) & k-1 & 2k-2 & 1 & I & \\
x  & (0, -1/3, 1/3) & k &2k-1 & 2 & I & I \\ 
y & (-1/3, 0, 1/3) & k &2k & 1 & II & I \\
z & (-2/3, 0, 2/3) & k+1 & 2k+1 & 2 &   & II 
\end{array}
$$

From the table and the description of the rays of Chamber I and II at this point, we see that the contribution of $x_1$ to the Brion decomposition of $\Cat_{3,3k+1+\delta}(q,t)$ is 
$$\frac{q^{k-1+\delta}t^{k+2}+q^kt^{k+1}}{(1-q^{-1}t^2)(1-q)}+\frac{q^{k+\delta}t^k}{(1-q^2t^{-1})(1-q)}$$
which algebraic manipulation shows is equal to
$$\frac{q^kt^k(q+t+q^\delta t^\delta)}{(1-q^{-1}t^2)(1-q^2t^{-1})}$$
the middle term in Proposition \ref{prop:qt3}.

At $x_2=(-b/9, -b/9, 2b/9)$, the rays pointing in to Chamber I are $-v_2$ and $v_1-v_2$, which have $(q,t)$ weight $(-2, 2)$ and $(-1, 0)$, respectively here.  The rays pointing into Chamber II at $x_2$ are $v_1$ and $v_1-v_2$, which have $(q,t)$ weight $(1,-1)$ and $(-1,0)$ here.

$$
\begin{array}{rrrrr|rr}
\text{point} & \shortstack{ \text{Shift from}\\ (-k/3,-k/3,2k/3)} & \ell & \sk & \text{Coset} & \delta=0  &\delta=1  \\
\hline
w  & (0, 1/3, -1/3) & 2k-2 & 2k-2 & 1 & I & \\
x  & (-1/3, 1/3, 0) & 2k-1 &2k-1 & 2 & I & I \\ 
y & (-1/3, 0, 1/3) & 2k &2k & 1 & II & I \\
z & (-2/3, 0, 2/3) & 2k+1 & 2k+1 & 2 &   & II 
\end{array}
$$

From the table and the description of the rays of Chamber I and II at $x_2$, we see that the contribution of $x_2$ to the Brion decomposition $\Cat_{3, 3k+1+\delta}(q,t)$ is:
$$\frac{q^{2k-2+\delta}t^{k+2}+q^{2k-1+\delta}t^{k+1}}{(1-q^{-2}t^2)(1-q^{-1})}+\frac{q^{2k+\delta}t^k}{(1-qt^{-1})(1-q^{-1})}$$
which algebraic manipulation shows vanishes.

At $x_\infty$, there is only one orbifold coset, and the inward pointinting vectors are $-v_1$ and $-v_2$ which have $(q,t)$ weight $(-1,1)$ and $(-2,1)$ here.  Thus, the contribution of $x_\infty$ is 
$$\frac{q^{3k+\delta}}{(1-tq^{-1})(1-tq^{-2})}$$
the last term in Proposition \ref{prop:qt3}.

\end{proof}

That the contributions of some of the vertices vanish, leaving just four terms, and the specialization conjecture, suggest that perhaps that is a different parametrization of simultaneous cores as lattice points inside a rational simplex so that $\ell$ and $\sk$ become linear functions on the simplex.

\subsection{Low degree \texorpdfstring{$(q,t)$}{(q,t)}-symmetry}
In this section we show, for all $(a,b)$, that $(q,t)$-symmetry holds when the degree of one of the monomials are small.

More precisely, we show
\begin{corollary}[Low degree $(q,t)$-symmetry]
$$[t^k]\Cat_{a,b}(q,t)=[t^k]\Cat_{a,b}(t,q)$$
for $k$ sufficiently small (compared to $a$ and $b$).
\end{corollary}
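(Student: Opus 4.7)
The plan is to localize the sum defining $\Cat_{a,b}(q,t)$ near the unique lattice vertex of $\SC_a(b)$ where $\sk^\prime$ vanishes, and to match the resulting local expression, together with its counterpart near the origin of $\polyh$, via the $\siz$--$\maj$ generating function on $S_{a-1}$.

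First I would argue that for $k$ small enough (quantitatively, small compared to $b/a$), every $(a,b)$-core $\lambda$ with $\sk^\prime(\lambda)\le k$ lies in the chamber of the Catalan arrangement that contains $x_\infty=bs$: by Lemma \ref{lem:table}, each step $-v_i$ away from $x_\infty$ costs exactly one unit of $\sk^\prime$, while crossing into an adjacent chamber of the Catalan arrangement requires displacement of order $b/a$. In the quotient $\polyh$ this chamber meets only one orbifold coset, so the lattice points of $\polyh$ near $x_\infty$ are precisely $x_\infty-\sum n_iv_i$ with $n_i\in\N$. Substituting into Lemma \ref{lem:table} yields the local Brion contribution
$$q^{\ell_\infty}\prod_{i=1}^{a-1}\frac{1}{1-q^{-i}t},\qquad \ell_\infty=\tfrac{(a-1)(b-1)}{2},$$
whose $t^k$-coefficient agrees with $[t^k]\Cat_{a,b}(q,t)$ for $k$ in this range.

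Next I would carry out the parallel computation near the opposite vertex $0$ of $\polyh$, using the second row of Lemma \ref{lem:table} ($\Delta_i\ell=i$, $\Delta_i\sk^\prime=-i(a-i)$). This chamber meets $(a-1)!$ orbifold cosets, indexed by permutations $\sigma\in S_{a-1}$ obtained by quotienting the $S_a$-orbit of the shift $s$ by the cyclic $\Z_a$ action. I claim the nearest representative of each coset to $0$ has $(\ell,\sk^\prime)$-offset coinciding with $(\siz(\sigma),\ell_\infty-\maj(\sigma))$ after a direct calculation from Propositions \ref{prop:length} and \ref{prop:skewlength}. Summing the resulting Brion contributions over the $(a-1)!$ cosets and extracting $[q^k]\Cat_{a,b}(q,t)=[t^k]\Cat_{a,b}(t,q)$ produces an expression whose low-$q$ terms match the image under $q\leftrightarrow t$ of the low-$t$ terms of the first expression, once one invokes the Bright--Savage identity
$$\sum_{\sigma\in S_n}q^{\siz(\sigma)}t^{\maj(\sigma)}=\prod_{k=1}^n[k]_{q^{n+1-k}t}.$$

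The main obstacle is the combinatorial identification in the second paragraph: pinning down how the $(a-1)!$ orbifold cosets near $0$ are parameterized by $S_{a-1}$, and verifying that the length and co-skew-length of their nearest lattice representatives realize the quadratic statistics $\siz$ and $\maj$. Once this parameterization is established, matching the two local generating functions in low degree is a purely algebraic manipulation using the Bright--Savage product formula; the geometric input from Lemma \ref{lem:table} and the chamber structure of the Catalan arrangement takes care of the rest.
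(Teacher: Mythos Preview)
Your proposal is correct and follows essentially the same route as the paper: localize the Brion/Lawrence--Varchenko contributions at the two extreme vertices $x_\infty$ and $0$ of $\polyh$, use Lemma~\ref{lem:table} to read off the cone denominators, parameterize the $(a-1)!$ orbifold cosets near $0$ by $S_{a-1}$, and then match the two local generating functions via the Bright--Savage identity $\sum_{\sigma}q^{\siz(\sigma)}t^{\maj(\sigma)}=\prod_{k}[k]_{q^{n+1-k}t}$.

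One small correction to watch for when you carry out the identification you rightly flag as the main obstacle: in the paper's computation (Lemma~\ref{lem:sizmaj1}) the minimal representative $x_\sigma$ of the coset labeled by $\sigma$ satisfies $\ell(x_\sigma)=\maj(\sigma)$ and $\sk(x_\sigma)=\siz(\sigma)$, so the $(\ell,\sk^\prime)$-offset is $(\maj(\sigma),\ell_\infty-\siz(\sigma))$ rather than $(\siz(\sigma),\ell_\infty-\maj(\sigma))$ as you write. With that swap fixed, the numerator near $0$ becomes $\sum_\sigma t^{\maj(\sigma)}q^{\ell_\infty-\siz(\sigma)}$, and after the substitution the Bright--Savage product combines with the denominator $\prod_k(1-t^kq^{-k(a-k)})^{-1}$ to reproduce exactly the $x_\infty$ contribution with $q$ and $t$ interchanged.
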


\subsubsection{Contribution near $x_\infty$}
As we saw for $a=3$, near $x_\infty$ there is only one orbifold coset of the rotated lattice $\Lambda_R$.  We have seen that $\ell(x_\infty)=(a-1)(b-1)/2$, $\sk(x_\infty)=0$.  Reading off how adding multiples of $v_i$ changes $\ell$ and $\sk^\prime$ from Lemma \ref{lem:table}, we see that the low $t$-degree terms of $\Cat_{a,b}(q,t)$ are:

$$q^{(a-1)(b-1)/2}\prod_{k=1}^{a-1}\frac{1}{(1-tq^{-k})}$$

\subsubsection{Contribution near 0}
Let $\cone_0=\{x_0+\Lambda_R\}\cap\cone$ be the set of points in the intersection of $\cone$ and the orbifold coset of $\Lambda_R$ containing $x_0$.

From Lemma \ref{lem:table} and the values of $\ell, \sk^\prime$ on $x_0$, we have

$$\sum_{p\in\cone_0} q^{\sk^\prime(p)}t^{\ell(p)}=q^{(a-1)(b-1)/2}\prod_{k=1}^{a-1} \frac{1}{(1-t^kq^{-k(a-k)})}$$

To figure out the entire contribution to $\Cat_{a,b}(t,q)$, we must figure out the contribution from the other $(a-1)!$ orbifold cosets of $\Lambda_R$ near $x_0$.  

Since $\cone$ is integral at $0$ with respect to $\Lambda_R$, each orbifold coset $\gamma$ of $\Lambda_R$  had a unique minimal representative $x_\gamma$, so that the points in $\gamma\cap\cone$ str $x_\gamma+\left(\Lambda_R\cap \cone\right)$,  and the contribution near $0$ of the points in $\gamma$ is 
$$q^{\sk^\prime(x_\gamma)}t^{\ell(x_\gamma)}\prod_{k=1}^{a-1}\frac{1}{1-t^kq^{-k(a-k)}}$$

Thus, low degree symmetry follows from
\begin{proposition} \label{prop:orbcosets}
$$\sum_{\gamma\in\OC} q^{\sk^\prime(x_\gamma)}t^{\ell(x_\gamma)}=\prod_{k=1}^{a-1}[k]_{q^{-(a-k)}t}$$ 

\end{proposition}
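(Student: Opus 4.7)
My plan is to reduce the identity to the Bright--Savage joint distribution formula by matching the $(a-1)!$ orbifold cosets with permutations $\sigma\in S_{a-1}$ so that the length statistic $\ell$ corresponds to the major index $\maj$ and the skew length $\sk$ corresponds to the quadratic statistic $\siz$.

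First I would make the bijection $\OC\leftrightarrow S_{a-1}$ explicit. Any point of $\Lambda_{\mathcal{O}}$ has coordinates whose fractional parts form a permutation of the $a$ distinct values $\{s_j\bmod 1\}$; an element of $\Lambda_R$ shifts all coordinates by a common real number, which cycles through these $a$ values. Thus cosets are indexed by orderings of the fractional parts modulo cyclic shift of the underlying labels, and picking the canonical representative $\sigma\in S_a$ with $\sigma(0)=0$ gives a natural bijection with $S_{a-1}$. For each $\sigma$ the minimum representative $x_\gamma\in\dominant$ can be written down explicitly: strict monotonicity combined with the prescribed fractional-part pattern forces consecutive integer parts to agree at an ascent of $\sigma$ and to jump by $1$ at a descent, and the trace-zero condition then pins down $x_\gamma$ as a specific perturbation of the trivial-coset basepoint $s$.

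Next I would compute the two statistics on this explicit $x_\gamma$. From Proposition~\ref{prop:length}, $\ell(x_\gamma)=-(a-1)/2+a\max_i x_{\gamma,i}$; writing the maximum coordinate in terms of $\sigma(a-1)$ and the total number of descent jumps gives $\ell(x_\gamma)=\maj(\sigma)$ after a short bookkeeping. For the skew length I would apply Proposition~\ref{prop:skewlength}: because $x_\gamma$ is close to $s$, the terms $\lfloor x_i-x_j-b/a\rfloor_0$ all vanish, and only the double sum $\sum_{i,j}\lfloor x_{\gamma,i}-x_{\gamma,j}\rfloor_0$ contributes. Reorganizing this sum by descent positions of $\sigma$ should produce precisely the defining formula for $\siz(\sigma)$. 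Using $\sk^\prime=(a-1)(b-1)/2-\sk$ and $\sk^\prime(s)=(a-1)(b-1)/2$ (so that, measured relative to the basepoint $s$, the summand of the LHS becomes $q^{-\sk(x_\gamma)}t^{\ell(x_\gamma)}$), the proposition reduces to
$$\sum_{\sigma\in S_{a-1}} q^{-\siz(\sigma)}\,t^{\maj(\sigma)} = \prod_{k=1}^{a-1}[k]_{q^{-(a-k)}t},$$
which is the Bright--Savage identity with $q\mapsto q^{-1}$.

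The principal obstacle will be verifying $\sk(x_\gamma)=\siz(\sigma)$, since this is where a geometric quadratic form arising from floor-function differences must be matched to a specific combinatorial quadratic statistic; the matching requires carefully reorganizing the double sum over coordinate pairs, with each pair contributing weight determined by whether and how it straddles the descent jumps of $\sigma$. By contrast, $\ell(x_\gamma)=\maj(\sigma)$ should fall out essentially immediately from the descent-jump description of the integer parts together with $\maj(\sigma)=\sum_i i\cdot[\sigma\text{ has a descent at }i]$.
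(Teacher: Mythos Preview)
Your proposal is correct and follows essentially the same route as the paper: biject $\OC$ with $S_{a-1}$ via the fractional parts of the coordinates, write down the minimal representative $x_\gamma$ explicitly from the descent structure of $\sigma$, verify $\ell(x_\gamma)=\maj(\sigma)$ and $\sk(x_\gamma)=\siz(\sigma)$ (this is the paper's Lemma~\ref{lem:sizmaj1}, and you correctly flag the second equality as the step needing real work), and reduce to the joint $(\siz,\maj)$ distribution. The only difference is that the paper also supplies a self-contained proof of that distribution identity via a ``left-decreasing factorization code'' on $S_{a-1}$ (Lemma~\ref{lem:sizmaj2}), though it explicitly notes the equivalence with the Bright--Savage result you invoke directly.
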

  
\subsubsection{Proof of Proposition \ref{prop:orbcosets}}

We break the proof of Proposition \ref{prop:orbcosets} into two lemmas.  The first establishes a bijection between the orbifold cosets $\gamma$ and permutations in $S_{a-1}$, and identifies permutation statistics that correspond to $\ell$ and $\sk$ under this bijection.  The second lemma shows that these permutation statistics have the proper distribution.  Before stating these lemmas, we introduce these permutation statistics.

\subsubsection{Permutation Statitistics}
The permutation statistics we need are defined in terms of descents and inversions.
\begin{definition} \label{def:standardpermutations}
For $\sigma\in S_n$, let 
$$\DES(\sigma)=\left\{i\in [1, n-1] \Big | \sigma(i)>\sigma(i+1)\right\}$$
We use $\des(\sigma)$ to denote $|\DES(\sigma)|$, and 
$$\maj(\sigma)=\sum_{i\in\INV(\sigma)} i$$

Recall that 
$$\inv(\sigma)=\left|\left\{(i,j)\big| 1\leq i<j\leq n, \sigma(i)>\sigma(j)\right\}\right|$$
\end{definition}

The lesser known statistic we need is the $\emph{size}$ of $\sigma$, written $\siz(\sigma)$: 

\begin{definition} \label{def:sizepermutation}
$$\siz(\sigma)=\left(\sum_{i\in\DES(\sigma)} (n+1-i)i\right)-\inv(\sigma)$$
\end{definition}

Our motivation for the definition of $\siz$ are the following two lemmas, which together immediately prove Proposition \ref{prop:orbcosets}

\begin{lemma} \label{lem:sizmaj1}
There is a labeling of the orbifold cosets by permutations $\sigma\in S_{a-1}$, so that if $v_\sigma$ be the minimum vector in the coset labeled by $\sigma$, then:
\begin{align*}
\ell(v_\sigma)&=\maj(\sigma) \\
\sk(v_\sigma)&=\siz(\sigma)\\
\end{align*}

\end{lemma}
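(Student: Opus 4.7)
The plan is to exhibit the bijection $S_{a-1} \to \OC$ explicitly in coordinates and then compute $\ell(v_\sigma)$ and $\sk(v_\sigma)$ in closed form. First I would set up the bijection: the $(a-1)!$ orbifold cosets correspond to $S_a/\Z_a$, with canonical representatives given by permutations $\sigma \in S_{a-1}$ (extended to $S_a$ by $\sigma(0)=0$). For each such $\sigma$, the coset $\sigma(s)+\Lambda_R$ meets $\cone$ in a set of the form $v_\sigma+\sum_{i=1}^{a-1}\Z_{\ge 0}\,v_i$, and the unique minimal representative $v_\sigma$ is characterized by $v_\sigma\in\cone$ and $v_\sigma-v_i\notin\cone$ for each generator $v_i$. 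Existence and uniqueness follow from the fact that $\cone$ is unimodular with respect to $\Lambda_R$ at the origin.

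Second, I would give an explicit parametrization $v_\sigma = s + \mathbf{m}_\sigma/a$, where $\mathbf{m}_\sigma=(m_0,\ldots,m_{a-1})$ is an integer vector with $\sum m_i = 0$. Since $v_\sigma\in\dominant$ and $s$ already has constant gaps $1/a$, the condition $(v_\sigma)_i < (v_\sigma)_{i+1}$ translates to $m_i \le m_{i+1}$: the shifts $\mathbf{m}_\sigma$ are weakly increasing. I would describe $\mathbf{m}_\sigma$ combinatorially by a cumulative-descent procedure that reads off the descents of $\sigma$ and produces an integer sequence whose last entry is $m_{a-1}=\maj(\sigma)$; this is essentially the encoding underlying the lecture hall bijection of Bright--Savage \cite{BS}.

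Third, given the explicit form of $v_\sigma$, the computation of $\ell(v_\sigma)$ is immediate from Proposition \ref{prop:length}:
\[
\ell(v_\sigma) = -\frac{a-1}{2} + a\max_i(v_\sigma)_i = -\frac{a-1}{2} + a\bigl(s_{a-1} + m_{a-1}/a\bigr) = m_{a-1} = \maj(\sigma).
\]
For $\sk(v_\sigma)$ I would apply Proposition \ref{prop:skewlength}, taking $b$ sufficiently large that the subtracted sum vanishes near the origin, leaving
\[
\sk(v_\sigma) = \sum_{i<j}\left\lfloor\frac{(j-i)+(m_j-m_i)}{a}\right\rfloor.
\]
A careful count of which pairs $(i,j)$ contribute, grouping by how $j-i$ interacts with the descents of $\sigma$ encoded in $\mathbf{m}_\sigma$, matches this sum to $\sum_{i\in\DES(\sigma)}(a-i)i - \inv(\sigma) = \siz(\sigma)$.

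The main obstacle is pinning down the precise combinatorial description of $\mathbf{m}_\sigma$ and verifying the floor-function identity in the $\sk$ computation. Note in particular that the naive bijection sending $\sigma$ to the orbifold coset of $\sigma(s)$ does \emph{not} give the correct matching $(\ell,\sk) \leftrightarrow (\maj,\siz)$, so some care is needed in choosing the labeling. The floor-function identity is essentially the content of the Bright--Savage lecture-hall bijection, so the cleanest route is to translate their bijection into the current geometric setting once the coordinates of $v_\sigma$ are in place.
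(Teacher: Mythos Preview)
Your plan matches the paper's argument closely: set up the bijection between orbifold cosets and $S_{a-1}$, write down the minimal representative explicitly, and evaluate $\ell$ and $\sk$ on it via Propositions~\ref{prop:length} and~\ref{prop:skewlength}. The paper makes the labeling canonical by reading off fractional parts, $\sigma^w_i/a = \langle w_i - w_a\rangle$, and then builds the minimal vector as $x^\sigma_j = \sigma_j/a + \des_{1,j}(\sigma) - (\text{const})$; after reindexing this is exactly your $s + \mathbf{m}_\sigma/a$ with $m_j = \sigma_j - j + a\,\des_{1,j}(\sigma) - \sum_i (a-i)\des_i(\sigma)$, from which $m_{a-1} = \maj(\sigma)$ and the weak monotonicity you note both follow.

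One correction on where the work lies: the floor-sum identity for $\sk(v_\sigma)$ does \emph{not} need Bright--Savage. With the parametrization above one has $\lfloor x^\sigma_j - x^\sigma_i\rfloor = \des_{ij}(\sigma) - [\sigma_j < \sigma_i]$, and summing over $i<j$ gives $\sum_k k(a-k)\des_k(\sigma) - \inv(\sigma) = \siz(\sigma)$ directly, in two lines. The Bright--Savage input is only relevant to the companion Lemma~\ref{lem:sizmaj2} about the joint distribution of $(\maj,\siz)$ over all of $S_{a-1}$, not to the pointwise evaluation here; keeping these two lemmas separate will make your write-up cleaner.
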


\begin{lemma} \label{lem:sizmaj2}
$$\sum_{\sigma\in S_n} q^{\siz(\sigma)} t^{\maj(\sigma)}=\prod_{k=1}^n [k]_{q^{n+1-k}t}$$
\end{lemma}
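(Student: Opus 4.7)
The plan is to prove the identity by induction on $n$, via the recursion
\[
f_n(q,t) := \sum_{\sigma\in S_n} q^{\siz(\sigma)} t^{\maj(\sigma)} = [n]_{qt}\cdot f_{n-1}(q,qt).
\]
With base case $f_1=1$ and the inductive hypothesis $f_{n-1}(q,s) = \prod_{k=1}^{n-1}[k]_{q^{n-k}s}$, substituting $s=qt$ gives $f_{n-1}(q,qt) = \prod_{k=1}^{n-1}[k]_{q^{n+1-k}t}$, and multiplying by $[n]_{qt}=[n]_{q^{n+1-n}t}$ yields the claimed product.

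To prove the recursion, I look for a bijection $S_n \leftrightarrow S_{n-1}\times\{0,1,\ldots,n-1\}$, $\sigma\leftrightarrow(\pi,s)$, satisfying $\maj(\sigma) = \maj(\pi)+s$ and $\siz(\sigma) = \siz_n(\pi)+s$, where $\siz_n(\pi) := \sum_{i\in\DES(\pi)}(n+1-i)i - \inv(\pi)$ extends $\siz$ to $S_{n-1}$ using the value $n$ rather than $n-1$. A short check gives $\siz_n(\pi) = \siz_{n-1}(\pi)+\maj(\pi)$, so $q^{\siz_n(\pi)}t^{\maj(\pi)}$ is exactly the weight of $\pi$ in $f_{n-1}(q,qt)$. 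Equivalently, the fiber over each $\pi$ should consist of $n$ permutations on which $\siz-\maj$ is constant, and whose $\maj$ values run through the arithmetic progression $\{\maj(\pi)+s\}_{s=0}^{n-1}$.

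The naive ``insert $n$ into $\pi$ at one of its $n$ slots'' map does \emph{not} give such a bijection: for $\pi=12\in S_2$, the three insertions produce $\siz$-differences $\{0,1,3\}$ rather than $\{0,1,2\}$, even though their $\maj$-differences do form $\{0,1,2\}$ by the classical Carlitz argument. Hence the correct bijection must mix across distinct preimages of the forget-$n$ map, and identifying its fibers explicitly is the main combinatorial obstacle. Concretely, one must group $S_n$ into $(n-1)!$ classes of size $n$ on which $\siz-\maj$ is constant, and match each class with a unique $\pi\in S_{n-1}$ via $\siz_n(\pi)-\maj(\pi)$.

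Bright and Savage \cite{BS} carry out exactly such a construction in their study of quadratic statistics on lecture hall partitions, realizing $S_n$ as a lattice-point set in a rational cone on which $\siz$ and $\maj$ both become linear; the generating function then factors directly into univariate $[k]_q$-series of the required form. The cleanest proof route is therefore to cite their identity, after a brief check that my normalizations of $\siz$ and $\maj$ agree with theirs. An alternative is a purely algebraic derivation, computing $\sum_{\pi\in S_{n-1}}\sum_{j=1}^n q^{\siz(\sigma_j)}t^{\maj(\sigma_j)}$ by a case analysis on $j$ relative to $\DES(\pi)$ and rearranging the sum, but the lecture-hall framework of \cite{BS} is more conceptual and better matches the lattice-point spirit of the rest of this paper.
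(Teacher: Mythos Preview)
Your proposal ultimately reduces to ``cite \cite{BS}'', which is valid---indeed the paper itself remarks that substituting $q\mapsto q^{-1}$ and $t\mapsto tq^{n+1}$ into Bright--Savage's Theorem~4.4 yields the lemma. But as a self-contained argument your write-up stops short: you identify the correct recursion $f_n(q,t)=[n]_{qt}\,f_{n-1}(q,qt)$, observe that insertion of $n$ does not realize it, and then defer to \cite{BS} without carrying out the normalization check.

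The paper's own proof supplies precisely the bijection you were unable to find. It uses the \emph{left-decreasing factorization code}: write each $\sigma\in S_n$ uniquely as
\[
\sigma=(C_n^-)^{a_n}(C_{n-1}^-)^{a_{n-1}}\cdots (C_2^-)^{a_2},\qquad 0\le a_k<k,
\]
where $C_k^-=(k,k-1,\dots,1)$. The key computation is that left-multiplication by $C_k^-$ replaces the value $1$ (say in position $j$) by $k$ and decrements the values $2,\dots,k$; this shifts one descent from $j-1$ to $j$ (or creates a descent at position $1$), so $\maj$ increases by $1$, and a short inversion count shows $\siz$ increases by $n+1-k$. Hence $\maj(\sigma)=\sum_k a_k$ and $\siz(\sigma)=\sum_k (n+1-k)a_k$, from which the product formula is immediate. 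Peeling off the factor $(C_n^-)^{a_n}$ gives exactly your desired map $S_n\to S_{n-1}\times\{0,\dots,n-1\}$, with $s=a_n=n-\sigma(n)$ and $\pi=(C_n^-)^{-a_n}\sigma$; both $\maj$ and $\siz_n$ increase by $a_n$ under this step, matching your recursion. So the ``main combinatorial obstacle'' you flag is resolved not by inserting $n$ into a slot, but by cyclically shifting values so that $n$ lands in the last position.
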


\begin{remark}
The name \emph{size} was chosen in reference to the size of a partition: by Lemma \ref{lem:sizmaj2}, for fixed $k$ and $\ell$, as $n$ grows large the number of permutations $\sigma\in S_n$ with $\maj(\sigma)=\ell$ and $\siz(\sigma)=k$ stabilizes to the number of partitions with length $\ell$ and size $k$.
\end{remark}

Note that $\siz$ is quadratic in the descent positions.  Such statistics have been considered by Bright and Savage in \cite{BS}.  In particular, they introduce the statistic 
$$\sqin(\sigma)=\inv(\sigma)+\sum_{i\in\DES(\sigma)} i^2$$
and prove in Theorem 4.4 that
$$\sum_{\sigma\in S_n} q^\sqin(\sigma)t^{\maj(\sigma)}=\prod_{k=1}^n [k]_{tq^k}$$
Substituting $q^{-1}$ for $q$ and $tq^{n+1}$ for $t$ in their Theorem 4.4 gives exactly our Lemma \ref{lem:sizmaj2}.

\subsection{Proof of Lemma \ref{lem:sizmaj1}}
\subsubsection{Bijection between $S_{a-1}$ and orbifold cosets}
First, we determine a bijection between orbifold cosets and $S_{a-1}$.

Let $w\in\Lambda\cap \cone$, and define $\sigma^w$ by 
$$\frac{\sigma^w_i}{a}=\langle w_{i}-w_a\rangle$$
As $w\in S_a\Lambda_C$, we see $\sigma^w$ is a permutation in $S_{a-1}$.

Since the entires of the $v_i$ all have the same entries modulo $1$, we see that $\sigma^{w+v_i}=\sigma^w$; that is, $\sigma^w$ is constant on the orbifold cosets.  

It is not hard to see that this map is surjective, and hence a bijection between orbifold cosets and $S_{a-1}$.

\subsubsection{Smallest vector in each coset}
We now describe the minimal element $x^\sigma$ in the orbifold coset corresponding to $\sigma$.  

Being the minimal vector $x^\sigma$ in a coset means that $x^\sigma-v_i\notin\dominant$ for all $i$, which is equivalent to
$$x^\sigma_i+1>x^\sigma_{i+1},\quad 1\leq i\leq a-1$$

To find $x^\sigma$ we first define a vector $w^\sigma$ satisfying
$$w^\sigma_i<w^\sigma_{i+1}<w^\sigma_i+1$$
$$\left\langle w_i-w_a\right\rangle=\frac{\sigma_i}{a}$$
but does not satisfy $\sum w_i^\sigma=0$, we then subtract the approproiate multiple of $(1/a,\dots, 1/a)$ to get $v^\sigma$.

We need $w^\sigma_{i+1}>w^\sigma_i$ and $\langle w^\sigma_{i+1}-w^\sigma_i\rangle=\langle\sigma_{i+1}/a-\sigma_i/a\rangle$, and so we set
$$w^\sigma_{i+1}=w^\sigma_i+\frac{\sigma_{i+1}-\sigma_i}{a}+\des_i(\sigma)$$
where we have conventionally set $w^\sigma_0=\sigma_0=0, \sigma_a=a$.

Then
$$x_i^\sigma=w_i^\sigma-\frac{1}{a}\sum_{j=1}^a w^\sigma_j$$
is the minimal vector in the orbifold coset labeled by $\sigma$.
\subsubsection{Simplification}
 To find $\ell(x^\sigma)$ and $\sk(x^\sigma)$, we simplify our expression for $x_i^\sigma$.  The following definition helps.
\begin{definition}
For $i<j$, define $\des_{ij}$ to be the number of descents between $i$ and $j$.  That is:
$$\des_{ij}(\sigma)=\left|\left\langle k\in\DES(\sigma)\big| i\leq k < j\right\}\right|=\sum_{k=i}^{j-1}\des_k(\sigma)$$
\end{definition}

With this definition,
$$w_j=\frac{\sigma_j}{a}+\des_{1,j}(\sigma)$$
and so
\begin{align*}
\sum_{j=1}^a w_j &=\frac{1}{a}\sum_{i=1}^a \sigma_i +\sum_{i=1}^a \des_{1,i} \\
 &=\frac{a+1}{2}+\sum_{i=1}^{a-2} (a-i)\des_i(\sigma)
\end{align*}

Thus,
$$x^\sigma_j=\frac{\sigma_j}{a}+\des_{1j}(\sigma)-\frac{a+1}{2a}-\frac{1}{a}\sum_{i=1}^{a-2} (a-i)\des_i(\sigma)$$

\subsubsection{Length of $x^\sigma$}
We compute (recalling the convention $\sigma_a=a$):

\begin{align*}
\ell(x^\sigma) &= a x^\sigma_a-\frac{a-1}{2} \\
&=a+a\sum_{i=1}^{a-2} \des_i(\sigma)-\frac{a+1}{2}-\sum_{i=1}^{a-2} (a-i)\des_i(\sigma)-\frac{a-1}{2} \\
&=\sum_{i=1}^{a-2} i\des_i(\sigma) \\
&=\maj(\sigma)
\end{align*}

\subsubsection{Skew length of $x^\sigma$}
We have
\begin{align*}
\sk (x^\sigma) &=\sum_{1\leq i <j \leq a} \left\langle v^\sigma_j-v^\sigma_i\right\rangle \\
&=\sum_{1\leq i <j \leq a} \left\langle\frac{\sigma_j-\sigma_i}{a}+\des_{ij}(\sigma)\right\rangle \\
&=\sum_{1\leq i<j\leq a} \des_{ij}(\sigma)-\delta(\sigma_j<\sigma_i)
\end{align*}
Observe
$$\sum_{1\leq i<j\leq a} \delta(\sigma_j<\sigma_i)=\inv(\sigma).$$  
and
$$\sum_{1\leq i<j\leq a} \des_{ij}(\sigma)=\sum_{k=1}^{a-2} k(a-k)\des_k(\sigma)$$
since for $\des_k$ to appear in $\des_{ij}$ we need $1\leq i\leq k$ and $j<k\leq a$, and so $\des_k$ appears in $k(a-k)$ different $\des_{ij}$.

Thus, we have shown
$$\sk(x^\sigma)=\sum_{k=1}^{a-2} k(a-k)\des_k(\sigma)-\inv(\sigma)=\siz(\sigma)$$

\qed

\subsection{Proof of Lemma \ref{lem:sizmaj2}}

Before we prove Lemma \ref{lem:sizmaj2}, we introduce a family of codes for permuations that we call \emph{factorization codes}; our proof uses a specific factorization code we call the left-decreasing factorization code.

\begin{definition}
A \emph{valid sequence of length $n$} is a sequence of integers $a_i, 1\leq i\leq n $ such that $0\leq a_i<i$.  Let $\VS_n$ denote the set of valid sequences; clearly $|\VS|=n!$.

A \emph{permutation code} is a bijection $\phi:\VS_n\to S_n$.
\end{definition}

In section \ref{sec:factorization} we introduce a family of permutation codes we call \emph{factorization codes}; in particular, this family includes the \emph{left-decreasing factorization code} $\LD$.  

Lemma \ref{lem:sizmaj2} then reduces to showing:
\begin{lemma} \label{lem:LDweights}
For a valid sequence $a\in \VS_n$, we have:
\begin{align*}
\maj(LD(a))&=\sum a_i \\
\siz(LD(a))&=\sum (n+1-i) a_i
\end{align*}
\end{lemma}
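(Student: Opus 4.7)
My plan is to prove Lemma \ref{lem:LDweights} by induction on $n$, leveraging the recursive structure of the left-decreasing factorization code $\LD$ defined in Section \ref{sec:factorization}. Fix a valid sequence $(a_1, \dots, a_n) \in \VS_n$ and write $\tau = \LD(a_1, \dots, a_{n-1}) \in S_{n-1}$ and $\sigma = \LD(a_1, \dots, a_n) \in S_n$. The defining feature of a factorization code is that $\sigma$ is obtained from $\tau$ by one explicit operation controlled by $a_n$, so the entire proof reduces to understanding how that single operation affects $\maj$ and $\siz$.

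By the inductive hypothesis, $\maj(\tau) = \sum_{i<n} a_i$ and (writing $\siz_m$ for the version of $\siz$ living on $S_m$) $\siz_{n-1}(\tau) = \sum_{i<n}(n-i)a_i$. The elementary identity
\[
\sum_{i=1}^n (n{+}1{-}i)\, a_i \;-\; \sum_{i<n}(n-i)\, a_i \;=\; \sum_{i<n} a_i + a_n \;=\; \maj(\tau) + a_n
\]
reduces the inductive step to the two increment identities
\[
\maj(\sigma) - \maj(\tau) = a_n, \qquad \siz_n(\sigma) - \siz_{n-1}(\tau) = \maj(\tau) + a_n.
\]
The first says the $\LD$ operation introduces exactly one new descent whose (possibly shifted) position contributes $a_n$ to the major index. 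The second decomposes naturally into an ``automatic'' $\maj(\tau)$ term---each old descent at position $i$ now carries weight $(n{+}1{-}i)i$ instead of $(n-i)i$, i.e., an extra $i$ per descent---plus a residual $a_n$ coming from the interplay between the quadratic weight $(n{+}1{-}p)p$ of the new descent at position $p$ and the change in the inversion count $\inv(\sigma)-\inv(\tau)$.

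The main obstacle will be the case analysis verifying that the specifically \emph{left-decreasing} insertion rule is what makes this telescoping work. For a naive insertion of $n$ into $\tau$ at position $p$, the change in $\maj$ depends in a complicated way on the relationship between $p$ and the existing descent set of $\tau$ (descents above $p$ shift up by one, and an existing descent at $p-1$ gets absorbed into the new one at $p$), and the quantity $(n{+}1{-}p)p - (\inv(\sigma)-\inv(\tau))$ is likewise messy in general. The content of the proof is that the left-decreasing convention is designed precisely so the potential obstructions cancel, reducing both increment identities to direct computations driven by the recursive definition of $\LD$; with those in hand the induction closes immediately.
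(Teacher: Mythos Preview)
Your plan is workable but organized differently from the paper, and your description of the recursive step is not quite right. The paper does \emph{not} induct on $n$: it fixes $n$ throughout, starts from the identity of $S_n$, and tracks the effect of each single left-multiplication by $C_k^-$ as $k$ runs from $2$ to $n$. At the moment $C_k^-$ is applied the current permutation still fixes $k{+}1,\dots,n$, so all descents lie in $\{1,\dots,k{-}1\}$; the paper then shows that one application of $C_k^-$ either shifts a single existing descent from position $j{-}1$ to $j$ (when the value $1$ sits at position $j>1$) or creates a new descent at position $1$ (when $j=1$), and separately computes the change in $\inv$. In either case $\maj$ increases by exactly $1$ and $\siz_n$ by exactly $n{+}1{-}k$, so summing over all $a_k$ applications of $C_k^-$ gives the lemma directly. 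No comparison between $\siz_{n-1}$ and $\siz_n$ ever enters.

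Your induction on $n$ would reach the same destination but with two extra pieces of bookkeeping: the embedding step $\siz_n(\tau')-\siz_{n-1}(\tau)=\maj(\tau)$ (which you identify correctly), and the analysis of the block $(C_n^-)^{a_n}$ acting on $\tau'$. Be careful with the latter: the passage $\tau'\mapsto (C_n^-)^{a_n}\tau'$ is \emph{not} ``insertion of $n$ at position $p$.'' Left multiplication acts on values, so $(C_n^-)^{a_n}$ cyclically relabels every entry by $v\mapsto v-a_n\pmod n$; in particular it need not create ``exactly one new descent,'' and several descents can move simultaneously. If you unpack this block you will end up redoing the paper's single-step computation (the case $k=n$) $a_n$ times, at which point the outer induction on $n$ is superfluous. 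So your increment identities are the right targets, but the cleanest route to them is the paper's: drop the induction on $n$ and analyze one $C_k^-$ at a time.
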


\subsubsection{Factorization codes} \label{sec:factorization}

Factorization codes rest on the following simple observation.  Let $C_k\in S_k$ be any $k$-cycle.  Then $\{C_k^i\}, 0\leq i < k$ form a family of representatives for the (left or right) cosets of $S_{k-1}\subset S_k$.   

\begin{definition}
A \emph{family $C$ of $k$-cycles} is a sequence $C_k, k\in \N$, with $C_k\in S_K$ a $k$-cycle.

The \emph{right factorization} code associated to a family of $k$-cycles $C_k$ is the sequence of maps $R^C_n:\VS_n\to S_n$ defined by
$$R_n(a)=\alpha_k=C_2^{a_2}C_3^{a_3}\cdots C_n^{a_n}$$

Similarly, the \emph{left factorization} code associated to a family of $k$-cycles $C_k$ is the the sequence of maps $L^C_n:\VS_n\to S_n$ defined by
$$L^C_n(a)=C_n^{a_n}C_{n-1}^{a_{n-1}}\cdots C_2^{a_2}$$
\end{definition}

That the left and right factorization codes are in fact permutation codes follows easily from the observation using induction on $n$.

There are two ``obvious'' families of $k$-cycles:  \emph{increasing} cycles $C^+_k=(1,2,3,\dots,k)$, and the \emph{decreasing} cycles $C^-_k=(k,k-1,k-2,\dots,1)$.  

Thus, the left-decreasing factorization code $L^-_n$ is the bijection that sends $0\leq a_i<i$ to to 
$$L^-_n(a)=(C_n^-)^{a_n}(C_{n-1}^-)^{a_{n-1}}\dots (C_2^-)^{a_2}$$

\subsubsection{Multiplication by $C_k^-$} \label{sec:factorizationinductuction}

We now inductively prove Lemma \ref{lem:LDweights} giving $\maj$ and $\siz$ of a permutation in terms of its left decreasing factorization code.

Clearly Lemma \ref{lem:LDweights} holds on the identity permutation, where all $a_i=0$.   Thus we must show that in such a factorization, multiplying by $C^-_k$ raises $\maj$ by one and $\siz$ by $(n+1-k)$.

To do this, we must determine what multiplication by $C_k^-$ does to the set $\DES$ of descents.  When multiplying by $C^-_k$, we have not yet permuted the elements $(k+1), (k+2),\dots, n$, and so $\DES\subset \{1,\dots, k-1\}$. As $C_k$ decreases $2,\dots, j$ by 1, any comparisons involving two of these elements remains unchanged; hence, the only descents multiplying by $C^-_k$ could change are those involving 1, which it changes to $k$.

Suppose that in the one-line notation of $\sigma$ the 1 is in position $j$; then $j-1$ is a descent (unless $j=1$), and $j$ is not a descent.  After we multiply by $c_k$, the 1 changes to a $k$, and so now $j-1$ is not a descent,but $j$ is.

Thus, multiplying by $C_j$ either increases a descent by one, or creates a new descent at 1.  In either case, the major index increases by one.

We now investigate the effect of multipication by $C_k$ on $\siz$, supposing that 1 is in position $j$.  We first determine the change in the first term in $\siz$ (the sum over descents), and then determine the change this makes to the second term $\inv$.

A descent at $j-1$ contributes $$(n+1-(j-1))(j-1)=nj-j^2+3j-2$$ to $\siz$; a descent at $j$ contributes $$(n+1-j)j=nj-j^2+j$$
and thus multiplying by $C_k^-$ when $1$ is in position $j<k$ increases the first term of $\siz$ by $2-2j$.

We now turn to the inversions.  It is clear that the only inversions that change are those that were comparing $1$.  Before we multiply by $C_k^-$, $1$ is in position $j$, and the $j-1$ pairs $(i,j), 1\leq i \leq j-1$ are inversions, and none of the $k-j$ pairs $(j,\ell), j+1\leq \ell \leq k$ are inversions.  Multiplying by $C_k^-$, changes position $j$ to $k$.  Now none of the pairs $(i,j)$ are inversions, and all of the pairs $(j,\ell)$ are inversions.  Thus, $\inv$ increases by $k-2j+1$.  

Multiplying by $C_k^-$ when $1$ is in position $j<k$ will change $\siz$ by $$n-2j+2-(k-2j+1)=n-k+1$$ as desired.

\bibliographystyle{plain}
\bibliography{Armstrong}

\end{document}